\numberwithin{equation}{section}
\newtheorem{Theorem}{Theorem}[section]
\newtheorem*{Theorem*}{Basic problem RH$\boldsymbol{{}_0}$}
\newtheorem{Corollary}[Theorem]{Corollary}
\newtheorem{Lemma}[Theorem]{Lemma}
\theoremstyle{definition}
\newtheorem{Remark}[Theorem]{Remark} }
\newcommand{\tr}{\mathop{\mathrm{tr}}\nolimits}
\newcommand{\sgn}{\mathop{\mathrm{sgn}}}
\newcommand{\ii}{\mathrm{i}}
\newcommand{\dd}{\textup{d}}
\newcommand{\la}{\lambda}
\renewcommand{\Im}{\operatorname{Im}}
\renewcommand{\Re}{\operatorname{Re}}
\newcommand{\R}{{\mathbb R}}
\newcommand{\mC}{{\mathbb C}}
\begin{document}
\allowdisplaybreaks

\newcommand{\arXivNumber}{2212.04524}

\renewcommand{\PaperNumber}{096}

\FirstPageHeading

\ShortArticleName{Initial-Boundary Value Problem for the Maxwell--Bloch Equations}

\ArticleName{Initial-Boundary Value Problem\\ for the Maxwell--Bloch Equations with an Arbitrary\\ Inhomogeneous Broadening and Periodic Boundary\\ Function}

\Author{Maria FILIPKOVSKA~$^{\rm ab}$}

\AuthorNameForHeading{M.~Filipkovska}

\Address{$^{\rm a)}$~Friedrich-Alexander-Universit\"at Erlangen-N\"urnberg,\\
\hphantom{$^{\rm a)}$}~Cauerstra{\ss}e 11, 91058 Erlangen, Germany}
\EmailD{\href{mailto:maria.filipkovska@fau.de}{maria.filipkovska@fau.de}}

\Address{$^{\rm b)}$~B.~Verkin Institute for Low Temperature Physics and Engineering of NAS of Ukraine,\\
\hphantom{$^{\rm b)}$}~47 Nauky Ave., 61103 Kharkiv, Ukraine}
\EmailD{\href{mailto:filipkovskaya@ilt.kharkov.ua}{filipkovskaya@ilt.kharkov.ua}}

\ArticleDates{Received April 13, 2023, in final form November 17, 2023; Published online December 13, 2023}

\Abstract{The initial-boundary value problem (IBVP) for the Maxwell--Bloch equations with an arbitrary inhomogeneous broadening and periodic boundary condition is studied. This IBVP describes the propagation of an electromagnetic wave generated by periodic pumping in a resonant medium with distributed two-level atoms. We extended the inverse scattering transform method in the form of the matrix Riemann--Hilbert problem for solving the considered IBVP. Using the system of Ablowitz--Kaup--Newell--Segur equations equivalent to the system of the Maxwell--Bloch (MB) equations, we construct the associated matrix Riemann--Hilbert (RH) problem. Theorems on the existence, uniqueness and smoothness properties of a solution of the constructed RH problem are proved, and it is shown that a~solution of the considered IBVP is uniquely defined by the solution of the associated RH problem. It is proved that the RH problem provides the causality principle. The representation of a solution of the MB equations in terms of a solution of the associated RH problem are given. The significance of this method also lies in the fact that, having studied the asymptotic behavior of the constructed RH problem and equivalent ones, we can obtain formulas for the asymptotics of a solution of the corresponding IBVP for the MB equations.}

\Keywords{integrable nonlinear PDEs; Maxwell--Bloch equations; inverse scattering transform; Riemann--Hilbert problem; singular integral equation; inhomogeneous broadening; periodic boundary function}

\Classification{35F31; 35Q15; 37K15; 34L25; 35Q60}

 \section{Introduction}\label{Intro}

Consider the Maxwell--Bloch (MB) equations written in the form
\begin{equation}\label{MB1}
\mathcal E_t+\mathcal E_x =\Omega\int_{-\infty}^\infty n(s)\rho(t,x,s)\, \dd s, \qquad
\rho_t+2\ii\la\rho=\mathcal N \mathcal E,\qquad
{\mathcal N}_t =-\frac{1}{2}\big(\overline{\mathcal E}\rho+ \mathcal E \overline{\rho}\big),
\end{equation}
where $x\in (0,L)$, $L\le\infty$, $t\in\R_+=(0,+\infty)$, $\la\in\R$, ${\mathcal E}={\mathcal E}(t,x)$ and $\rho=\rho(t,x,\la)$ are complex valued functions, ${\mathcal N}={\mathcal N}(t,x,\la)$ is a real function, subscripts mean the partial derivatives in $t$ and $x$, and $\overline{\phantom{Z}}$ denotes complex conjugation. Hermitian conjugation will be denoted by~${}^*$. It follows from the second and third equations of the system~\eqref{MB1} that $\frac{\partial}{\partial t}\big({\mathcal N}^2(t,x,\la)+|\rho(t,x,\la)|^2\big)=0$. This relation gives the well-known normalization condition
 \begin{equation}\label{rhoN}
{\mathcal N}^2(t,x,\la)+|\rho(t,x,\la)|^2\equiv 1.
 \end{equation}
The system of Maxwell--Bloch (MB) equations is a system of integrable nonlinear PDEs which can be solved using the inverse scattering transform (IST) method. The MB equations were originally proposed by Lamb~\cite{Lamb67,Lamb71}. Ablowitz, Kaup and Newell~\cite{AKN} have studied the coherent pulse propagation through a resonant two level optical medium and shown that the Maxwell--Bloch equations describing this phenomenon can be solved by the inverse scattering method. Zakharov introduced in the paper~\cite{Zakh} the concepts of spontaneous and causal solutions of the MB equations that initiates the systematic application of the IST method to laser problems. This work was further evolved by Gabitov, Zakharov and Mikhailov~\cite{GZM83,GZM84,GZM85} which have used the Marchenko integral equations and obtained a new version of IST method. They presented a description of general solutions of the MB equations and gave their classification. Generally, there are many pioneering works relating to the Maxwell--Bloch equations and only a small part of them are cited here.
In this paper, we extend the IST method in the form of the matrix Riemann--Hilbert (RH) problem for solving the initial-boundary value problem (IBVP) for the MB equations with a periodic boundary condition. In~\cite{Fokas97,Fokas02}, Fokas proposed the unified transform method for solving IBVPs for linear and for integrable nonlinear PDEs, the main idea of which is the simultaneous spectral analysis of both Lax operators (whose compatibility condition is provided by the satisfaction of corresponding nonlinear PDEs) to construct a unifying transform for solving certain IBVPs. We use the similar idea, namely, to construct the associated matrix RH problem, we use compatible solutions of the systems of Ablowitz--Kaup--Newell--Segur (AKNS) equations (in general, we consider the three systems), which are equivalent to the MB equations in the sense that their compatibility conditions are provided by the satisfaction of the corresponding MB equations.
Notice that the compatibility condition of the AKNS system is the equivalent of the Lax representation, i.e., the compatibility condition of the Lax pair. Then we obtain the representation of a solution of the MB equations in terms of a solution of the associated RH problem and prove the theorems on the existence, uniqueness and smoothness properties of a solution of the constructed RH problem.

The MB equations are used to describe different physical phenomena, including self-induced transparency~\cite{AKN,Lamb71}, superfluorescence~\cite{GZM83,GZM85} and others related to the problems on the propagation of an electromagnetic wave in a medium with distributed two-level atoms~\cite{AS,Lamb71,Manakov}. Short reviews of the physical meaning of the MB equations and the application of the inverse scattering transform method can be found in~\cite{AKN,AS,BGKL19,GZM85,Kiselev,LiMiller}, and in~\cite{HuangL/ChenY, WWG} for the reduced MB equations. We consider the problem on the propagation of an electromagnetic wave in a~resonant medium with distributed two-level atoms. In~\eqref{MB1}, ${\mathcal E}(t,x)$ is the complex envelope of an electric field, ${\mathcal N}={\mathcal N}(t,x,\la)$ and $\rho=\rho(t,x,\la)$ are the entries of the density matrix of a~quantum two-level atom subsystem: $\left(\begin{smallmatrix} {\mathcal N}(t,x,\la) & \rho(t,x,\la) \\
\overline{\rho(t,x,\la)} & -{\mathcal N}(t,x,\la) \end{smallmatrix}\right)$, the parameter $\la$~is the deviation of the transition frequency of the given two-level atom from the mean frequency $\Omega$, and $L$~is the length of the attenuator.
The weight function $n(\la)$ ($n(\la)\ge 0$, $\la\in\R$) normalized by the condition $\int_{-\infty}^\infty n(\la)\, \dd\la=1$ describes the inhomogeneous broadening of the medium (the shape of the spectral line).
Consider general initial and boundary conditions for the mixed problem for the MB equations:
 \begin{align}
&{\mathcal E}(0,x)= {\mathcal E}_0(x),\quad \rho(0,x,\la)= \rho_0(x,\la),\quad {\mathcal N}(0,x,\la)= {\mathcal N}_0(x,\la),\quad 0<x<L\le\infty,\! \label{ic} \\
&{\mathcal E}(t,0)= {\mathcal E}_{\rm in}(t),\qquad 0<t<\infty, \label{bc}
 \end{align}
where ${\mathcal E}_0(x)$, ${\mathcal N}_0(x,\la)$, $\rho_0(x,\la)$ and ${\mathcal E}_{\rm in}(t)$ are infinitely differentiable for $x\in (0,L)$, $\la\in\R$ and~$t\in \R_+$.
Taking into account~\eqref{rhoN}, a function ${\mathcal N}_0(x,\la)$ is defined by $\rho_0(x,\la)$,
\[
{\mathcal N}_0(x,\la)= -\sqrt{1-|\rho_0(x,\la)|^2},\qquad 0<x<L\le\infty.
\]
Here we choose the positive branch of the square root. This means that we consider the problem for an attenuator (self-induced transparency).

In what follows, we put $\Omega=1$ in~\eqref{MB1} and consider the IBVP for the Maxwell--Bloch equations%
\begin{equation}\label{MB1a}
{\mathcal E}_t+{\mathcal E}_x =\int_{-\infty}^\infty n(s)\rho(t,x,s)\, \dd s, \qquad
\rho_t+2\ii\la\rho={\mathcal N}{\mathcal E},\qquad
{\mathcal N}_t =-\frac{1}{2}\big(\overline{{\mathcal E}} \rho+{\mathcal E} \overline{\rho}\big),
\end{equation}
where $x\in (0,L)$, $L\le\infty$, $t\in\R_+=(0,\infty)$ and $\la\in\R$ is a parameter, with the trivial initial conditions
\begin{gather}
{\mathcal E}(0,x)={\mathcal E}_0(x)=0,\qquad \rho(0,x,\la)=\rho_0(x,\la)=0, \nonumber\\
{\mathcal N}(0,x,\la)={\mathcal N}_0(x,\la)=-1,\qquad x\in (0,L),\label{MBini}
\end{gather}
and the periodic boundary function (input signal)
\begin{equation}\label{MBboundAe}
\mathcal E(t,0)=\mathcal E_{\rm in}(t)= A_0{\rm e}^{\ii\omega_0 t},\qquad t\in\R_+,
\end{equation}
where $A_0, \omega_0>0$. Given the form of boundary and initial data, it is natural to assume that the function $\mathcal E(t,x)$ is bounded in the vicinity of $(0,0)$ (as well as its partial derivatives with respect to $t$ and $x$).

The following notation will be used in the paper: $[A,B]$ is the commutator of the matrices~$A$ and~$B$, i.e., $[A,B]=AB-BA$; $I$ denotes the identity operator (matrix); the symbol ``$\rm{p.v.}$'' denotes the principal value of the Cauchy integral. Also, we will use the Pauli matrices
\[
\sigma_2=\begin{pmatrix} 0 & -\ii \\ \ii&\hphantom{-}0 \end{pmatrix},\qquad
\sigma_3=\begin{pmatrix} 1&\hphantom{-}0\\0&-1 \end{pmatrix}.
\]

It is known that the MB equations~\eqref{MB1a} are equivalent to the overdetermined system of linear differential equations (see, e.g.,~\cite{AS, GZM85})
 \begin{gather}
W_t=U(t,x,\la)W, \label{teq}\\
W_x=V(t,x,\la)W, \label{xeq} \end{gather}
where
 \begin{align}
& U(t,x,\la)=-\ii\la\sigma_3-H(t,x),\qquad
 H(t,x)=\frac{1}{2} \begin{pmatrix} 0 & \mathcal E(t,x) \label{U-MB} \\
 -\overline{\mathcal E(t,x)} & 0 \end{pmatrix}, \\
& V(t,x,\la)=\ii\la\sigma_3+H(t,x)-\ii G(t,x,\la), \label{V-MB} \\
& G(t,x,\la)= \frac{1}{4} {\rm p.v.}\int_{-\infty}^\infty \frac{F(t,x,s)n(s)}{s-\la}\, \dd s,\qquad
 F(t,x,s)= \begin{pmatrix}
 {\mathcal N}(t,x,s) &\rho(t,x,s)\\
 \overline{\rho(t,x,s)} &-{\mathcal N}(t,x,s) \end{pmatrix}. \nonumber 
\end{align}
The system~\eqref{teq},~\eqref{xeq} (where $U$, $V$ of the form~\eqref{U-MB} and~\eqref{V-MB}) is commonly referred to as the \emph{system of the Ablowitz--Kaup--Newell--Segur (AKNS) equations for the MB equations}.

Since the equation~\eqref{xeq} cannot be used to construct an appropriate associated RH problem, we introduce the $x^+$-equation and $x^-$-equation (the upper and lower bank equations) in addition to $x$-equation~\eqref{xeq}:
\begin{equation}\label{pmxeq}
W_x=V_\pm(t,x,\la) W,
\end{equation}
where
 \begin{equation}\label{V_pm-MB}
V_\pm(t,x,\la)=V(t,x,\la)\pm\frac{\pi}{4}F(t,x,\la)n(\la)= \ii\la\sigma_3+H(t,x)-\ii G_\pm(t,x,\la),
 \end{equation}
and the functions
\begin{align}
 G_\pm(t,x,\la)& =\frac{1}{4} \int_{-\infty}^\infty \frac{F(t,x,s) n(s)}{s-\la\mp\ii0}\, \dd s \nonumber\\
 & =\frac{1}{4}{\rm p.v.}\int_{-\infty}^\infty \frac{F(t,x,s)n(s)}{s-\la}\, \dd s\pm\frac{\pi\ii}{4}F(t,x,\la)n(\la)\label{G_pm}
\end{align}
are the limits of the function
 \begin{equation}\label{continuation_G}
G(t,x,z)=\frac{1}{4}\int_{-\infty}^\infty \frac{F(t,x,s)n(s)\, \dd s}{s-z},\qquad z=\la+\ii\nu,
 \end{equation}
as $z\to\la\pm\ii0$ (i.e., $G_\pm(t,x,\la)= \lim_{\varepsilon\to0,\,\varepsilon>0}G(t,x,\la\pm\ii\varepsilon)$) and have the form~\eqref{G_pm} due to the Sokhotski--Plemelj formulas.

In what follows, systems of matrix differential equations of the form~\eqref{teq},~\eqref{xeq} and of the form~\eqref{teq},~\eqref{pmxeq} will be called the \emph{AKNS systems}.

A system of the form~\eqref{teq} and~\eqref{xeq} is \emph{compatible} if ${W_{tx}\equiv W_{xt}}$ for any its solution $W(t,x,\la)$, i.e., the following condition holds:
 \begin{equation}\label{ZCC}
U_x-V_t+[U,V]=0.
 \end{equation}
Thus, the \emph{compatibility condition of the AKNS system}~\eqref{teq},~\eqref{xeq} has the form~\eqref{ZCC}. The \emph{compatibility conditions of the AKNS systems}~\eqref{teq},~\eqref{pmxeq} take the form
 \begin{gather}
U_x(t,x,\la)-(V_{\pm})_t(t,x,\la)+[U(t,x,\la),V_\pm(t,x,\la)]= U_x(t,x,\la)-V_t(t,x,\la) \nonumber \\
\qquad{} +[U(t,x,\la),V(t,x,\la)]\mp\frac{\pi n(\la)}{4} (F_t(t,x,\la)-[U(t,x,\la),F(t,x,\la)])=0,\label{ZCC_pm}
 \end{gather}
where $(V_{\pm})_t:=\partial V_{\pm}/\partial t$.

The $t$-equation~\eqref{teq} and the $x$-equation~\eqref{xeq} are compatible if and only if ${\mathcal E}(t,x)$, $\rho(t,x,\la)$ and ${\mathcal N}(t,x,\la)$ satisfy the MB equations~\eqref{MB1a}~\cite{GZM85}, which can be written in the matrix form (see, e.g.,~\cite{Kotlyarov13}):
 \begin{align} 
& H_t(t,x)+H_x(t,x)- \frac{1}{4}\int_{-\infty}^\infty [\sigma_3,F(t,x,s)]n(s)\, \dd s =0, \label{HFeq_1} \\
& F_t(t,x,\la)+[\ii\la\sigma_3+H(t,x),F(t,x,\la)]=0. \label{HFeq_2}
 \end{align}
In particular, the equation~\eqref{HFeq_2} is a matrix form of the second and third equations from~\eqref{MB1a}.
It is easy to prove that for any fixed $t$, $x$ the matrix function $F(t,x,\la)=\pm W(t,x,\la)\sigma_3W^{-1}(t,x,\la)$ (or $F(t,x,\la)=\pm W(t,x,\la)\sigma_3W^*(t,x,\la)$ where ${}^*$ denotes a Hermitian conjugation), where $W(t,x,\la)$ is a solution of the $t$-equation~\eqref{teq} satisfying the initial condition $W(0,x,\la)\equiv I$, is a unique solution of the matrix linear differential equation~\eqref{HFeq_2} satisfying the initial condition $F(0,x,\la)\equiv \pm\sigma_3$.
The sign plus corresponds to a model of amplifier, and the sign minus to a model of attenuator which is the subject of this paper. In the absence of the electric field (${\mathcal E}(t,x)\equiv 0$) one has $F(t,x,\la)\equiv \pm\sigma_3$ that means an unstable/stable medium (amplifier/attenuator) under consideration.

As mentioned above, if ${\mathcal E}(t,x)$, $\rho(t,x,\la)$ and ${\mathcal N}(t,x,\la)$ satisfy the MB equations~\eqref{MB1a}, then \eqref{HFeq_2} (i.e., ${F_t-[U,F]=0}$) and \eqref{ZCC} hold and, therefore, the compatibility conditions~\eqref{ZCC_pm} are satisfied. Conversely, if the conditions~\eqref{ZCC_pm} are satisfied, then the MB equations in the matrix form~\eqref{HFeq_1} and~\eqref{HFeq_2}, which are equivalent to~\eqref{MB1a}, hold (see the proof of Theorem~\ref{solvMBbyM}). Thus, the $t$-equation~\eqref{teq} and the $x^\pm$-equation~\eqref{pmxeq} are compatible if and only if ${\mathcal E}(t,x)$, $\rho(t,x,\la)$ and ${\mathcal N}(t,x,\la)$ satisfy the MB equations~\eqref{MB1a}.

In~\cite{Filipkovska/Kotlyarov}, the IBVP~\eqref{MB1a}--\eqref{MBboundAe} has been studied in the case when $n(\la)$ is the Dirac delta-function, i.e., without inhomogeneous broadening (an unbroadened medium), and asymptotic formulas for its solution in different sectors of the light cone has been obtained, as well as the fulfillment of the causality principle has been shown.
The MB equations without inhomogeneous broadening has been also considered in~\cite{LiMiller}, but different initial-boundary conditions were used there and the equations were written in a comoving frame of reference. In the present paper as well as in~\cite{Filipkovska/Kotlyarov} the MB equations are considered in a laboratory frame. The work~\cite{LiMiller} provides a proper RH problem that generates the unique causal solution (i.e., the solution vanishes outside of the light cone) of the IBVPs for the MB equations.
An application of the IST method to the IBVP~\eqref{MB1},~\eqref{ic} and \eqref{bc} with a smooth and fast decreasing input signal ${\mathcal E}_{\rm in}(t)$ was given in~\cite{Kotlyarov13}, using the simultaneous spectral analysis of equations~\eqref{teq} and~\eqref{pmxeq} and the RH problem. The present paper shows that the IST method in the form of matrix Riemann--Hilbert problem can be used for the case of periodic boundary condition and give an integral representation for ${\mathcal E}(t,x)$, ${\mathcal N}(t,x,\la)$ and $\rho(t,x,\la)$ through the solution of a singular integral equation. Also, it shows that the RH problem allows to study the long-time asymptotic behavior of a solution of the mixed problem for the MB equations, choosing as an example the problem when ${\mathcal E}(t,0)= A_0 {\rm e}^{\ii\omega_0 t}$, which is interesting itself. Later, having studied the asymptotic behavior of the constructed RH problem and equivalent ones, we will be able to obtain formulas for the asymptotics of a solution of the considered IBVP.

\emph{The paper has the following structure}. In Section~\ref{Intro}, the problem statement and its physical interpretation are given, as well as the AKNS systems for the MB equations and their compatibility conditions are considered. In Section~\ref{BasicSolAKNS}, basic (compatible) solutions of the AKNS systems equivalent to the system of the MB equations are constructed and their properties are stated. These solutions will be used to construct the matrix Riemann--Hilbert problem associated with the IBVP~\eqref{MB1a}--\eqref{MBboundAe} which is given in Section~\ref{RHP0}. Then, in Section~\ref{Solv_IBVP_RHP}, we obtain the theorems on the existence, uniqueness and smoothness of a solution of the associated RH problem and prove that this RH problem generates a solution of the IBVP~\eqref{MB1a}--\eqref{MBboundAe}. We give the representation of the solution of the considered IBVP in terms of the solution of the associated RH problem, and also the representation of the electric field envelope in terms of a solution of an integral equation and an equivalent associated RH problem. Also, we prove that the constructed RH problem provides the causality principle. Thus, the problem on the propagation of an electromagnetic wave generated by periodic input signal in a stable medium with distributed two-level atoms (attenuator) is solved. The presented results will be used later to obtain the asymptotics of a~solution of the considered IBVP. In Appendix~\ref{appendixA}, the analysis of the phase function is carried out, the results of which are used in proving the theorems.

 \section{Basic solutions of the systems of the AKNS equations}\label{BasicSolAKNS}

Let us suppose that a solution of the MB equations~\eqref{MB1a} exists, then the AKNS systems~\eqref{teq}, \eqref{pmxeq} (as well as the AKNS system~\eqref{teq},~\eqref{xeq}) are compatible and we can define their ``basic'' solutions with the properties enabling to obtain the matrix RH problem which generates a solution of the IBVP~\eqref{MB1a}--\eqref{MBboundAe}. The upper and lower bank equations~\eqref{pmxeq} allow us to obtain solutions (which we call basic and denote $Y_\pm$, $Z_\pm$; see Section~\ref{PropertyBasicSol}) that have analytic continuations to the upper and lower complex half-planes $\mC_\pm=\{z\in \mC\mid \pm\Im z>0\}$ ($Y_\pm$~have analytic continuations to $\mC_\pm$ except for certain cuts).
First, we will find a ``background'' solution of the AKNS system~\eqref{teq},~\eqref{xeq}, which will be used below.

\subsection{The background solution}

We seek the background solution of the AKNS system~\eqref{teq}, \eqref{xeq} in the form (cf.~\cite{Filipkovska/Kotlyarov})
 \begin{equation}\label{PsolAKNS}
\Phi_0(t,x,\la)= {\rm e}^{\ii(\alpha_0 t+\beta_0 x)\sigma_3} M_0(\la) {\rm e}^{-\ii(\alpha(\la) t+\beta(\la) x)\sigma_3},\qquad \la\in\R,
 \end{equation}
where $\Phi_0(t,x,\la)$ is a matrix function with the unit determinant, and constants $\alpha_0$, $\beta_0$, a $2\times 2$ matrix function $M_0(\la)$ and scalar functions $\alpha(\la)$, $\beta(\la)$ are to be determined.
Consider the logarithmic derivative of $\Phi_0$ in $t$:
\[
(\Phi_0)_t(t,x,\la) \Phi_0^{-1}(t,x,\la)=\ii\alpha_0\sigma_3- \ii\alpha(\la)
 {\rm e}^{\ii(\alpha_0 t+\beta_0 x)\sigma_3}
M_0(\la)\sigma_3 M_0^{-1}(\la) {\rm e}^{-\ii(\alpha_0 t+\beta_0 x)\sigma_3}.
\]
Let
\begin{equation}\label{M_0}
M_0(\la):=\begin{pmatrix} a(\la) & b(\la) \\ b(\la) & a(\la) \end{pmatrix}
\end{equation}
be a matrix with the unit determinant, that is, $a^2(\la)-b^2(\la)\equiv 1$ (then $\det \Phi_0(t,x,\la)\equiv 1$). Then $M_0(\la)\sigma_3 M_0^{-1}(\la)= \big(a^2+b^2\big)\sigma_3-2\ii ab\sigma_2$ and, therefore,
\begin{gather*}
 (\Phi_0)_t(t,x,\la)\Phi_0^{-1}(t,x,\la)\\
 \qquad {} = \ii\big(\alpha_0-\alpha(\la)\big[a^2(\la)+b^2(\la)\big]\big)\sigma_3 -2\alpha(\la) a(\la)b(\la) {\rm e}^{\ii(\alpha_0 t+\beta_0 x)\sigma_3} \sigma_2 {\rm e}^{-\ii(\alpha_0 t+\beta_0 x)\sigma_3}.
\end{gather*}
On the other hand, the relation
\[
(\Phi_0)_t\Phi_0^{-1}=-\ii\la\sigma_3-H_0(t,x),\qquad H_0(t,x)=\frac{1}{2}
\begin{pmatrix}
0& {\mathcal E}_{\rm bg}(t,x)\\-\overline{{\mathcal E}_{\rm bg}(t,x)}& 0
\end{pmatrix},
\]
must be satisfied (because then $\Phi_0$ satisfies the equation~\eqref{teq}, where ${\mathcal E}(t,x)={\mathcal E}_{\rm bg}(t,x)$). Comparing the obtained relations for the logarithmic derivative $(\Phi_0)_t\Phi_0^{-1}$, we obtain that
\[
\alpha(\la)=
\frac{\la+\alpha_0}{a^2(\la)+b^2(\la)}, \qquad
{\mathcal E}_{\rm bg}(t,x)= -4\ii\alpha(\la)a(\la)b(\la)
 {\rm e}^{2\ii(\alpha_0 t+\beta_0 x)}.
\]
We need to find $a(\la)$, $b(\la)$ and $\alpha_0$ such that $\alpha(\la)a(\la)b(\la)$ is constant, since ${\mathcal E}_{\rm bg}(t,x)$ is independent of $\la$. We introduce the function
\begin{equation}\label{varkappa}
\varkappa(z)= \left(\frac{z-\overline{E}}{z-E}\right)^{1/4},
\end{equation}
where $z\in \mC$ ($z=\la+\ii\nu$) and $E$ is a point in the complex plane which will be specified below (see~\eqref{E}), take the branch cut for $\varkappa(z)$ along the closed interval $\big[E,\overline{E}\big]=\big\{z\in\mC\mid z=E+\alpha\big(\overline{E}-E\big), \alpha\in [0,1]\big\}$
and fix its branch by the asymptotics
\[
\varkappa (z)=1+\big(E-\overline{E}\big)/(4z)+O\big(z^{-2}\big),\qquad z\to\infty.
\]
Further, we introduce the function
 \begin{equation}\label{w-funct}
w(z)=\sqrt{(z-E)\big(z-\overline{E}\big)},
 \end{equation}
where $z\in \mC$ and $E$ is a point mentioned above, and fix the branch of $w(z)$ by taking the branch cut along $\big[E,\overline{E}\big]$ and by asymptotics
\[
 w(z)=z-\Re E+O\big(z^{-1}\big),\qquad z\to\infty.
\]
Thus, $\varkappa(z)$ and $w(z)$ are analytic for $z\in \mC\setminus \big[E,\overline{E}\big]$.
Now, we set
\begin{equation}\label{ab}
a(\la)=\frac{\varkappa (\la)+\varkappa^{-1}(\la)}{2},\qquad b(\la)=\frac{\varkappa (\la)-\varkappa^{-1}(\la)}{2},
\end{equation}
then $(a(\la)+b(\la))^2=\varkappa^2(\la)$ and
\begin{equation}\label{a_b}
a^2(\la)+b^2(\la)=\frac{\la-\Re E}{w(\la)},\qquad a(\la)b(\la)=\frac{\ii\Im E}{2 w(\la)},\qquad \la\in\R.
\end{equation}
Hence, ${\mathcal E}_{\rm bg}(t,x)= 2\Im E\frac{\la+\alpha_0}{\la-\Re E} {\rm e}^{2\ii(\alpha_0 t+\beta_0 x)}$ and it is independent of $\la$ if and only if $\alpha_0=-\Re E$. Therefore,
\[
\alpha(\la)=w(\la),\qquad {\mathcal E}_{\rm bg}(t,x)=2\Im E {\rm e}^{2\ii(\alpha_0 t+\beta_0 x)},\qquad \alpha_0=-\Re E.
\]
Comparing ${\mathcal E}_{\rm bg}(t,0)$ with ${\mathcal E}_{\rm in}(t)=A_0 {\rm e}^{\ii\omega_0 t}$, we obtain that $2\Im E=A_0$ and $2\Re E=-2\alpha_0=-\omega_0$, and hence
\begin{equation}\label{E}
E=-\frac{\omega_0}{2}+\ii \frac{A_0}{2}.
\end{equation}
Thus, the matrix function $\Phi_0(t,x,\la)$ satisfies the equation $(\Phi_0)_t= -(\ii\la\sigma_3+H_0(t,x))\Phi_0$, and the matrix $H_0(t):=H_0(t,0)$ is defined by the input signal ${\mathcal E}_{\rm in}(t)$~\eqref{MBboundAe}:
\begin{equation}\label{H_0}
H_0(t):=H_0(t,0)=\frac{1}{2}
\begin{pmatrix}
0&{\mathcal E}_{\rm bg}(t,0)\\-\overline{{\mathcal E}_{\rm bg}(t,0)}
\end{pmatrix}=\frac{1}{2}
\begin{pmatrix}0&{\mathcal E}_{\rm in}(t)\\-\overline{{\mathcal E}_{\rm in}(t)}\end{pmatrix}.
\end{equation}

Further, using the second MB equation $\rho_{{\rm bg}\, t}+2\ii\la\rho_{\rm bg}=\mathcal{N}_{\rm bg}{\mathcal E}_{\rm bg}$, we find that if $\rho_{\rm bg}=\ii c_0\mathcal{E}_{\rm bg}$, then $\mathcal{N}_{\rm bg}=-2c_0(\la-\Re E)$. Taking into account the normalization condition
\[
\mathcal{N}_{\rm bg}^2+|\rho_{\rm bg}|^2=
4c_0^2\big[(\la-\Re E)^2+\Im^2 E\big]=1,
\]
we obtain $c_0=\frac{1}{2w(\la)}$ and $\rho_{\rm bg}=\frac{\ii \mathcal{E}_{\rm bg}}{2w(\la)}$. The first MB equation gives
\[
\int_{-\infty}^\infty n(s)\rho_{\rm bg}(t,x,s)\, \dd s ={\mathcal E}_{{\rm bg}\, t}+{\mathcal E}_{{\rm bg}\, x} =2\ii(\alpha_0+\beta_0)\mathcal{E}_{\rm bg}= 2\ii(\beta_0- \Re E)\mathcal{E}_{\rm bg},
\]
and hence
\begin{equation}\label{beta0}
\beta_0=\Re E +\frac{1}{4}\int_{-\infty}^\infty\frac{n(s)}{w(s)}\, \dd s.
\end{equation}

Thus, we obtain the periodic solution in the form of a plane wave for the MB equations~\eqref{MB1a}:
 \begin{gather}
{\mathcal E}_{\rm bg}(t,x)=
2\Im E {\rm e}^{2\ii(\alpha_0 t+\beta_0 x)},\qquad
\rho_{\rm bg}(t,x,\la)= \frac{\ii\Im E}{w(\la)} {\rm e}^{2\ii(\alpha_0 t+\beta_0 x)},\nonumber\\
\mathcal{N}_{\rm bg}(t,x,\la)=-\frac{\la-\Re E}{w(\la)}, \label{BackgrSol}
 \end{gather}
where $\Im E=A_0/2$, $\alpha_0=-\Re E=\omega_0/2$, $\beta_0$ has the form~\eqref{beta0}, and
\[
{\mathcal E}_{\rm bg}(t,0)= A_0 {\rm e}^{\ii\omega_0 t}={\mathcal E}_{\rm in}(t)
\]
is the input signal~\eqref{MBboundAe}. The third MB equation is evidently fulfilled.
The functions $\rho_{\rm bg}(t,x,z)$ and $\mathcal{N}_{\rm bg}(t,x,z)$ are analytic for $z\in \mC\setminus\big[E,\overline{E}\big]$, i.e., they are analytic in the same domain as~$w(z)$. Note that $\rho_{\rm bg}(t,x,\la)$ and the partial derivative $(\mathcal{N}_{\rm bg})_\la(t,x,\la)$ are discontinuous in $\la$ at the point~$\Re E$.
To obtain the infinitely differentiable solution with respect to $\la$ on the whole line $\R$, we have to redefine the function $w(z)$: we draw a branch cut connecting the points $E$ and~$\overline{E}$ via infinity without intersection of the real line and fix the branch of $w(z)$ by the condition $w(0)=|E|$.

The solution~\eqref{BackgrSol} is equivalent to the background solution obtained in \cite[Section II.B]{BGKL19}, although the work \cite{BGKL19} considers a different IBVP for the MB equations, written in a comoving frame of reference, with similar nonzero boundary conditions as $t\to +\infty$, where $t$ is a retarded time and does not coincide with the variable $t$ that is used in the present paper (which considers the MB equations in a laboratory frame). In comparison with \cite{BGKL19} that considers a more general input signal and studies soliton solutions, the present work carries out a more rigorous analysis: carefully formulates the IST for the IBVP, give the representations of a solution of the IBVP in terms of the associated RH problems and provides conditions for the existence and uniqueness of solutions of the IBVP and the associated RH problems, as well as proves certain important properties of solutions.

Now, let us consider the logarithmic derivative of $\Phi_0(t,x,\la)$~\eqref{PsolAKNS} with respect to $x$:
\begin{align*}
(\Phi_0)_x\Phi_0^{-1}&=
\ii\beta_0\sigma_3- \ii\beta(\la)
 {\rm e}^{\ii(\alpha_0 t+\beta_0 x)\sigma_3} M_0(\la) \sigma_3 M_0^{-1}(\la) {\rm e}^{-\ii(\alpha_0 t+\beta_0 x)\sigma_3} \\
&=\ii\big(\beta_0- \beta(\la)\big[a^2(\la)+b^2(\la)\big]\big) \sigma_3 - 2\beta(\la)a(\la)b(\la)
 {\rm e}^{\ii(\alpha_0 t+\beta_0 x)\sigma_3}
\sigma_2 {\rm e}^{-\ii(\alpha_0 t+\beta_0 x)\sigma_3}.
\end{align*}
Denote
\begin{gather*}
G_0(t,x,\la)= \frac{1}{4} {\rm p.v.}\int_{-\infty}^\infty \frac{F_0(t,x,s)n(s)}{s-\la}\, \dd s, \\
F_0(t,x,s)= \begin{pmatrix} {\mathcal N}_{\rm bg}(t,x,s) &\rho_{\rm bg}(t,x,s)\\ \overline{\rho_{\rm bg}(t,x,s)} &-{\mathcal N}_{\rm bg}(t,x,s) \end{pmatrix}.
\end{gather*}
Since we need to obtain the $x$-equation of the AKNS system, i.e., the equation~\eqref{xeq}, then
\begin{align*}
(\Phi_0)_x\Phi_0^{-1} ={}& \ii\la\sigma_3+H_0-\ii G_0= \ii\left(\la-\frac{1}{4} {\rm p.v.}\int_{-\infty}^\infty \frac{\mathcal{N}_{\rm bg}(t,x,s) n(s)}{s-\la}\, \dd s \right)\sigma_3 \\
 &{}+\frac{1}{2} \begin{pmatrix} 0 & {\mathcal E}_{\rm bg}(t,x) \\ -\overline{{\mathcal E}_{\rm bg}(t,x)} & 0\end{pmatrix}- \frac{\ii}{4} {\rm p.v.}\int_{-\infty}^\infty
\begin{pmatrix} 0 & \rho_{\rm bg}(t,x,s) \\ \overline{\rho_{\rm bg}(t,x,s)} & 0 \end{pmatrix} \frac{n(s)}{s-\la}\, \dd s \\
 ={}&\ii\left(\la +\frac{1}{4} {\rm p.v.}\int_{-\infty}^\infty \frac{(s-\Re E)n(s)}{w(s)(s-\la)}\, \dd s\right) \sigma_3 \\
 &{}+\frac{1}{2} \begin{pmatrix} 0 & {\mathcal E}_{\rm bg}(t,x) \\ -\overline{{\mathcal E}_{\rm bg}(t,x)} & 0 \end{pmatrix} \left[1+\frac{1}{4} {\rm p.v.}\int_{-\infty}^\infty \frac{n(s)}{w(s)(s-\la)}\, \dd s\right].
\end{align*}
Using the relations $s-\Re E=\la-\Re E +s-\la$,~\eqref{a_b}, and~\eqref{beta0} and comparing the relations for the logarithmic derivative $(\Phi_0)_x\Phi_0^{-1}$, we obtain
\[
\beta(\la)=-w(\la)\left(1+
\frac{1}{4} {\rm p.v.}\int_{-\infty}^\infty\frac{n(s)}{w(s)(s-\la)}\, \dd s \right),\qquad \la\in\R.
\]

 \subsection{Preliminaries}

In what follows, \emph{we suppose that $n(\pm\Lambda)=0$, where $0<\Lambda\le+\infty$, and if $\Lambda<\infty$, then $n(\la)=0$ on $\R\setminus[-\Lambda,\Lambda]$} (\emph{i.e., $n(\la)$ has a compact support $[-\Lambda,\Lambda]$}, $\Lambda>0$).
\emph{In the case when $\Lambda=+\infty$, we denote $[-\Lambda,\Lambda]:=\R$} for convenience.
Thus, we consider the two cases: $\Lambda=+\infty$ and $0<\Lambda<+\infty$.

Below, we use the ordinary H\"{o}lder condition and the H\"{o}lder condition in a neighborhood of infinity \cite{Muskhelishvili}. As usual, a scalar function $f(\zeta)$ satisfies the H\"{o}lder condition on a set $Z\subset\mC$ or~$\subset\R$ if there exist positive constants $C$, $\mu$ such that $|f(\zeta_1)-f(\zeta_2)|\le C|\zeta_1-\zeta_2|^\mu$ for any $\zeta_1, \zeta_2\in Z$. The scalar function $f(\zeta)$ ($\zeta\in\mC$ or $\in\R$) satisfies the \emph{H\"{o}lder condition in a neighborhood of infinity} if there exist positive constants $C$, $\mu$ such that $|f(\zeta_1)-f(\zeta_2)|\le C\big|\zeta_1^{-1}-\zeta_2^{-1}\big|^\mu$ for sufficiently large $|\zeta_1|$, $|\zeta_2|$ (for any $|\zeta_1|$, $|\zeta_2|$ from the neighborhood of infinity).
Notice that if $\lim_{|\zeta|\to\infty}f(\zeta)=c<\infty$ and $f(\zeta)=c+O(|\zeta|^{-\mu})$, where $\mu={\rm const}>0$, for sufficiently large $|\zeta|$ ($|\zeta|\to\infty$), then $f(\zeta)$ is said to satisfy the H\"{o}lder condition at the point $\zeta=\infty$ \cite{Muskhelishvili}.
It is known that if the function $f(\zeta)$ is defined on an arc $L$ and satisfies the H\"{o}lder condition with the index $\mu>1$ on it, then $f(\zeta)={\rm const}$ on $L$ (see, e.g., \cite{Muskhelishvili}). Thus, in what follows, we assume that $0<\mu\le 1$ in the H\"{o}lder conditions.

In what follows, \emph{we assume that $n(\la)$} (recall that, according to the problem statement, $n(\la)\ge 0$ and $\int_{-\infty}^\infty n(\la)\, \dd\la=1$, and thus $n(\la)$ is absolutely integrable on any interval in $\R$) \emph{satisfies the following conditions}:
 \begin{enumerate}[label={\upshape\arabic*.},ref={\upshape\arabic*}]\itemsep=0pt
\item\label{Lambda_infty} In the case when $\Lambda=+\infty$, the function $n(\la)$ satisfies the H\"{o}lder condition on $\R$ and in a~neighborhood of infinity, and ${n(\pm\Lambda)=0}$ ($n(\la)=O(|\la|^{-\mu})$, $|\la|\to+\infty$, $0<\mu\le 1$).
\item\label{Lambda_finite} In the case when $\Lambda<\infty$ ($\Lambda>0$), the function $n(\la)$ has a compact support $[-\Lambda,\Lambda]$ and one of the following conditions holds:
 \begin{enumerate}[label={\upshape 2.\arabic*.},ref={\upshape 2.\arabic*}]\itemsep=0pt 
 \item\label{Lambda_finite1} $n(\la)$ satisfies the H\"{o}lder condition on $[-\Lambda,\Lambda]$ and $n(\pm\Lambda)=0$;
 \item\label{Lambda_finite2} $n(\la)={\rm const}$ on $(-\Lambda,\Lambda)$ and $n(\pm\Lambda)=0$; in this case
 \begin{equation}\label{Ex.n-const}
 n(\la)= \begin{cases} \displaystyle \frac{1}{2\Lambda}, & |\la|<\Lambda, \\ 0, & |\la|\ge\Lambda. \end{cases}
 \end{equation}
 \end{enumerate}
 \end{enumerate}

In general, $n(\la)$ can be a singular generalized function, for example, the delta function. The case when $n(\la)=\delta(\la)$, where $\delta(\lambda)$ is the Dirac delta function, was considered in \cite{Filipkovska/Kotlyarov}.

Introduce the function
\begin{equation}\label{eta}
\eta(z)= z+\frac{1}{4}\int_{-\Lambda}^\Lambda \frac{n(s)}{s-z}\, \dd s,\qquad \Lambda\le \infty,
\end{equation}
where $\Lambda<\infty$ if it is supposed that $n(\la)$ has the compact support $[-\Lambda,\Lambda]$, and $\Lambda=+\infty$ otherwise.
The function $\eta(z)$ is analytic for $z\in \mC\setminus[-\Lambda,\Lambda]$, and, hence, the function $\Im\eta(z)$ is harmonic in $\mC\setminus[-\Lambda,\Lambda]$. Since
\begin{equation*}
 \Im\eta(z)=\Im z\left(1+\frac{1}{4}\int_{-\Lambda}^\Lambda \frac{n(s)}{(s-\Re z)^2+\Im^2 z}\, \dd s\right),
\end{equation*}
then
\[
\sgn (\Im\eta(z))=\sgn(\Im z).
\]
The function $\eta(z)$ has the boundary (limiting) values $\eta_\pm(\la)=\eta(\la\pm\ii0)$ ($z=\la+\ii\nu$), continuous on $[-\Lambda,\Lambda]$:
\begin{align}
 \eta_\pm(\la)&{}=\la+\frac{1}{4}\int_{-\Lambda}^\Lambda \frac{n(s)}{s-\la\mp\ii0}\, \dd s \nonumber\\
 &{}= \la+ \frac{1}{4} {\rm p.v.} \int_{-\Lambda}^\Lambda \frac{n(s)}{s-\la}\, \dd s \pm\frac{\pi\ii}{4}n(\la),\qquad \la\in[-\Lambda,\Lambda],\ \Lambda\le\infty,\label{eta_pm}
\end{align}
and the jump $\eta(\la+\ii0)-\eta(\la-\ii0)=\frac{\pi\ii}{2}n(\la)$ across $[-\Lambda,\Lambda]$.

In the formula~\eqref{eta_pm}, for case~\ref{Lambda_infty} and for case~\ref{Lambda_finite} with condition~\ref{Lambda_finite1} (see above), we have
\begin{gather*}
{\rm p.v.} \int_{-\Lambda}^\Lambda \frac{n(s)}{s-\la}\, \dd s= \begin{cases}
 \displaystyle\int_{-\Lambda}^\Lambda \frac{n(s)-n(\la)}{s-\la}\, \dd s+ n(\la) \ln\frac{\Lambda-\la}{\Lambda+\la}, & \la\in (-\Lambda,\Lambda),\ \Lambda<\infty, \vspace{1mm}\\
\displaystyle \int_{-\Lambda}^\Lambda \frac{n(s)}{s\mp\Lambda}\, \dd s, &
 \la=\pm\Lambda,\ \Lambda<\infty, \vspace{1mm}\\
\displaystyle \int_{-\Lambda}^\Lambda \frac{n(s)-n(\la)}{s-\la}\, \dd s, & \la\in\R,\ \Lambda=+\infty,
 \end{cases}
\end{gather*}
for case~\ref{Lambda_finite2}, i.e., $n(\la)$ of the form~\eqref{Ex.n-const}, we have
\[
{\rm p.v.} \int_{-\Lambda}^\Lambda \frac{n(s)}{s-\la}\, \dd s =\begin{cases}
\displaystyle \frac{1}{2\Lambda}\ln\frac{\Lambda-\la}{\Lambda+\la}, & \la\in (-\Lambda,\Lambda), \\
 0, & \la=\pm\Lambda.
\end{cases}
\]

Notice that, generally, instead of condition~\ref{Lambda_finite1} one can require that the function $n(\la)$ \emph{belong to the class $H^*$ on} $[-\Lambda,\Lambda]$ \cite{Muskhelishvili}, which means the following: $n(\la)$ satisfies the H\"{o}lder condition on any closed interval in $(-\Lambda,\Lambda)$ and near the endpoints $\mp\Lambda$ can be represented as $n(\la)=\frac{n_i(\la)}{|\la\pm \Lambda|^{\alpha_i}}$, $i=1,2$, respectively, where $0\le\alpha_i<1$ and $n_1(\la)$, $n_2(\la)$ are defined and satisfy the H\"{o}lder condition on sufficiently small parts of $[-\Lambda,\Lambda]$ that are adjacent to the endpoints $-\Lambda$,~$\Lambda$, respectively (the contour $[-\Lambda,\Lambda]$ is oriented from $-\Lambda$ to~$\Lambda$).
Then the function $\eta(z)$~\eqref{eta} is analytic on $\mC\setminus[-\Lambda,\Lambda]$, continuous up to $(-\Lambda,\Lambda)$ and has weak singularities at the points~$\pm\Lambda$ \big(the singularities of the type $O\big((z\mp\Lambda)^{-\nu}\big)$, $0<\nu<1$\big). Near the endpoints $\pm\Lambda$, the Cauchy integral in~\eqref{eta} admits the estimate $\left|\int_{-\Lambda}^\Lambda \frac{n(s){\rm d}s}{s-z}\right|<\frac{{\rm const}}{|z\mp\Lambda|^{\nu_i}}$, $\alpha_i<\nu_i<1$, $i=1,2$, where~$\alpha_1$,~$\alpha_2$ were defined above (cf. \cite{Muskhelishvili}).
A function $\eta(z)$ will be used when constructing the RH problem as well as the basic solutions of the AKNS systems (see below). In order for the RH problem to have no singularities, including weak singularities, at the points $\pm\Lambda$, we will not use this requirement and keep condition~\ref{Lambda_finite1} for $n(\la)$, although in the case of a more general RH problem it can be used.

Since we consider the problem on the propagation of an electromagnetic wave in the stable medium (i.e., the problem for an attenuator), we can assume that ${\mathcal N}(t,x,\la)+1\to 0$ fast enough as $x\to\infty$ so that $[{\mathcal N}(t,x,\la)+1]\in L^1(\R_+)$ with respect to $x$ for all $t\in\R_+$, $\la\in\R$ (or, given the relationship~\eqref{rhoN} between $\rho(t,x,\la)$ and ${\mathcal N}(t,x,\la)$, we can assume that $\rho(t,x,\la)\to 0$ fast enough as $x\to\infty$ for all $t\in\R_+$, $\la\in\R$).

The next known lemma will be used below.
 \begin{Lemma}[\cite{BK00,Kotlyarov13}]\label{lem 2.1}
Let the system~\eqref{teq},~\eqref{xeq} be compatible $($i.e., the condition~\eqref{ZCC} holds$)$ for all $t,x,\la \in\R$. Assume that $W(t,x,\la)$ is a matrix satisfying the $t$-equation~\eqref{teq} for all $x$, and $W(t_0,x,\la)$ satisfies the $x$-equation~\eqref{xeq} for some $t=t_0$, $t_0\le\infty$. Then $W(t,x,\la)$ satisfies~\eqref{xeq} for all $t$. The same result is true, if one replaces $t$ by $x$ and vice versa.
 \end{Lemma}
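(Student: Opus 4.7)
The plan is to show that if $W$ solves the $t$-equation for all $x$ and solves the $x$-equation at one instant $t=t_0$, then the ``defect'' of the $x$-equation vanishes identically. I would introduce the auxiliary matrix
\[
R(t,x,\la):=W_x(t,x,\la)-V(t,x,\la)W(t,x,\la),
\]
so that satisfying the $x$-equation is equivalent to $R\equiv 0$. By hypothesis, $R(t_0,x,\la)=0$ for all $x,\la$.

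Next I would differentiate $R$ with respect to $t$ and use the $t$-equation together with the compatibility condition \eqref{ZCC}. Computing $W_{xt}=(W_t)_x=(UW)_x=U_xW+UW_x$ and substituting $W_x=R+VW$, a short manipulation yields
\[
R_t=(U_x-V_t+[U,V])W+UR=UR,
\]
the bracketed expression vanishing precisely because the AKNS system is compatible. Thus $R(\cdot,x,\la)$ satisfies the linear homogeneous ODE $R_t=U(t,x,\la)R$ in $t$ with the zero initial value $R(t_0,x,\la)=0$; uniqueness for linear ODEs (at each fixed $x,\la$) then forces $R\equiv 0$, which is the desired conclusion.

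The symmetric statement (swapping the roles of $t$ and $x$) is proved by the same argument: set $\widetilde R:=W_t-UW$, differentiate in $x$, use $W_x=VW$ and the same compatibility identity \eqref{ZCC}, and conclude that $\widetilde R$ satisfies a linear ODE in $x$ with vanishing datum. No step is really an obstacle here; the only point requiring minor care is that the argument is a pointwise (in $x$ and $\la$) application of ODE uniqueness in the $t$-variable, which needs $U(t,x,\la)$ to be sufficiently regular in $t$ on $[t_0,t]$. This is guaranteed by the smoothness of $\mathcal E$ assumed in the problem setting, so the lemma follows.
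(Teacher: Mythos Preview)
Your proof is correct and is exactly the standard argument for this compatibility lemma. The paper does not give its own proof of Lemma~\ref{lem 2.1}; it simply cites the result from \cite{BK00,Kotlyarov13}, and your argument is precisely the one found in those references.
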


 \subsection{The basic solutions of the AKNS systems and their properties}\label{PropertyBasicSol}

We will seek the two pairs $Y_+(t,x,\la)$, $Z_+(t,x,\la)$ and $Y_-(t,x,\la)$, $Z_-(t,x,\la)$ of the ``basic'' solutions of the AKNS systems~\eqref{teq}, \eqref{pmxeq}, which are $2\times 2$ matrix functions and satisfy the properties specified in Theorem~\ref{Theorem_SolAKNS} below.

The first ``basic'' solutions of the AKNS systems~\eqref{teq},~\eqref{pmxeq} we seek in the form
\begin{equation}\label{Y}
 Y_\pm(t,x,\la)= W_\pm(t,x,\la)\Phi(t,\la),\qquad \la\in\R,
\end{equation}
where the matrix functions $W_\pm(t,x,\la)$ satisfy the $x^\pm$-equations~\eqref{pmxeq} (respectively) for each $t\in\R_+$ and $W_\pm(t,0,\la)=I$, and the matrix function $\Phi(t,\la)$ satisfies the $t$-equation~\eqref{teq} for $x=0$ with the matrix $H(t,0)=H_0(t)$~\eqref{H_0} defined by the input signal~\eqref{MBboundAe}, i.e.,
\begin{equation}\label{Phi_AKNS}
\Phi(t,\la)=\Phi_0(t,0,\la)= {\rm e}^{-\ii\Re E t\sigma_3} M_0(\la) {\rm e}^{-\ii w(\la) t\sigma_3},
\end{equation} 
where $M_0(\la)$ and $w(\la)$ are defined by~\eqref{M_0},~\eqref{ab},~\eqref{varkappa} and~\eqref{w-funct}, and $E=-\omega_0/2+\ii A_0/2$ is determined by the boundary data~\eqref{MBboundAe}.
The second ``basic'' solutions we seek in the form
\begin{equation}\label{Z}
Z_\pm(t,x,\la)=\Psi(t,x,\la)w_\pm(x,\la),\qquad \la\in\R,
\end{equation}
where $\Psi(t,x,\la)$ satisfies the $t$-equation~\eqref{teq} for each $x\in (0,L)$, ${L\le\infty}$, and~$\Psi(0,x,\la)=I$, and the matrix functions $w_\pm(x,\la)$ satisfy the $x^\pm$-equations~\eqref{pmxeq} (respectively) for $t=0$ and ${w_\pm(L,\la){\rm e}^{-\ii\eta_\pm(\la)L\sigma_3}=I}$, $L\le+\infty$, where{\samepage
\[
w_\pm(L,\la){\rm e}^{-\ii\eta_\pm(\la)L\sigma_3}= \lim_{x\to+\infty}w_\pm(x,\la){\rm e}^{-\ii\eta_\pm(\la)x\sigma_3}=I
\] when $L=+\infty$.}

Under the conditions imposed on the function $n(\la)$ above, $Y_+(t,x,\la)=Y_-(t,x,\la)$ and $Z_+(t,x,\la)=Z_-(t,x,\la)$ (similarly for $W_\pm(t,x,\la)$ and $w_\pm(x,\la)$) for $\la\in\R\setminus[-\Lambda,\Lambda]$, $\Lambda<\infty$.

Recall that $\big[E,\overline{E}\big]=\big\{E+\alpha\big(\overline{E}-E\big) \mid \alpha\in [0,1]\big\}$ is a closed interval on the complex plane $\mC$. The interval $\big[E,\overline{E}\big]$ is oriented downward. Below we consider the parts of $\big[E,\overline{E}\big]$, for example, the semi-open intervals $[E,\Re E)$ and $\big(\Re E,\overline{E}\big]$.

Throughout the paper, $A[i]$ denotes the $i$-th column of a matrix $A$.

In the theorem below, for convenience, we assume that $L=+\infty$, i.e., $x\in\R_+$. For the case when $x\in (0,L)$, $L<+\infty$, the theorem remains valid and the proof is similar.

 \begin{Theorem}\label{Theorem_SolAKNS}
Let the functions ${\mathcal E}(t,x)$, $\rho(t,x,\la)$ and $\mathcal N(t,x,\la)$ be a solution of the IBVP \eqref{MB1a}--\eqref{MBboundAe} and have the following properties: ${{\mathcal E}(t,x)\in C^1(\R_+\times\R_+)}$, the partial derivatives of $\rho(t,x,\la)$ and ${\mathcal N}(t,x,\la)$ with respect to $t$ and $\la$ are continuous for ${(t,x,\la)\in\R_+\times\R_+\times\R}$, and \linebreak
${{\mathcal N}(t,x,\la)+1\to 0}$ fast enough as $x\to\infty$ so that $[{\mathcal N}(t,x,\la)+1]\in L^1(\R_+)$ with respect to~$x$ for all $t\in\R_+$, $\la\in\R$.
Suppose $n(\la)$ satisfies conditions~\ref{Lambda_infty},~\ref{Lambda_finite} mentioned above.

Then there exist solutions $Y_\pm(t,x,\la)$ and $Z_\pm(t,x,\la)$ of the systems of the $t$- and $x^\pm$-equa\-tions~\eqref{teq}, \eqref{pmxeq}, which have the form~\eqref{Y} and~\eqref{Z}, where ${Y_+(t,x,\la)=Y_-(t,x,\la)}$ and \linebreak
${Z_+(t,x,\la)=Z_-(t,x,\la)}$ for $\la\in\R\setminus[-\Lambda,\Lambda]$, $0<\Lambda<\infty$ $(t, x\in[0,\infty))$, and these solutions have the following properties:
 \begin{enumerate}[label={\upshape(\alph*)},ref={\upshape(\alph*)}]\itemsep=0pt
\item\label{UnitDeterminant} $\det Y_\pm(t,x,\la)\equiv 1$ and $\det Z_\pm(t,x,\la)\equiv 1$, $\la\in\R$;

\item\label{Differ} $Y_\pm(t,x,\la)$ and $Z_\pm(t,x,\la)$ are continuously differentiable in $t$ and $x$;

\item\label{AsymptY} the functions $Y_+(t,x,\la)$ and $Y_-(t,x,\la)$ have the analytic continuations $Y^+(t,x,z)$ and $Y^-(t,x,z)$ for $z\in\mC_+^{\rm cl}\setminus([-\Lambda,\Lambda]\cup[E,\Re E])$ and $z\in\mC_-^{\rm cl}\setminus\big([-\Lambda,\Lambda]\cup\big[{\Re} E,\overline{E}\big]\big)$, respectively, where $\mC_\pm^{\rm cl}=\mC_\pm\cup\R$, $\Lambda\le+\infty$, $(Y^+(t,x,z)=Y^-(t,x,z)$ for $\la\in\R\setminus[-\Lambda,\Lambda]$, $\Lambda<\infty)$, which are continuous up to $([-\Lambda,\Lambda]\setminus\{\Re E\})\cup[E,\Re E)$ and $\big([-\Lambda,\Lambda]\setminus\{\Re E\}\big)\cup\big(\Re E,\overline{E}\big]$ respectively, have the singularities of the type $(z-E)^{-1/4}$ and \smash{$\big(z-\overline{E}\big)^{-1/4}$} at the points $E$ and~$\overline{E}$ respectively, and are bounded in the neighborhood of the point $\Re E$, and also, for fixed $t$ and $x$,
 \begin{alignat*}{3}
 &Y^-[1](t,x,z){\rm e}^{\ii(zt-\eta(z)x)}= \begin{pmatrix}1\\0\end{pmatrix}+ O\big(z^{-1}\big),\qquad &&z\to\infty,\ z\in\mC_-^{\rm cl}, &\\
 &Y^+[2](t,x,z){\rm e}^{-\ii(zt-\eta(z)x)}= \begin{pmatrix}0\\1\end{pmatrix}+ O\big(z^{-1}\big),\qquad && z\to\infty,\ z\in\mC_+^{\rm cl}.& 
 \end{alignat*}

\item\label{AsymptZ} the functions $Z_+(t,x,\la)$ and $Z_-(t,x,\la)$ have the analytic continuations $Z^+(t,x,z)$ and $Z^-(t,x,z)$ for $z\in\mC_+^{\rm cl}\setminus[-\Lambda,\Lambda]$ and $z\in\mC_-^{\rm cl}\setminus[-\Lambda,\Lambda]$ respectively, where $\Lambda\le+\infty$, ($Z^+(t,x,z)=Z^-(t,x,z)$ for $\la\in\R\setminus[-\Lambda,\Lambda]$, $\Lambda<\infty$) which are continuous up to $[-\Lambda,\Lambda]$, and
 \begin{alignat*}{3}
 &Z^+[1](t,x,z){\rm e}^{\ii(zt-\eta(z)x)}= \begin{pmatrix}1\\0\end{pmatrix}+ O\big(z^{-1}\big),\qquad &&z\to\infty,\ z\in\mC_+^{\rm cl},& \\
 &Z^-[2](t,x,z){\rm e}^{-\ii(zt-\eta(z)x)}= \begin{pmatrix}0\\1\end{pmatrix}+ O\big(z^{-1}\big),\qquad &&z\to\infty,\ z\in\mC_-^{\rm cl}. &
 \end{alignat*}
 \end{enumerate}
 \end{Theorem}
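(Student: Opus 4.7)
The plan is to construct $Y_\pm=W_\pm\Phi$ and $Z_\pm=\Psi w_\pm$ as the products of a factor solving one of the two AKNS equations along a coordinate axis with a factor propagating the other equation off that axis, and then to invoke Lemma~\ref{lem 2.1} (applied to the compatible system \eqref{teq},~\eqref{pmxeq}) to conclude that each product automatically solves the full system. The factor $\Phi(t,\la)=\Phi_0(t,0,\la)$ has already been written explicitly in \eqref{Phi_AKNS}; it inherits the quartic singularities $\varkappa(z)\sim(z-E)^{-1/4}$ and $\varkappa^{-1}(z)\sim\big(z-\overline{E}\big)^{-1/4}$ at $E,\overline{E}$ from $M_0(z)$, together with $M_0(z)=I+O(z^{-1})$ and $w(z)=z+O(z^{-1})$ as $|z|\to\infty$. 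The factor $\Psi(t,x,\la)$ is obtained, for each fixed $x$, as the unique $C^1$-in-$t$ solution of \eqref{teq} with $\Psi(0,x,\la)=I$; its existence, continuous differentiability, and entireness in $\la$ are standard, since $U(t,x,\la)=-\ii\la\sigma_3-H(t,x)$ is jointly continuous in $(t,x)$ and polynomial in $\la$.

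For the missing factors I would set up Volterra integral equations. At fixed $t$, $W_\pm(t,x,\la)$ is obtained by iterating
\[
W_\pm(t,x,\la)=I+\int_0^x V_\pm(t,y,\la)W_\pm(t,y,\la)\,\dd y,
\]
and analyticity of the resulting series in $z\in\mC_\pm\setminus[-\Lambda,\Lambda]$ follows from the analyticity of the coefficient $V^\pm(t,x,z)=\ii z\sigma_3+H(t,x)-\ii G(t,x,z)$ in that domain, which is a direct consequence of \eqref{continuation_G} and the Sokhotski--Plemelj formulas \eqref{G_pm}. For $w_\pm(x,\la)$ at $t=0$ I would first factor off the oscillatory background, writing $w_\pm(x,\la)=\tilde w_\pm(x,\la){\rm e}^{\ii\eta_\pm(\la)x\sigma_3}$, and then pose the Volterra equation for $\tilde w_\pm-I$ on $[x,+\infty)$; convergence of its Neumann series is ensured by the hypothesis $\mathcal N(0,y,\la)+1\in L^1(\R_+)$ in $y$ (which via \eqref{rhoN} also bounds $\rho(0,y,\la)$), together with the matching of $\sgn\Im\eta_\pm$ to the chosen half-plane, which keeps the relevant exponential factors bounded. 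The trivial initial data \eqref{MBini} in fact make the needed estimate immediate.

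With $\Phi,\Psi,W_\pm,w_\pm$ in hand, property \ref{UnitDeterminant} follows by tracelessness of $U$ and $V_\pm$ (determinants are preserved from the seeding values $1$) and \ref{Differ} from the assumed smoothness of $\mathcal E,\rho,\mathcal N$. That $Y_\pm$ also satisfies the $t$-equation, and $Z_\pm$ also satisfies the $x^\pm$-equation, is then the content of (the obvious analogue of) Lemma~\ref{lem 2.1} for the compatible pair \eqref{teq},~\eqref{pmxeq}, since $Y_\pm(t,0,\la)=\Phi(t,\la)$ already satisfies \eqref{teq} at $x=0$ and $Z_\pm(0,x,\la)=w_\pm(x,\la)$ already satisfies \eqref{pmxeq} at $t=0$.

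The main obstacle, and the step I would treat most carefully, is \ref{AsymptY}--\ref{AsymptZ}: transporting the branch-point structure through the factorizations and extracting the large-$z$ asymptotics. The singularities at $E,\overline{E}$ are contributed entirely by $\Phi$ (the coefficient $V_\pm$ is regular there), so $Y_\pm$ acquires them through $Y_\pm=W_\pm\Phi$, and boundedness at $\Re E$ follows because the two quartic factors in $M_0$ combine into a function analytic at $\Re E$. The large-$z$ asymptotics are obtained by substituting the Neumann-series tail bounds $W_\pm(t,x,z)=I+O(z^{-1})$ and $\tilde w_\pm(x,z)=I+O(z^{-1})$ into the factorizations and using $\eta(z)=z+O(z^{-1})$. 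The column-by-column formulation in the statement encodes the fact that, in each half-plane, only one column of $Y_\pm$ or $Z_\pm$ is driven by a decaying exponential, and that is precisely the column for which the corresponding Volterra iteration (forward in $x$ for $W_\pm$, backward from $+\infty$ for $w_\pm$) is a contraction, giving uniform convergence and the claimed asymptotic.
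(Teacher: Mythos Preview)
Your approach is essentially the paper's: both use the factorizations $Y_\pm=W_\pm\Phi$, $Z_\pm=\Psi\,w_\pm$, invoke Lemma~\ref{lem 2.1} for compatibility, build $W_\pm$ and $\Psi$ by Volterra iteration, note (as you do in passing) that the trivial initial data force $w_\pm(x,\la)={\rm e}^{\ii\eta_\pm(\la)x\sigma_3}$ exactly, read the $(z-E)^{-1/4}$ singularities off $M_0$ alone, and deduce $\det\equiv 1$ from tracelessness.

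The one place where the paper is more careful than your sketch is the large-$z$ step for $W_\pm$. Your assertion $W_\pm(t,x,z)=I+O(z^{-1})$ is false as written: $W_\pm$ carries the full oscillatory factor $\sim{\rm e}^{\ii z x\sigma_3}$, so the naive Volterra iteration you write down gives no decay. The paper therefore performs a \emph{double} renormalization, setting $\widetilde W_\pm={\rm e}^{-\ii\alpha_\pm\sigma_3}W_\pm{\rm e}^{-\ii\eta_\pm(\la)x\sigma_3}$, where the extra gauge phase $\alpha_\pm(t,x,\la)=-\tfrac14\int_{-\Lambda}^{\Lambda}\widehat{\mathcal N}(t,x,s)n(s)(s-\la\mp\ii0)^{-1}\,\dd s$ absorbs the residual diagonal of $V_\pm$ beyond $\ii\eta_\pm\sigma_3$. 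After this, the Volterra kernel for $\widetilde W_\pm$ is purely off-diagonal, and the column-by-column asymptotics you correctly anticipate in your last paragraph can be extracted cleanly (one exponential factor in each column decays in the appropriate half-plane). Your outline is otherwise on target; it is this renormalization that closes the gap between ``set up a Volterra equation'' and the actual $O(z^{-1})$ bounds in \ref{AsymptY}--\ref{AsymptZ}.
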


 \begin{proof}
Since ${\mathcal E}(t,x)$, $\rho(t,x,\la)$ and $\mathcal N(t,x,\la)$ satisfy the MB equations~\eqref{MB1a}, then the AKNS systems \big(the $t$- and $x^\pm$-equations\big)~\eqref{teq},~\eqref{pmxeq} are compatible, i.e., the compatibility conditions~\eqref{ZCC_pm} hold.

By Lemma~\ref{lem 2.1}, if the matrices $W_\pm(t,x,\la)$, $\Phi(t,\la)$ and $\Psi(t,x,\la)$, $w_\pm(x,\la)$ from~\eqref{Y} and~\eqref{Z} exist, then the matrix functions $W_\pm(t,x,\la)\Phi(t,\la)$ \eqref{Y} and
$\Psi(t,x,\la)w_\pm(x,\la)$ \eqref{Z} are solutions of the compatible AKNS systems~\eqref{teq},~\eqref{pmxeq}.
Indeed, since $W_\pm(t,x,\la)$ are solutions of the $x^\pm$-equations~\eqref{pmxeq} (respectively) for all $t$, then the product $W_\pm(t,x,\la)\Phi(t,\la)$ are also solutions of the $x^\pm$-equations for all $t$. Further, since $\Phi(t,\la)$ is a~solution of the $t$-equation~\eqref{teq} for $x=0$, then $W_\pm(t,0,\la)\Phi(t,\la)= \Phi(t,\la)$ are also solutions of $t$-equation for $x=0$ and therefore, by Lemma~\ref{lem 2.1}, $Y_\pm(t,x,\la)=W_\pm(t,x,\la)\Phi(t,\la)$ are solutions of the $t$-equation for all $x$. Thus, $Y_\pm(t,x,\la)$~\eqref{Y} are solutions of the equations~\eqref{teq} and~\eqref{pmxeq} for all~$t$,~$x$. The proof for $Z_\pm(t,x,\la)$~\eqref{Z} is similar.

The matrix function $\Phi(t,\la)$~\eqref{Phi_AKNS} was obtained above. In addition, it follows from the form of $\Phi(t,\la)$ that it is infinitely differentiable in $t$.

The existence of the matrix functions $W_\pm(t,x,\la)$, $\Psi(t,x,\la)$, and $w_\pm(x,\la)$ are proved below.
In view of the above, the existence of the solutions~\eqref{Y} and~\eqref{Z} follows from the existence of these matrix functions.

Along with proving the existence of the solutions, we will prove their properties~\ref{Differ},~\ref{AsymptY} and~\ref{AsymptZ}. Property~\ref{UnitDeterminant} will be proved at the end.

Note that the function~\eqref{continuation_G}, that is,
\[
G(t,x,z)=\frac{1}{4}\int_{-\Lambda}^\Lambda \frac{F(t,x,s)n(s)\, \dd s}{s-z},\qquad \Lambda\le\infty,
\]
is analytic for $z\in\mC\setminus[-\Lambda,\Lambda]$, and for $\la\in [-\Lambda,\Lambda]$ it has the continuous boundary values
\[
G_\pm(t,x,\la)= \frac{1}{4}\int_{-\Lambda}^\Lambda \frac{F(t,x,s) n(s)}{s-\la\mp\ii 0}\, \dd s
\]
on the left (``${}_+$'') and right (``${}_-$'') of the contour $[-\Lambda,\Lambda]$ oriented from the left to the right.

\emph{First consider the case when $\Lambda<\infty$ $(\Lambda>0)$ and $\la\in \R\setminus[-\Lambda,\Lambda]$}. For $\la\in \R\setminus[-\Lambda,\Lambda]$ the $x^\pm$-equations~\eqref{pmxeq} are the same and take the form~\eqref{xeq}, i.e.,
\begin{equation}\label{DE_Wx}
W_x=(\ii\la\sigma_3 +H(t,x)-\ii G(t,x,\la))W.
\end{equation}
We rewrite the equation~\eqref{DE_Wx} in the form
\[
 W_x=\left(\ii\left[\eta(\la)-\frac{1}{4} \int_{-\Lambda}^\Lambda\frac{[{\mathcal N}(t,x,s)+1]n(s)}{s-\la}\, \dd s \right]\sigma_3+\widehat H(t,x,\la)\right)W,\qquad \la\in \mathbb{R}\setminus[-\Lambda, \Lambda],
\]
where $\eta(\la)$ has the form~\eqref{eta} ($z=\la\in\R\setminus[-\Lambda,\Lambda]$, $\Lambda<\infty$), and
\[
\widehat H(t,x,\la)=
 \begin{pmatrix} 0 & \displaystyle\frac{1}{2}{\mathcal E}(t,x) - \frac{\ii}{4}\int_{-\Lambda}^\Lambda \frac{\rho(t,x,s)n(s)}{s-\la}\, \dd s \\
\displaystyle -\frac{1}{2}\overline{\mathcal E(t,x)} - \frac{\ii}{4}\int_{-\Lambda}^\Lambda \frac{\overline{\rho(t,x,s)}n(s)}{s-\la}\, \dd s & 0 \end{pmatrix}.
\]
Further let us put $\widehat W={\rm e}^{-\ii\alpha\sigma_3}W$, where
 \begin{gather}
\alpha=\alpha(t,x,\la)= -\frac{1}{4}\int_0^x\int_{-\Lambda}^\Lambda \frac{[{\mathcal N}(t,y,s)+1] n(s)}{s-\la}\, \dd s \dd y \nonumber\\
\hphantom{\alpha=\alpha(t,x,\la)}{}
= -\frac{1}{4}\int_{-\Lambda}^\Lambda \frac{\widehat{{\mathcal N}}(t,x,s)n(s)}{s-\la}\, \dd s,\qquad \la\in \R\setminus[-\Lambda,\Lambda], \nonumber\\
\widehat{\mathcal N}(t,x,s)=\int_0^x [{\mathcal N}(t,y,s)+1]\, \dd y.\label{widehat_N}
 \end{gather}
For $\widehat W$ the equation~\eqref{DE_Wx} takes the form
\[
\widehat W_x=\big[ \ii\eta(\la)\sigma_3+ \widetilde H(t,x,\la) \big] \widehat W,\qquad \widetilde H(t,x,\la)={\rm e}^{-\ii\alpha\sigma_3}\widehat H(t,x,\la) {\rm e}^{\ii\alpha\sigma_3}.
\]
This equation is equivalent to the matrix integral equation:
 \begin{equation}\label{widetilde_W}
\widetilde W(t,x,\la)=I+\int_0^x
{\rm e}^{\ii\eta(\la)(x-y)\sigma_3}\widetilde H(t,y,\la) \widetilde W(t,y,\la) {\rm e}^{-\ii\eta(\la)(x-y)\sigma_3} \dd y,
 \end{equation}
where
\[
\widetilde W(t,x,\la)=\widehat W(t,x,\la) {\rm e}^{-\ii\eta(\la)x\sigma_3}= {\rm e}^{-\ii\alpha(t,x,\la)\sigma_3}W(t,x,\la) {\rm e}^{-\ii\eta(\la)x\sigma_3}.
\]
From~\eqref{widetilde_W}, we obtain the vector integral equations \big(where $\widetilde W[i](t,y,\la)$ is the $i$-th column of the matrix $\widetilde W(t,y,\la)$\big)
 \begin{gather*}
\widetilde W[1](t,x,\la)=\begin{pmatrix}
1\\0\end{pmatrix} + \int_0^x
\begin{pmatrix} 0&\widetilde H_{12}(t,y,\la) \\
\widetilde H_{21}(t,y,\la){\rm e}^{-2\ii\eta(\la)(x-y)}&0 \end{pmatrix}
\widetilde W[1](t,y,\la)\, \dd y, \\ 
\widetilde W[2](t,x,\la)=\begin{pmatrix}
0\\1\end{pmatrix} + \int_0^x
\begin{pmatrix} 0&\widetilde H_{12}(t,y,\la) {\rm e}^{2\ii\eta(\la)(x-y)} \\
\widetilde H_{21}(t,y,\la)&0 \end{pmatrix}
\widetilde W[2](t,y,\la)\, \dd y. 
 \end{gather*}

Let us prove that there exists a solution $\widetilde W(t,x,\la)$ of the integral equation~\eqref{widetilde_W} and it is continuously differentiable in $t$,~$x$ and has an analytic continuation to $\mC\setminus[-\Lambda,\Lambda]$. To do this, consider the Neumann series $\sum_{n=0}^\infty \widetilde W_n(t,x,\la)$, where
\[
 \widetilde W_0(t,x,\la)\equiv I,\qquad \widetilde W_n(t,x,\la)=\int_0^x
 {\rm e}^{\ii\eta(\la)(x-y)\sigma_3}\widetilde H(t,y,\la) \widetilde W_{n-1}(t,y,\la) {\rm e}^{-\ii\eta(\la)(x-y)\sigma_3} \dd y,
\]
$n=1,2,\dots $, and $t\in\R_+$, $x\in\R_+$, $\la\in\R\setminus[-\Lambda,\Lambda]$. Denote
\[
\Upsilon(t,x,\la)= \begin{pmatrix}
 0 & \displaystyle - \frac{\ii}{4}\int_{-\Lambda}^\Lambda \frac{\rho(t,x,s)n(s)}{s-\la}\, \dd s \\ \displaystyle -\frac{\ii}{4}\int_{-\Lambda}^\Lambda \frac{\overline{\rho(t,x,s)}n(s)}{s-\la}\, \dd s & 0 \end{pmatrix},
\]
then ${\widehat H(t,x,\la)=H(t,x)+\Upsilon(t,x,\la)}$.

Recall that $|{\mathcal N}(t,x,\la)|\le 1$ and $|\rho(t,x,\la)|\le 1$ for all $(t,x,\la)\in \R_+\times\R_+\times\R$ due to~\eqref{rhoN} and that
${\mathcal N}(t,x,\la)+1\to0$ fast enough as $x\to\infty$ so that $[{\mathcal N}(t,x,\la)+1]\in L^1(\R_+)$ with respect to~$x$ for all $t\in\R_+$, $\la\in\R$. Therefore,
\begin{equation}\label{Bound-widehat_N}
0\le\widehat{\mathcal N}(t,x,\la)\le \widehat{K}={\rm const}
\end{equation}
for all $t,x\in\R_+$, $\la\in \R$, where $\widehat{\mathcal N}$ is defined in~\eqref{widehat_N}.
It follows from the properties of the functions $n$, $\mathcal N$, $\widehat{\mathcal N}$ and $\rho$ that $\alpha$, $\Upsilon$, $\alpha_t$ and $\Upsilon_t$ are continuous for $(t,x,\la)\in \R_+\times\R_+\times \R\setminus[-\Lambda,\Lambda]$, and also that $\alpha$, $\Upsilon$ are bounded for all $t, x\in\R_+$ and $\la\in(-\infty,-\Lambda_0]\cup[\Lambda_0,+\infty)$, where ${\Lambda_0>\Lambda}$ is an arbitrary number. Obviously, $\big\|{\rm e}^{\pm\ii\eta(\la)(x-y)\sigma_3}\big\|\le C_1={\rm const}$ and $\big\|{\rm e}^{\pm\ii\alpha(t,x,\la)\sigma_3}\big\|\le C_2={\rm const}$ for $t, x\in \R_+$ and $\la\in \R\setminus[-\Lambda,\Lambda]$. Here we use the operator (matrix) norm subordinate to the vector norm $\|x\|_\infty=\sup_{1\le k\le 2}|x_k|$, $x\in \mC^2$. In general, by $\|\cdot\|$ we denote any suitable norm (such that $\|I\|=1$), unless it is specified exactly which norm is considered.

Since ${\mathcal E}(t,x)\in C^1(\R_+\times\R_+)$ is bounded near $(0,0)$, i.e., bounded on any compact set $[0,L_0]\times [0,T_0]$ in $[0,\infty)\times [0,\infty)$, then there exists a constant $C_3=C_3(\Lambda_0)$ such that $\big\|\widehat H(t,x,\la)\big\|\le C_3$ for all $t\in [0,T_0]$, $x\in[0,L_0]$ and $\la\in(-\infty,-\Lambda_0]\cup[\Lambda_0,+\infty)$, where $T_0>0$, $L_0>0$ and $\Lambda_0>\Lambda$ are arbitrary numbers. In addition, we obtain that ${\mathcal E}(t,x)\in L^1[a,b]$, $0\le a\le b<\infty$, with respect to $x$ for each $t\in\R_+$ and with respect to $t$ for each $x\in\R_+$

It follows from the above that $\big\|\widetilde W_0(t,x,\la)\big\|\equiv \|I\|=1$ and
\[
\big\|\widetilde W_n(t,x,\la)\big\|\le C \int_0^x \big\|\widetilde W_{n-1}(t,y,\la)\big\|\,\dd y,
\]
where $C=C_1^2 C_2^2 C_3$, $t\in [0,T_0]$, ${T_0>0}$, $x\in [0,L_0]$, $L_0>0$, and $\la\in(-\infty,-\Lambda_0]\cup[\Lambda_0,+\infty)$, $\Lambda_0>\Lambda$, and using the method of mathematical induction (MMI) it is easy to prove that $\big\|\widetilde W_n(t,x,\la)\big\|\le \frac{C^n x^n}{n!}$.
Consequently, $\big\|\widetilde W(t,x,\la)\big\|\le \sum_{n=0}^\infty \big\|\widetilde W_n(t,x,\la)\big\|\le \sum_{n=0}^\infty \frac{C^n x^n}{n!}={\rm e}^{Cx}$ and hence the series $\sum_{n=0}^\infty \widetilde W_n(t,x,\la)$ converges absolutely and uniformly for all $t\in [0,T_0]$, $x\in [0,L_0]$ and $\la\in(-\infty,-\Lambda_0]\cup[\Lambda_0,+\infty)$, and in view of the arbitrariness of $L_0$, $T_0$ and $\Lambda_0$ it converges locally uniformly for all $t, x\in\R_+$ and $\la\in\R\setminus[-\Lambda,\Lambda]$.
Consequently, the sum $\widetilde W(t,x,\la)=\sum_{n=0}^\infty \widetilde W_n(t,x,\la)$ is a solution of the equation~\eqref{widetilde_W}, at that, the function $\widetilde W(t,x,\la)$ is continuous on $\R_+\times\R_+\times \R\setminus[-\Lambda,\Lambda]$ (since $W_n(t,x,\la)$ is continuous on this set) and continuously differentiable in $x$ \big(since $\widetilde W(t,x,\la)$ is a solution of the integral equation~\eqref{widetilde_W}\big).
Moreover, since $\eta(\la)$, $\alpha(t,x,\la)$ and $\Upsilon(t,x,\la)$ \big(and therefore $\widehat H(t,x,\la)$\big) have analytic continuations to ${\mC\setminus[-\Lambda,\Lambda]}$, then $\widetilde W(t,x,\la)$ has an analytic continuation $\widetilde W(t,x,z)$ for $z\in\mC\setminus[-\Lambda,\Lambda]$.
From the properties of the functions $\mathcal E$, $\mathcal N$, $\rho$ as a solution of the considered IBVP~\eqref{MB1a}--\eqref{MBboundAe} it follows that $\mathcal E_t(t,x)$, $\mathcal N_t(t,x,\la)$ and $\rho_t(t,x,\la)$ are bounded for $(t,x)$ close to $(0,0)$ and $\la$ from any compact set in ${\R\setminus[-\Lambda,\Lambda]}$. Then
\[
\alpha_t(t,x,\la)=-\frac{1}{4}\int_{-\Lambda}^\Lambda \frac{\widehat{\mathcal N}_t(t,x,s)n(s)}{s-\la}\, \dd s, \qquad \widehat H_t(t,x,\la)=H_t(t,x)+\Upsilon_t(t,x,\la),
\]
where
\[
\Upsilon_t= \left(\begin{matrix} 0 & \displaystyle-\frac{\ii}{4}\int_{-\Lambda}^\Lambda \frac{\rho_t(t,x,s)n(s)}{s-\la}\, \dd s \\ \displaystyle-\frac{\ii}{4}\int_{-\Lambda}^\Lambda \frac{\overline{\rho_t(t,x,s)}n(s)}{s-\la}\, \dd s & 0 \end{matrix}\right),
\]
and
\[
\widetilde H_t(t,x,\la)={\rm e}^{-\ii\alpha(t,x,\la)\sigma_3}\big(\widehat H_t(t,x,\la)+\ii\alpha_t(t,x,\la)\big[\widehat H(t,x,\la),\sigma_3\big]\big){\rm e}^{\ii\alpha(t,x,\la)\sigma_3}
\]
 are bounded on any compact set in $\R_+\times [0,\infty)\times \R\setminus[-\Lambda,\Lambda]$. Denote $\big(\widetilde W_n\big)_t= \frac{\partial \widetilde W_n}{\partial t}$. Obviously, $\big(\widetilde W_n\big)_t(t,x,\la)$ ($n=0,1,\dots $) are continuous on $\R_+\times\R_+\times\R\setminus[-\Lambda,\Lambda]$. In the same way as above, we prove that the series $\sum_{n=0}^\infty \big(\widetilde W_n\big)_t$ converges absolutely and locally uniformly on $\R_+\times\R_+\times \R\setminus[-\Lambda,\Lambda]$. Consequently, $\widetilde W(t,x,\la)$ is continuously differentiable in $t$.

 \medskip
\emph{Now consider the case when $\Lambda\le+\infty$ and $\la\in[-\Lambda,\Lambda]$} (recall that we set $[-\Lambda,\Lambda]=\R$ for $\Lambda=\infty$).

The $x^\pm$-equations~\eqref{pmxeq} (for $W_\pm(t,x,\la)$ from~\eqref{Y}), as above, can be written in the form
\[
(W_\pm)_x=\left(\ii\left[\eta_\pm(\la)-\frac{1}{4} \int_{-\Lambda}^\Lambda\frac{[{\mathcal N}(t,x,s)+1]n(s)}{s-\la\mp\ii0}\, \dd s \right]\sigma_3+\widehat H_\pm(t,x,\la)\right)W_\pm,
\]
where $\la\in[-\Lambda,\Lambda]$, $\Lambda\le\infty$, $\eta_\pm(\la)$ are the boundary values defined in~\eqref{eta_pm}, and
\begin{equation*}
 \widehat H_\pm(t,x,\la)= \begin{pmatrix}
 0 & \displaystyle\frac{1}{2}{\mathcal E}(t,x) - \frac{\ii}{4}\int_{-\Lambda}^\Lambda \frac{\rho(t,x,s)n(s)}{s-\la\mp\ii0}\, \dd s \\
\displaystyle -\frac{1}{2}\overline{{\mathcal E}(t,x)} - \frac{\ii}{4}\int_{-\Lambda}^\Lambda \frac{\overline{\rho(t,x,s)}n(s)}{s-\la\mp\ii0}\, \dd s & 0 \end{pmatrix}.
\end{equation*}
Let us put $\widehat W_\pm={\rm e}^{-\ii\alpha_\pm\sigma_3}W_\pm$, where
\begin{equation*}
 \alpha_\pm=\alpha_\pm(t,x,\la)= -\frac{1}{4}\int_0^x\int_{-\Lambda}^\Lambda \frac{[{\mathcal N}(t,y,s)+1] n(s)}{s-\la\mp\ii0}\, \dd s\dd y = -\frac{1}{4}\int_{-\Lambda}^\Lambda \frac{\widehat{{\mathcal N}}(t,x,s)n(s)}{s-\la\mp\ii0}\, \dd s,
\end{equation*}
and $\widehat{\mathcal N}(t,x,s)$ has the form~\eqref{widehat_N}. Since
\[
(\alpha_\pm)_x=-\frac{1}{4}\int_{-\Lambda}^\Lambda \frac{[{\mathcal N}(t,x,s)+1] n(s)}{s-\la\mp\ii0}\, \dd s,
\]
 then
\begin{gather*}
(\widehat W_\pm)_x=\big[ \ii\eta_\pm(\la)\sigma_3 + \widetilde H_\pm(t,x,\la) \big] \widehat W_\pm,\\ \widetilde H_\pm(t,x,\la)= {\rm e}^{-\ii\alpha_\pm(t,x,\la)\sigma_3}\widehat H_\pm(t,x,\la){\rm e}^{\ii\alpha_\pm(t,x,\la)\sigma_3}.
\end{gather*}
These equations are equivalent to the matrix integral equations
\begin{equation}\label{widetilde_W-pm}
\widetilde W_\pm(t,x,\la)=I+\int_0^x
{\rm e}^{\ii\eta_\pm(\la)(x-y)\sigma_3} \widetilde H_\pm(t,y,\la) \widetilde W_\pm(t,y,\la) {\rm e}^{-\ii\eta_\pm(\la)(x-y)\sigma_3} \dd y,
\end{equation}
where
\begin{gather*}
\widetilde W_\pm(t,x,\la)=\widehat W_\pm(t,x,\la) {\rm e}^{-\ii\eta_\pm(\la)x\sigma_3}= {\rm e}^{-\ii\alpha_\pm(t,x,\la)\sigma_3}W_\pm(t,x,\la) {\rm e}^{-\ii\eta_\pm(\la)x\sigma_3},\\
\widetilde W_\pm(t,0,\la)=\widehat W_\pm(t,0,\la)=W_\pm(t,0,\la)=I.
\end{gather*}
From~\eqref{widetilde_W-pm}, we obtain the vector integral equations
\begin{gather*}
\widetilde W_\pm[1](t,x,\la)= \begin{pmatrix}1\\0\end{pmatrix} + \int_0^x
 \begin{pmatrix} 0& \widehat H_{\pm 12} {\rm e}^{-2\ii\alpha_\pm} \\
\widehat H_{\pm 21} {\rm e}^{2\ii\alpha_\pm} {\rm e}^{-2\ii\eta_\pm(\la)(x-y)}&0
 \end{pmatrix} \widetilde W_\pm[1](t,y,\la)\, \dd y, 
\\
\widetilde W_\pm[2](t,x,\la)=\begin{pmatrix}0\\1\end{pmatrix} +\int_0^x
\begin{pmatrix} 0& \widehat H_{\pm 12} {\rm e}^{-2\ii\alpha_\pm} {\rm e}^{2\ii\eta_\pm(\la)(x-y)} \\
\widehat H_{\pm 21} {\rm e}^{2\ii\alpha_\pm}&0 \end{pmatrix} \widetilde W_\pm[2](t,y,\la)\, \dd y. 
\end{gather*}

Let us show that solutions of the integral equations~\eqref{widetilde_W-pm} exist and they satisfy the required properties. Solutions of the equations~\eqref{widetilde_W-pm} can be represented as the Neumann series $\widetilde W_\pm(t,x,\la)=\sum_{n=0}^\infty\widetilde W_{\pm, n} (t,x,\la)$, where
\begin{gather*}
 \widetilde W_{\pm, 0}(t,x,\la)\equiv I,\\
 \widetilde W_{\pm, n}(t,x,\la)= \int_0^x
 {\rm e}^{\ii\eta_\pm(\la)(x-y)\sigma_3}\widetilde H_\pm(t,y,\la) \widetilde W_{\pm, n-1}(t,y,\la) {\rm e}^{-\ii\eta_\pm(\la)(x-y)\sigma_3} \dd y.
\end{gather*}
Introduce the matrix{\samepage
\[
\Upsilon_\pm(t,x,\la)= \begin{pmatrix}
 0 & \displaystyle -\frac{\ii}{4}\int_{-\Lambda}^\Lambda \frac{\rho(t,x,s)n(s)}{s-\la\mp\ii0}\, \dd s \\
 \displaystyle-\frac{\ii}{4}\int_{-\Lambda}^\Lambda \frac{\overline{\rho(t,x,s)}n(s)}{s-\la\mp\ii0}\, \dd s & 0 \end{pmatrix},
\]
then $\widehat H_\pm(t,x,\la)=H(t,x)+\Upsilon_\pm(t,x,\la)$.}

Obviously, $n(\la)$ is bounded on $\R$ and hence $\Im\eta_\pm(\la)=\pm\frac{\pi}{4}n(\la)$ are bounded on $\R$. Consequently, for $y\in [0,x]$, $x\in \R_+$ we obtain
\begin{gather*}
\big|{\rm e}^{\ii\eta_+(\la)(x-y)}\big|=\big|{\rm e}^{-\ii\eta_-(\la)(x-y)}\big|= {\rm e}^{-\pi/4 n(\la)(x-y)}\le 1,
\\
\big|{\rm e}^{-\ii\eta_+(\la)(x-y)}\big|=\big|{\rm e}^{\ii\eta_-(\la)(x-y)}\big|= {\rm e}^{\pi/4 n(\la)(x-y)}\le {\rm e}^{\pi/4 n(\la)x}\le {\rm e}^{Nx}, \qquad N={\rm const}>0.
\end{gather*}

Since $|\rho(t,x,\la)|\le 1$ for all $(t,x,\la)$, then, due to the properties of $n(\la)$ indicated in conditions~\ref{Lambda_infty} and~\ref{Lambda_finite}, where~\ref{Lambda_finite1} holds, the function $\rho(t,x,\la)n(\la)$ satisfies the H\"{o}lder condition uniformly with respect to $t$,~$x$ for $\la\in[-\Lambda,\Lambda]$ ($\Lambda\le\infty$) and, if $\Lambda=\infty$ (i.e., $[-\Lambda,\Lambda]=\R$), in a neighborhood of infinity.
If condition~\ref{Lambda_finite2} holds, i.e., $n(\la)$ has the form~\eqref{Ex.n-const}, then $\rho(t,x,\la)n(\la)=\frac{1}{2\Lambda}\rho(t,x,\la)$ for $\la\in(-\Lambda,\Lambda)$. Note that $\rho(t,x,\la)$ satisfies the Lipschitz condition in $\la$ on $[-\Lambda,\Lambda]$. Moreover, $\rho(t,x,\pm\Lambda)n(\pm\Lambda)\equiv 0$ ($\Lambda\le\infty$) and $\rho(t,x,\la)n(\la)$ belongs to $L^1[-\Lambda,\Lambda]$ with respect to $\la$ under both conditions~\ref{Lambda_infty} and~\ref{Lambda_finite}.
Consequently, there exists the Cauchy integral $\int_{-\Lambda}^\Lambda \frac{\rho(t,x,s)n(s)}{s-z}\, \dd s$ which is analytic on $\mC\setminus[-\Lambda,\Lambda]$ and continuous up to $[-\Lambda,\Lambda]$ (i.e., continuously
 extendible to $[-\Lambda,\Lambda]$ from the left and
from the right: $\int_{-\Lambda}^\Lambda \frac{\rho(t,x,s)n(s)}{s-\la\mp\ii0}\, \dd s <\infty$, $\la=\Re z\in[-\Lambda,\Lambda]$) as a function of the variable $z$ with the parameters $t$,~$x$, and $\int_{-\Lambda}^\Lambda \frac{\rho(t,x,s)n(s)}{s-z}\, \dd s=O\big(z^{-1}\big)$, $z\to\infty$.
The same is true for the integral with $\overline{\rho(t,x,s)}$.
Thus, there exists the function $\Upsilon(t,x,z)$ \big($\widehat H(t,x,z)$ respectively\big), analytic in $z$ on $\mC\setminus[-\Lambda,\Lambda]$ and continuous up to $[-\Lambda,\Lambda]$, which has the boundary values $\Upsilon_\pm(t,x,\la)$ \big($\widehat H_\pm(t,x,\la)$ respectively\big) from the left/right of $[-\Lambda,\Lambda]$ and the asymptotics $\Upsilon(t,x,z)=O\big(z^{-1}\big)$ as $z\to\infty$, and evidently this function coincides with the analytic continuation of $\Upsilon(t,x,\la)$ mentioned above for the case $\Lambda<\infty$, $\la\in\R\setminus[-\Lambda,\Lambda]$.
Note that
 ($\Upsilon_\pm(t,x,\la)=O(|\la|^{-\mu})$, $|\la|\to+\infty$)
\[
\|\Upsilon_\pm(t,x,\la)\|\le \upsilon,\qquad \upsilon={\rm const}>0,\qquad (t,x,\la)\in\R_+\times\R_+\times[-\Lambda,\Lambda],\qquad \Lambda\le\infty.
\]

In the same way as above, using the properties of ${\mathcal N}(t,x,\la)$ \big(and, accordingly, $\widehat{\mathcal N}(t,x,\la)$\big), the estimate~\eqref{Bound-widehat_N} and the properties of $n(\la)$ indicated in conditions~\ref{Lambda_infty} and~\ref{Lambda_finite}, we obtain that the function
\begin{gather*}
 \alpha(t,x,z)= -\frac{1}{4}\int_0^x\int_{-\Lambda}^\Lambda \frac{[{\mathcal N}(t,y,s)+1] n(s)}{s-z}\, \dd s\dd y = -\frac{1}{4}\int_{-\Lambda}^\Lambda \frac{\widehat{{\mathcal N}}(t,x,s)n(s)}{s-z}\, \dd s,\qquad \Lambda\le \infty,
\end{gather*}
is analytic for $z\in\mC\setminus[-\Lambda,\Lambda]$ and continuous up to $[-\Lambda,\Lambda]$ (evidently, this function coincides with the analytic continuation of $\alpha(t,x,\la)$ mentioned above for the case $\Lambda<\infty$, $\la\in\R\setminus[-\Lambda,\Lambda]$), and $\alpha(t,x,z)=O\big(z^{-1}\big)$ as $z\to\infty$. Its boundary values $\alpha_\pm(t,x,\la)=\alpha(t,x,\la\pm\ii0)$ are equal to
\[
\alpha_\pm(t,x,\la)= -\frac{1}{4} {\rm p.v.}\int_{-\Lambda}^\Lambda \frac{\widehat{{\mathcal N}}(t,x,s)n(s)}{s-\la}\, \dd s \mp\frac{\pi\ii}{4}\widehat{\mathcal N}(t,x,\la)n(\la), \qquad \la\in [-\Lambda,\Lambda].
\]
In addition,
\begin{equation*}
 \Im\alpha(t,x,z)= -\frac{\Im z}{4}\int_{-\Lambda}^\Lambda \frac{\widehat{\mathcal N}(t,x,s)n(s)}{(s-\la)^2+\Im^2 z}\, \dd s,\qquad z\to\infty.
\end{equation*}
Since $\Im\alpha_\pm(t,x,\la)= \mp\frac{\pi}{4}\widehat{\mathcal N}(t,x,\la)n(\la)$, then
\begin{gather*}
 \big|{\rm e}^{\ii\alpha_+(t,x,\la)}\big|=\big|{\rm e}^{-\ii\alpha_-(t,x,\la)}\big|= {\rm e}^{\pi\widehat{\mathcal N}(t,x,\la)n(\la)/4}\le {\rm e}^{\pi\widehat{K}n(\la)/4},
\\
 \big|{\rm e}^{-\ii\alpha_+(t,x,\la)}\big|=\big|{\rm e}^{\ii\alpha_-(t,x,\la)}\big|={\rm e}^{-\pi\widehat{\mathcal N}(t,x,\la)n(\la)/4}\le 1.
\end{gather*}
Thus, $\big\|{\rm e}^{\pm\ii\alpha_\pm(t,x,\la)\sigma_3}\big\|\le e^K$, where $K={\rm const}>0$, and $\big\|{\rm e}^{\pm\ii\eta_\pm(\la)(x-y)\sigma_3}\big\|\le {\rm e}^{Nx}$ for each $t\in \R_+$, $y\in [0,x]$, $x\in \R_+$, $\la\in[-\Lambda,\Lambda]$ ($\Lambda\le\infty$). Note that ${\rm e}^{\pm \ii\alpha(t,x,z)}=1+O\big(z^{-1}\big)$, $z\to\infty$ \big(${\rm e}^{\pm\ii\alpha_\pm(t,x,\la)}\to 1$, $|\la|\to\infty$\big).

It follows from the above that $\big\|\widetilde W_{\pm, 0}(t,x,\la)\big\|\equiv \|I\|=1$ and
 \begin{equation*}
\big\|\widetilde W_{\pm, n}(t,x,\la)\big\|\le {\rm e}^{2(N L_0+K)}\int_0^x (\|H(t,y)\|+\upsilon) \big\|\widetilde W_{\pm, n-1}(t,y,\la)\big\| \dd y,\qquad n=1,2,\dots,
 \end{equation*}
for all $t\in (0,T_0)$, $x\in (0,L_0)$ and $\la\in(-\Lambda_0,\Lambda_0)$, where $T_0, L_0, \Lambda_0>0$ ($\Lambda_0\le\Lambda$ for $\Lambda<\infty$ and $\Lambda_0<\infty$ for $\Lambda=\infty$) are arbitrary numbers.
Then
\[
\big\|\widetilde W_{\pm, 1}(t,x,\la)\big\|\le {\rm e}^{2(N L_0+K)} \int_0^x(\|H(t,y)\|+\upsilon)\, \dd y = {\rm e}^{2(N L_0+K)} S(t,x),
\]
 where $S(t,x)=\int_0^x(\|H(t,y)\|+\upsilon)\, \dd y$, and using the MMI, we obtain \[
 \big\|\widetilde W_{\pm, n}(t,x,\la)\big\|\le \frac{{\rm e}^{2n(NL_0+K)}S^n(t,x)}{n!}.
 \]
Consequently,
\[
\big\|\widetilde W_\pm(t,x,\la)\big\|=\bigg\|\sum_{n=0}^\infty \widetilde W_{\pm, n}(t,x,\la)\bigg\|\le \exp\big\{{\rm e}^{2(NL_0+K)}S(t,x)\big\}.
\]
Due to the properties of $\mathcal E(t,x)$, the function $S(t,x)$ is bounded on any compact set in $\R_+\times\R_+$. Thus, the series $\sum_{n=0}^\infty \widetilde W_{\pm, n}(t,x,\la)$ converge absolutely and locally uniformly for $(t,x,\la)\in\R_+\times\R_+\times[-\Lambda,\Lambda]$, $\Lambda\le\infty$.
Since the functions $\widetilde W_{\pm, n}(t,x,\la)$ are continuous and the series converge locally uniformly on $\R_+\times\R_+\times[-\Lambda,\Lambda]$, then the functions (the sums of the series) $\widetilde W_\pm(t,x,\la)$ are continuous on $\R_+\times\R_+\times[-\Lambda,\Lambda]$, $\Lambda\le\infty$. The continuous differentiability of $\widetilde W_\pm(t,x,\la)$ with respect to $t$ and $x$ is proved in the same way as for $\widetilde W(t,x,\la)$, $\la\in\R\setminus[-\Lambda,\Lambda]$ (see above).

Thus, the existence of the solutions $Y_\pm(t,x,\la)$~\eqref{Y} and property~\ref{Differ} for them have been proved. Further, we will prove~\ref{AsymptY}.

It also follows from the above that there exists the function (analytic continuation) $\widetilde W(t,x,z)$, analytic in $z$ on $\mC\setminus[-\Lambda,\Lambda]$ and continuous up to the boundary $[-\Lambda,\Lambda]$, which satisfies the integral equation~\eqref{widetilde_W} for $z=\la\in\R\setminus[-\Lambda,\Lambda]$, $\Lambda<\infty$, and has the boundary values $\widetilde W_\pm(t,x,\la)$ satisfying the integral equations~\eqref{widetilde_W-pm} for $\la\in[-\Lambda,\Lambda]$, $\Lambda\le\infty$ \big(also, $\widetilde W(t,x,z)$ is continuously differentiable in $t$, $x$ for each $z$\big). For $\Lambda<\infty$, we extend the functions $\widetilde W_\pm(t,x,\la)$ by defining them equal to $\widetilde W_\pm(t,x,\la):=\widetilde W(t,x,\la)$ for $\la\in\R\setminus[-\Lambda,\Lambda]$ \big(accordingly, in a similar way we define all the boundary values included in $\widetilde W_\pm(t,x,\la)$, i.e., $\alpha_\pm(t,x,\la):=\alpha(t,x,\la)$, $\eta_\pm(\la):=\eta(\la)$, etc., for $\la\in\R\setminus[-\Lambda,\Lambda]$, $\Lambda<\infty$\big). Thus, the function $W(t,x,z)= {\rm e}^{\ii\alpha(t,x,z)\sigma_3}\widetilde W(t,x,z){\rm e}^{\ii\eta(z)x\sigma_3}$ is analytic in $z$ on $\mC\setminus[-\Lambda,\Lambda]$ and continuous up to $[-\Lambda,\Lambda]$, it is continuously differentiable in $t$, $x$ (for each $z\in\mC$) and its boundary values $W_\pm(t,x,\la)={\rm e}^{\ii\alpha_\pm(t,x,\la)\sigma_3}\widetilde W_\pm(t,x,\la){\rm e}^{\ii\eta_\pm(\la)x\sigma_3}$ ($W_+(t,x,\la)=W_-(t,x,\la)=W(t,x,\la)$ for $\la\in\R\setminus[-\Lambda,\Lambda]$, $\Lambda<\infty$) satisfy the $x^\pm$-equations~\eqref{pmxeq} for each $t\in\R_+$ and $W_\pm(t,0,\la)=I$. Denote by
\[
\widetilde W^\pm(t,x,z):=\begin{cases} \widetilde W(t,x,z), & z\in\mC_\pm=\{z\in \mC\mid \pm\Im z>0\}, \\ \widetilde W_\pm(t,x,z), & z=\la\in\R \end{cases}
\]
 ($\widetilde W^+(t,x,z)=\widetilde W^-(t,x,z)=\widetilde W(t,x,z)$, $z=\la\in\R\setminus[-\Lambda,\Lambda]$, $\Lambda<\infty$) and introduce the notation $W^\pm(t,x,z)$, $z\in\mC_\pm\cup\R$ in a similar way.
Since ${\rm e}^{\pm\ii\eta(z)x}\to 0$ for $z\in\mC_\pm$ (respectively), $x\in\R_+$ as $z\to\infty$ (or $x\to+\infty$), then the columns $\widetilde W^-[1](t,x,z)$ and $\widetilde W^+[2](t,x,z)$ are bounded in $z\in\mC_-^{\rm cl}=\mC_-\cup\R$ and $z\in\mC_+^{\rm cl}=\mC_+\cup\R$, respectively. Further, taking into account
\begin{gather*}
\begin{split}
 & W^-[1](t,x,z){\rm e}^{-\ii\eta(z)x}={\rm e}^{\ii\alpha(t,x,z)\sigma_3}\widetilde W^-[1](t,x,z), \\
 & W^-[2](t,x,z){\rm e}^{-\ii\eta(z)x}={\rm e}^{\ii\alpha(t,x,z)\sigma_3}\widetilde W^-[2](t,x,z){\rm e}^{-2\ii\eta(z)x},
 \end{split}
\end{gather*}
where
\[
\widetilde W^-[2]{\rm e}^{-2\ii\eta x}= \begin{pmatrix} 0\\{\rm e}^{-2\ii\eta x} \end{pmatrix} + \int_0^x \begin{pmatrix} 0 & \widehat H_{12} {\rm e}^{-2\ii\alpha}{\rm e}^{-2\ii\eta y} \\
\widehat H_{21} {\rm e}^{2\ii\alpha}{\rm e}^{-2\ii\eta x} & 0 \end{pmatrix}
\widetilde W^-[2]\, \dd y
\]
(obviously, $\widehat H(t,x,z)=\widehat H_\pm(t,x,z)$, $\alpha(t,x,z)=\alpha_\pm(t,x,z)$ and $\eta(z)=\eta_\pm(z)$ for $z=\la\in\R$), we obtain that $W^-(t,x,z){\rm e}^{-\ii\eta(z)x}$ is bounded in $z\in\mC_-^{\rm cl}$, and as $z\to\infty$, $z\in\mC_-$, it has the asymptotics of the form
\begin{gather*}
W^-[1](t,x,z){\rm e}^{-\ii\eta(z)x}=\begin{pmatrix}1\\0\end{pmatrix} + O\big(z^{-1}\big),\\
W^-[2](t,x,z){\rm e}^{-\ii\eta(z)x}=\begin{pmatrix}0\\{\rm e}^{-2\ii\eta(z)x}\end{pmatrix} + O\big(z^{-1}\big),
\end{gather*}
and for fixed $x$ as $z\to\infty$, $z\in\mC_-$, it has the asymptotics
\[
W^-[1](t,x,z){\rm e}^{-\ii\eta(z)x}=\begin{pmatrix}1\\0\end{pmatrix} + O\big(z^{-1}\big),\qquad W^-[2](t,x,z){\rm e}^{-\ii\eta(z)x}=O\big(z^{-1}\big).
\]
It can be proved similarly that $W^+(t,x,z){\rm e}^{\ii\eta(z)x}$ is bounded in $z\in\mC_+^{\rm cl}$, and for fixed $x$ as $z\to\infty$, $z\in\mC_+$, it has the asymptotic behavior
\begin{gather*}
W^+[1](t,x,z){\rm e}^{\ii\eta(z)x}= \begin{pmatrix}{\rm e}^{2\ii\eta(z)x}\\0\end{pmatrix}+O\big(z^{-1}\big)= O\big(z^{-1}\big),\\
W^+[2](t,x,z){\rm e}^{\ii\eta(z)x}= \begin{pmatrix}0\\1\end{pmatrix}+O\big(z^{-1}\big).
\end{gather*}
Note that if the symbol $O(\cdot)$ is present in a matrix expression, then it denotes a matrix of the appropriate size whose entries have the indicated order.

The functions $\varkappa(z)$~\eqref{varkappa} and $w(z)$~\eqref{w-funct} are analytic in $\mC\setminus [E,\overline{E}]$, and, hence, the function $M_0(z)=\left(\begin{smallmatrix} a(z) & b(z) \\ b(z) & a(z) \end{smallmatrix}\right)$ defined by $a(z)$ and $b(z)$ of the form~\eqref{ab} is also analytic in $\mC\setminus \big[E,\overline{E}\big]$. Note that since $\varkappa (z)\to 1$ and $\varkappa^{-1}(z)\to 1$ as $z\to \infty$, then $M_0(z)\to I$ as $z\to \infty$. Thus, the function $\Phi(t,\la)$~\eqref{Phi_AKNS} has the analytic continuation $\Phi(t,z)$ for $z\in\mC\setminus\big[E,\overline{E}\big]$ (the orientation on $[E,\overline{E}]$ is chosen from up to down) which is continuous up to the boundary and has the singularities of the type $(z-E)^{-1/4}$, \smash{$\big(z-\overline{E}\big)^{-1/4}$} at the points $E$ and $\overline{E}$.
Since ${\det M_0(z)\equiv 1}$, then ${\det \Phi(t,z)\equiv 1}$. Taking into account that $w(z)=z-\Re E+O\big(z^{-1}\big)$, $z\to\infty$, for each fixed~$t$ we obtain the asymptotics:
\[
\Phi[1](t,z) {\rm e}^{\ii zt}=\begin{pmatrix} 1 \\ 0\end{pmatrix}+O\big(z^{-1}\big),\quad \Phi[2](t,z) {\rm e}^{-\ii zt}=\begin{pmatrix} 0 \\ 1\end{pmatrix}+O\big(z^{-1}\big),\qquad z\to\infty,\ z\in\mC.
\]

It follows from the above that the functions $Y_\pm(t,x,\la)$ have the analytic continuations $Y^\pm(t,x,z)$ for $z\in\mC_\pm^{\rm cl}\setminus\big([-\Lambda,\Lambda]\cup\big[E,\overline{E}\big]\big)$, where $\mC_\pm^{\rm cl}=\mC_\pm\cup\R$ ($Y^+(t,x,z)=Y^-(t,x,z)$ for $\la\in\R\setminus[-\Lambda,\Lambda]$, $\Lambda<\infty$) which are continuous up to $([-\Lambda,\Lambda]\setminus\{\Re E\})\cup[E,\Re E)$ and $([-\Lambda,\Lambda]\setminus\{\Re E\})\cup\big(\Re E,\overline{E}\big]$ respectively, bounded in the neighborhood of the point $\Re E$, have the singularities of the type $(z-E)^{-1/4}$ and $(z-\overline{E})^{-1/4}$ respectively, and in addition, for fixed $t$ and $x$,
\begin{gather*}
Y^-[1](t,x,z) {\rm e}^{\ii (zt-\eta(z)x)}=W^-(t,x,z) {\rm e}^{-\ii\eta(z)x} \Phi[1](t,z) {\rm e}^{\ii zt} = \begin{pmatrix} 1\\ 0\end{pmatrix}+ O\big(z^{-1}\big)
\end{gather*}
as $z\to\infty$, $z\in\mC_-^{\rm cl}$, and
\begin{gather*}
Y^+[2](t,x,z) {\rm e}^{-\ii (zt-\eta(z)x)}=W^+(t,x,z) {\rm e}^{\ii\eta(z)x} \Phi[2](t,z) {\rm e}^{-\ii zt} = \begin{pmatrix} 0\\ 1\end{pmatrix}+ O\big(z^{-1}\big)
\end{gather*}
as $z\to\infty$, $z\in\mC_+^{\rm cl}$.
Thus, the proof of~\ref{AsymptY} is completed.

To prove that there exists a solution $\Psi(t,x,\la)$ of the $t$-equation~\eqref{teq} (for each $x$) satisfying the initial condition ${\Psi(0,x,\la)\equiv I}$, we prove that there exists a solution $\widehat{\Psi}(t,x,\la)=\Psi(t,x,\la){\rm e}^{\ii\la t\sigma_3}$ of the equivalent integral equation
\begin{equation}\label{hatPsi}
 \widehat{\Psi}(t,x,\la)=I-\int_0^t {\rm e}^{-\ii \la (t-\tau)\sigma_3} H(\tau,x)\widehat{\Psi}(\tau,x,\la) {\rm e}^{\ii \la (t-\tau)\sigma_3}\dd\tau.
\end{equation}
We represent the solution of~\eqref{hatPsi} as the Neumann series $\widehat{\Psi}=\sum_{n=0}^\infty \widehat{\Psi}_n$, where ${\widehat{\Psi}_0(t,x,\la)\equiv I}$, $\widehat{\Psi}_n(t,x,\la)=-\int_0^t {\rm e}^{-\ii \la(t-\tau)\sigma_3} H(\tau,x)\widehat{\Psi}_{n-1}(\tau,x,\la) {\rm e}^{\ii \la (t-\tau)\sigma_3}\, \dd\tau$, and prove that the series converges. Since $\big|{\rm e}^{\pm\ii\la(t-\tau)}\big|\le 1$, then $\big\|\widehat{\Psi}_n(t,x,\la)\big\| \le \int_0^t \|H(\tau,x)\| \big\|\widehat{\Psi}_{n-1}(\tau,x,\la)\big\|\dd\tau$.
Hence, $\big\|\widehat{\Psi}_1(t,x,\la)\big\|\le \int_0^t \|H(\tau,x)\|\dd\tau =:M(t,x)$, and by the MMI we obtain $\big\|\widehat{\Psi}_n(t,x,\la)\big\|\le \frac{M^n(t,x)}{n!}$. Consequently, $\big\|\widehat{\Psi}(t,x,\la)\big\|\le \sum_{n=0}^\infty \big\|\widehat{\Psi}_n(t,x,\la)\big\|\le {\rm e}^{M(t,x)}$.
Since (due to the properties of $\mathcal E(t,x)$) $M(t,x)$ is bounded on any compact set in $\R_+\times\R_+$, then the series $\sum_{n=0}^\infty \widehat{\Psi}_n$ converges absolutely and uniformly in $\la$ on $\R$ and in $(t,x)$ on any compact set in $\R_+\times\R_+$. Since the functions $\widehat{\Psi}_n(t,x,\la)$ are continuous and the series converges locally uniformly on $\R_+\times\R_+\times\R$, then the sum of the series $\widehat{\Psi}(t,x,\la)$ is continuous on $\R_+\times\R_+\times\R$, and since $\widehat{\Psi}(t,x,\la)$ is a solution of the integral equation~\eqref{hatPsi}, then it is continuously differentiable in $t$. Obviously, $\widehat{\Psi}_n(t,x,\la)$ are continuously differentiable in $x$. It is easily verified that the series $\sum_{n=0}^\infty \big(\widehat{\Psi}_n\big)_x(t,x,\la)$ converges absolutely and locally uniformly on $\R_+\times\R_+\times\R$, and hence $\widehat{\Psi}(t,x,\la)$ is continuously differentiable in $x$.
The function $\Psi(t,x,\la)=\widehat{\Psi}(t,x,\la) {\rm e}^{-\ii\la t\sigma_3}$ has the analytic continuation $\Psi(t,x,z)$ for $z\in\mC$. Taking into account that $\widehat{\Psi}(t,x,\la)$ has the integral representation~\eqref{hatPsi} and that ${\rm e}^{\pm\ii z t}\to 0$ for $z\in\mC_\pm$ (respectively) as $z\to\infty$ ($t\in\R_+$), we obtain that the functions $\Psi(t,x,z) {\rm e}^{\ii zt}$ and $\Psi(t,x,z){\rm e}^{-\ii zt}$ are bounded in $z\in\mC_+^{\rm cl}$ and $z\in\mC_-^{\rm cl}$, respectively, and have the asymptotics
\begin{alignat*}{3}
& \Psi(t,x,z){\rm e}^{\ii z t}=\begin{pmatrix}1&0 \\ 0&{\rm e}^{2\ii zt} \end{pmatrix}+O\big(z^{-1}\big),\qquad && z\to\infty,\ z\in\mC_+^{\rm cl},&
\\
& \Psi(t,x,z){\rm e}^{-\ii zt}=\begin{pmatrix}{\rm e}^{-2\ii z t}&0\\0&1\end{pmatrix}
+O\big(z^{-1}\big),\qquad&&  z\to\infty, \ z\in\mC_-^{\rm cl}.&
\end{alignat*}
Similar results can be obtained by using the integral representation for $\Psi(t,x,\la)$ by a transformation operator (cf.~\cite{Filipkovska/Kotlyarov/Melamedova}).

Since $H(0,x)\equiv 0$ and $F(0,x,\la)\equiv -\sigma_3$, then for $t=0$ the $x^\pm$-equations~\eqref{pmxeq} take the form $w_x=\ii\eta_\pm(\la)\sigma_3$ for $\la\in\R$, where $\eta_\pm(\la)=\eta(\la)$ for $\la\in\R\setminus[-\Lambda,\Lambda]$ with ${0<\Lambda<\infty}$. These equations have the solutions $w_\pm(x,\la)={\rm e}^{\ii\eta_\pm(\la)x\sigma_3}$ satisfying the initial conditions
\[
\lim_{x\to\infty}w_\pm(x,\la){\rm e}^{-\ii\eta_\pm(\la)x\sigma_3}=I
\] (recall that we prove the theorem, assuming that $L=+\infty$ and $x\in (0,L)$). Obviously, $\det w_\pm(x,\la)\equiv 1$. Due to the properties of $\eta(z)$, there exists the function (analytic continuation) $w(x,z)={\rm e}^{\ii\eta(z)x\sigma_3}$, analytic in $z$ on $\mC\setminus[-\Lambda,\Lambda]$ and continuous up to $[-\Lambda,\Lambda]$, which satisfies the $x^\pm$-equations~\eqref{pmxeq} for $z=\la\in\R\setminus[-\Lambda,\Lambda]$ and have the boundary values $w_\pm(x,\la)$ satisfying~\eqref{pmxeq} for $\la\in[-\Lambda,\Lambda]$, and in addition,
\[
w[1](x,z){\rm e}^{-\ii\eta(z)x}=\begin{pmatrix}1\\0\end{pmatrix},\qquad w[2](x,z){\rm e}^{\ii\eta(z)x}=\begin{pmatrix}0\\1\end{pmatrix}.
\]

It follows from the above that the functions $Z_\pm(t,x,\la)$ have the analytic continuations $Z^\pm(t,x,z)$ for $z\in\mC_\pm^{\rm cl}\setminus[-\Lambda,\Lambda]$ respectively ($Z^+(t,x,z)=Z^-(t,x,z)$ for $\la\in\R\setminus[-\Lambda,\Lambda]$, $\Lambda<\infty$), which are continuous up to $[-\Lambda,\Lambda]$,
\begin{gather*}
Z^+[1](t,x,z){\rm e}^{\ii(zt-\eta(z)x)}=\Psi(t,x,z) {\rm e}^{\ii zt} w[1](x,z) {\rm e}^{-\ii\eta(z)x} = \begin{pmatrix}1\\0\end{pmatrix}+ O\big(z^{-1}\big)
\end{gather*}
as $z\to\infty$, $z\in\mC_+^{\rm cl}$, and
\begin{gather*}
Z^-[2](t,x,z){\rm e}^{-\ii(zt-\eta(z)x)}=\Psi(t,x,z) {\rm e}^{-\ii zt} w[2](x,z) {\rm e}^{\ii\eta(z)x} = \begin{pmatrix}0\\1\end{pmatrix}+ O\big(z^{-1}\big)
\end{gather*}
as $z\to\infty$, $z\in \mC_-^{\rm cl}$.

Thus, the existence of the solutions $Z_\pm(t,x,\la)$~\eqref{Z}, property~\ref{Differ} for them, and property~\ref{AsymptZ} have been proved.

Finally, we will prove~\ref{UnitDeterminant}. Since $Y_\pm(t,x,\la)$ and $Z_\pm(t,x,\la)$ are solutions of the compatible systems~\eqref{teq},~\eqref{pmxeq}, then $\det Y_\pm(t,x,\la)\equiv 1$ and $\det Z_\pm(t,x,\la)\equiv 1$. Indeed, let $P_\pm(t,x,\la)$ be arbitrary solutions of the compatible systems~\eqref{teq},~\eqref{pmxeq}, respectively. Then their determinants $\det P_\pm(t,x,\lambda)$ satisfy the systems
\begin{equation}\label{UVdet}
(\det P_\pm)_t=\tr U(t,x,\lambda)\det P_\pm,\qquad
(\det P_\pm)_x=\tr V_\pm(t,x,\lambda)\det P_\pm.
\end{equation}
Since $\tr U(t,x,\lambda)= \tr V_\pm(t,x,\lambda)=0$ for all $(t,x,\lambda)$, then it follows from~\eqref{UVdet} that the determinants $\det P_\pm(t,x,\lambda)$ do not depend on $t$, $x$. Hence, $\det P_\pm(t,x,\lambda)=\det P_\pm(t_0,x_0,\lambda)$ for~any~$(t,x,\lambda)$ and $(t_0,x_0,\lambda)$. Thus, we obtain that
 \begin{gather*}
\det Y_\pm(t,x,\la)=\det Y_\pm(0,0,\la)=
\det W_\pm(0,0,\la) \det\Phi(0,\la)= 1,\\
\det Z_\pm(t,x,\la)=\det Z_\pm(0,0,\la)=\det \Psi(0,0,\la) \det w_\pm(0,\la)= 1
\end{gather*}
 for all $(t,x,\lambda)$.
 \end{proof}

 \section{Formulation of the Riemann--Hilbert problem}\label{RHP0}

Since $Y_\pm$~\eqref{Y} and $Z_\pm$~\eqref{Z} are the solutions of the compatible AKNS systems \big(the $t$- and $x^\pm$-equations\big)~\eqref{teq},~\eqref{pmxeq}, they are linearly dependent, namely, they satisfy the ``scattering relations''
\begin{equation}\label{scat}
Y_\pm(t,x,\la)=Z_\pm(t,x,\la) T_\pm(\la), \qquad \la\in\R.
\end{equation}
Since the transition matrices (scattering matrices) $T_\pm(\la)$ are independent of $t$ and $x$, they can be presented in the form \big(as shown above, $w_\pm(x,\la)={\rm e}^{\ii\eta_\pm(\la)x\sigma_3}$\big)
\[
T_\pm(\la)=(Z_\pm(0,0,\la))^{-1}Y_\pm(0,0,\la)=
(w_\pm(0,\la))^{-1}\Phi(0,\la)=\Phi(0,\la)=M_0(\la).
\]
Hence,
\[
T_\pm(\la)=T(\la)= \begin{pmatrix}
a(\la) & b(\la) \\ b(\la) & a(\la)
\end{pmatrix}=\frac{1}{2} \begin{pmatrix}
\varkappa (\la)+\varkappa^{-1}(\la) & \varkappa (\la)-\varkappa^{-1}(\la) \\
\varkappa (\la)-\varkappa^{-1}(\la) & \varkappa (\la)+\varkappa^{-1}(\la) \end{pmatrix},
\]
where $a(\la)$, $b(\la)$ and $\varkappa(\la)$ are defined by~\eqref{ab} and~\eqref{varkappa}.

The spectral functions $a(\la)$, $b(\la)$ have the analytic continuations $a(z)$, $b(z)$ to $\mC\setminus[E,\overline{E}]$ which have the following asymptotics at infinity:
\begin{align*}
& a(z)= 1+O\big(z^{-2}\big),\qquad z\to\infty,\ z\in \mC,	\qquad
 b(z)= O\big(z^{-1}\big),\qquad z\to\infty,\ z\in \mC.	
\end{align*}
It was assumed that the interval $\big[E,\overline{E}\big]$ is oriented downward. For $z\in\big(E,\overline{E}\big)$ the functions $a(z)$, $b(z)$ and $\varkappa (z)$ satisfy the relations
\[
\varkappa_-(z)=\ii\varkappa_+(z),\qquad
a_-(z)=\ii b_+(z),\qquad
b_-(z)=\ii a_+(z),
\]
where the signs ``$-$'' and ``$+$'' in subscripts denote nontangential boundary values from the left and right of an oriented contour, and in what follows this notation is also used. It is easy to see that $a(z)$ has no zeros and, hence, a discrete spectrum is empty.

The ``reflection coefficient''
\begin{equation}\label{r}
r(z):=\frac{b(z)}{a(z)}=\frac{\varkappa (z)-\varkappa^{-1}(z)}{\varkappa (z)+\varkappa^{-1}(z)}=\frac{z-\Re E -w(z)}{\ii \Im E}= \frac{\ii \big(w(z)-z+\Re E\big)}{\Im E},
\end{equation}
where $w(z)=\sqrt{(z-E)\big(z-\overline{E}\big)}$~\eqref{w-funct}, is analytic in $z\in\mC\setminus\big[E,\overline{E}\big]$, satisfies the symmetry relation $\overline{r(\bar{z})}=-r(z)$, and
\[
r(z)=O\big(z^{-1}\big),\qquad z\to\infty.
\]
Since $w(\la)\in C^\infty(\R)$, then $r(\la)$ is infinitely differentiable as a function of the real variable $\la$. Note that if we, as above, take the cut along an arc $\gamma$ connecting $E$ and $\overline{E}$ via infinity (on the extended complex plane $\mC\cup\{\infty\}$) without intersection of the real axis and fix the branch of $w(z)$ by the condition $w(0)=|E|$ or $w(\Re E)=\Im E$, then $r(z)$ is analytic on $\mC\setminus\gamma$, including $\R$. The same holds for $a(z)$ and $b(z)$, where we fix the branch of $\varkappa(z)$ by the condition $\varkappa(\Re E)= {\rm e}^{\ii\pi/4}$.

Denote $\phi=\arg\big(\la-\overline{E}\big)$, $\la\in\R$, then $\varkappa^4(\la)=\frac{\la-\overline{E}}{\la-E}= {\rm e}^{\ii 2\phi}$, where $-\pi<2\phi\le\pi$, and $\varkappa(\la)= {\rm e}^{\ii\phi/2}$. Since $r(\la)=\frac{ {\rm e}^{\ii\phi/2}- {\rm e}^{-\ii\phi/2}}{ {\rm e}^{\ii\phi/2}+ {\rm e}^{-\ii\phi/2}}= \ii\tan(\phi/2)$, $\la\in\R$, and $-\pi/2 <\phi\le \pi/2$, then
\[
 1-r^2(\la)=1+|r(\la)|^2=1+\tan^2(\phi/2)=\frac{1}{\cos^2(\phi/2)}\le 2,
\]
and, therefore, $|r(\la)|\le 1$ for $\la\in\R$.

Due to the relations $a_-(z)=\ii b_+(z)$, $b_-(z)=\ii a_+(z)$ for $z\in\big(E,\overline{E}\big)$, the jump of $r(z)$ is
\begin{equation*}
h(z):=r_-(z)-r_+(z)=\frac{a_+^2(z)-b_+(z)^2}{a_+(z)b_+(z)}= \frac{1}{a_+(z)b_+(z)}= -\frac{2\ii w_+(z)}{\Im E},\qquad z\in\big(E,\overline{E}\big).
\end{equation*}
Since for $z=\Re E+\ii\nu\in(E,\overline{E})$,\; $w_\pm(\Re E+\ii\nu)= \lim_{\varepsilon\to 0}\sqrt{(\pm\varepsilon+\ii\nu)^2+\Im^2 E}=\sqrt{\Im^2 E-\nu^2}$, where $\nu=\Im z\in(\Im E,-\Im E)$, then $h(z)=-2\ii \sqrt{\Im^2 E-\nu^2}/\Im E$. Thus, for $z\in\big(E,\overline{E}\big)$ we introduce the function
\begin{equation}\label{h=h+}
h(z)=r_-(z)-r_+(z)=-\frac{2\ii\sqrt{\Im^2 E-\Im^2 z}}{\Im E},
\end{equation}
where $\Im z\in(\Im E,-\Im E)$, $\Re z =\Re E$.
It satisfies the symmetry relation $\overline{h(\bar{z})}=\overline{h(z)}=-h(z)$.

Note that the scattering relations~\eqref{scat} can be rewritten in the form
\begin{align*}
& \frac{Y_\pm[1](t,x,\la)}{a(\la)}=Z_\pm[1](t,x,\la)+ r(\la)Z_\pm[2](t,x,\la),\qquad \la\in\R, \\
& \frac{Y_\pm[2](t,x,\la)}{a(\la)}=r(\la)Z_\pm[1](t,x,\la)+ Z_\pm[2](t,x,\la), \qquad \la\in\R,
\end{align*}
and also, it follows from~\eqref{scat} that
\begin{align*}
&a(\la)=W(Y_\pm[1](t,x,\la),Z_\pm[2](t,x,\la))=W(Z_\pm[1](t,x,\la),Y_\pm[2](t,x,\la)), \\
&b(\la)=W(Z_\pm[1](t,x,\la),Y_\pm[1](t,x,\la))=W(Y_\pm[2](t,x,\la),Z_\pm[2](t,x,\la)),
\end{align*}
where $W(f,g)=\det(f,g)$ is the Wronskian (Wronski determinant) of the solutions $f$, $g$. In addition, $h(z)=\frac{\ii}{a_-(z)a_+(z)}$, $z\in\big(E,\overline{E}\big)$.

Denote
\begin{equation}\label{theta1}
\theta(t,x,z)=zt-\eta(z)x,
\end{equation}
where $\eta(z)$ is defined by~\eqref{eta}, and introduce the contour $\Sigma=\R\cup(E,\overline{E})$ oriented from the left to the right for the real line $\R$ and downward for the interval $(E,\overline{E})$ (see Figure~\ref{Sigma}). The analysis of the phase function $\theta(t,x,z)$~\eqref{theta1} is performed in Appendix~\ref{appendixA}.

\begin{figure}[t]\centering
 \begin{tikzpicture}[smooth,scale=1,thick,font=\small]
\draw[-Stealth, black] (-3.75,0) -- (-2.9,0);
\draw[-Stealth, black] (-3,0) -- (3.75,0);
\draw[-Stealth, black] (-2.25,1.2) -- (-2.25,0.65);
\draw[-Stealth, black] (-2.25,0.75) -- (-2.25,-0.85);
\draw[black] (-2.25,-0.75) -- (-2.25,-1.2);
\draw[color=black, fill=black](-2.25,-1.2) circle(0.03);
\draw[color=black, fill=black](-2.25,1.2) circle(0.03);
\node[black] at (-1.95,1.1) {$E$};
\node[black] at (-1.95,-1.1) {$\overline{E}$};
\node[black] at (3.1,0.3) {$\mathbb{R}$};
 \end{tikzpicture}
 \caption{The oriented contour $\Sigma$.}\label{Sigma}
\end{figure}

Using the scattering relations~\eqref{scat}, the relations obtained for $a(z)$, $b(z)$, $r(z)$ and $h(z)$, and Theorem~\ref{Theorem_SolAKNS}, we obtain the following. \emph{The matrix}
\begin{gather}\label{M}
M(t,x,z)\!=\!\begin{cases}
 \begin{pmatrix}
 Z^+[1](t,x,z){\rm e}^{\ii\theta(t,x,z)} & \displaystyle\frac{Y^+[2](t,x,z)}{a(z)} {\rm e}^{-\ii\theta(t,x,z)}
 \end{pmatrix}\!, & z\in \mC_+\!\!\setminus \! [E,\Re E), \vspace{1mm}\\
 \begin{pmatrix}
\displaystyle \frac{Y^-[1](t,x,z)}{a(z)} {\rm e}^{\ii\theta(t,x,z)} & Z^-[2](t,x,z){\rm e}^{-\ii\theta(t,x,z)}
 \end{pmatrix}\!, & z\in \mC_-\!\!\setminus \! \big(\Re E,\overline{E}\big],
\end{cases}\!\!\!
\end{gather}
\emph{is a solution of the following basic matrix Riemann--Hilbert problem $($the problem of conjugation of boundary values on a contour $\Sigma)$, where the reflection coefficient $r(z)$ and the function $h(z)$ are uniquely determined by the initial and boundary conditions~\eqref{MBini} and~\eqref{MBboundAe}:}

\begin{Theorem*}
Given a contour $\Sigma=\R\cup(E,\overline{E})$ $($see Figure~$\ref{Sigma})$ and functions $r(z)$~\eqref{r} $(z\in\R)$ and $h(z)$~\eqref{h=h+} $(z\in(E,\overline{E}))$, find a $2\times2$ matrix function $M(t,x,z)$ satisfying the following conditions:
\begin{itemize}\itemsep=0pt
	\item $M(t,x,z)$ is analytic in $z\in\mC\setminus \big(\Sigma\cup\big\{E,\overline{E}\big\}\big)$ and has the continuous boundary values $M_\pm(t,x,z)$ in $z\in\Sigma\setminus\{\Re E\}$;

	\item $M(t,x,z)$ is bounded near the point of the contour self-intersection $\Re E$ and has the singularities of the type $(z-E)^{-1/4}$, $\big(z-\overline{E}\big)^{-1/4}$ at the endpoints $E$, $\overline{E}$;

	\item $M(t,x,z)$ satisfies the boundary condition $($which is also called the jump condition$)$
\[
M_-(t,x,z)=M_+(t,x,z)J(t,x,z),\qquad z\in\Sigma\setminus\{\Re E\},
\]
where $J(t,x,z)$ is defined as
\begin{align}
J(t,x,z)=&\begin{pmatrix}
 1 & h(z) {\rm e}^{-2\ii \theta(t,x,z)} \\
 0 & 1	\end{pmatrix},\;&& z\in (E,\Re E), \nonumber\\
 =&\begin{pmatrix}
 1-r^2(z) {\rm e}^{2\ii (\theta_-(t,x,z)-\theta_+(t,x,z))}& -r(z) {\rm e}^{-2\ii\theta_+(t,x,z)} \\
 r(z) {\rm e}^{2\ii \theta_-(t,x,z)} & 1
 \end{pmatrix},\;&& z=\la\in \R, \label{J}\\
 =&\begin{pmatrix}
 1 & 0 \\
 h(z) {\rm e}^{2\ii \theta(t,x,z)} & 1
 \end{pmatrix},\;&& z\in (\Re E,\overline{E});\nonumber
\end{align}

	\item $M(t,x,z)$ satisfies the normalization condition $M(t,x,z)=I+O\big(z^{-1}\big)$, $z\to\infty$.
\end{itemize}
\end{Theorem*}

\begin{Remark}\label{JumpMatrix}
The jump matrix $J(t,x,z)$ for $z=\la\in\R$ takes two different form:
 \begin{alignat*}{3}
&J(t,x,\la) =\begin{pmatrix}
1-r^2(\la) {\rm e}^{-\pi n(\la) x}& -r(\la) {\rm e}^{-2\ii\theta_+(t,x,\la)} \\
r(\la) {\rm e}^{2\ii \theta_-(t,x,\la)} & 1
\end{pmatrix}, \qquad && \la\in[-\Lambda,\Lambda],\quad \Lambda\le\infty, & \\
&\hphantom{J(t,x,\la)}{}
=\begin{pmatrix}
1-r^2(\la)& -r(\la) {\rm e}^{-2\ii\theta(t,x,\la)} \\
r(\la) {\rm e}^{2\ii \theta(t,x,\la)} & 1
\end{pmatrix}, \qquad && \la\in \R\setminus [-\Lambda,\Lambda],\quad \Lambda<\infty,&
 \end{alignat*}
because $\eta_+(\la)=\eta_-(\la)$ and $\theta_+(t,x,\la)=\theta_-(t,x,\la)$
for $\la\notin [-\Lambda,\Lambda]$, and $\theta_-(t,x,\la)-\theta_+(t,x,\la)=(\eta_+(\la)-\eta_-(\la))x$, $\eta_+(\la)-\eta_-(\la)=\pi\ii n(\la)/2$ for $\la\in[-\Lambda,\Lambda]$ (recall that for the case when $\Lambda=+\infty$ we set $[-\Lambda,\Lambda]=\R$).
\end{Remark}

 \section[Solvability of the IBVP for the MB equations and the RH problem]{Solvability of the IBVP for the MB equations\\ and the RH problem}\label{Solv_IBVP_RHP}

 \begin{Theorem}\label{ThRH0}
Let the functions ${\mathcal E}(t,x)$, $\rho(t,x,\la)$ and $\mathcal N(t,x,\la)$ be a solution of the IBVP for the MB equations~\eqref{MB1a} with the initial and boundary conditions~\eqref{MBini} and~\eqref{MBboundAe} and have the properties indicated in Theorem~$\ref{Theorem_SolAKNS}$, and let $a(z)$ and $b(z)$ be the corresponding spectral functions. Then there exists a unique matrix function $M(t,x,z)$ $(M=(M_{ij})_{1\le i,j\le 2})$ which solves the basic RH problem {\rm RH}$_0$, and this matrix function defines the complex electric field envelope ${\mathcal E}(t,x)$ and the density matrix $F(t,x,\la)=\left(\begin{smallmatrix}\mathcal N(t,x,\la) & \rho(t,x,\la)\\ \overline{\rho(t,x,\la)} & -\mathcal N(t,x,\la) \end{smallmatrix}\right)$ by the formulas
\begin{gather}
{\mathcal E}(t,x)=-4\ii \lim_{z\to\infty} \big(z M(t,x,z)\big)_{12},\label{E1}
\\
F(t,x,\la)=- M(t,x,\la+\ii0) {\rm e}^{-\ii\la t\sigma_3}M^{-1}(0,x,\la+\ii0)\sigma_3 M(0,x,\la+\ii0) {\rm e}^{\ii\la t\sigma_3}\nonumber \\
\hphantom{F(t,x,\la)=}{} \times M^{-1}(t,x,\la+\ii0)\label{F1}
\end{gather}
$($more precisely, the function $F(t,x,\la)$ is defined by the formula~\eqref{F1} for $\la\in\R\setminus\{\Re E\}$ and defined appropriately at the point $\la=\Re E$ so that $F(t,x,\la)$ is continuous in $\la\in\R)$.
 \end{Theorem}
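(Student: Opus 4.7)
The plan is to establish existence, uniqueness, and then extract the two reconstruction formulas in sequence.

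\textbf{Existence.} Define $M(t,x,z)$ by the piecewise formula \eqref{M} using the basic solutions $Y^\pm$, $Z^\pm$ supplied by Theorem~\ref{Theorem_SolAKNS}. Each analytic/asymptotic property required by RH$_0$ is then transferred from those columns: the $(z-E)^{-1/4}$ and $\big(z-\overline{E}\big)^{-1/4}$ singularities come from $\varkappa(z)$ appearing in both $Y^\pm$ and $a(z)=(\varkappa+\varkappa^{-1})/2$; boundedness near $\Re E$ is exactly assertion~\ref{AsymptY}; and the normalization at infinity follows by combining the asymptotics of $Y^\pm[i]$, $Z^\pm[i]$ with the conjugating factors $e^{\pm i\theta(t,x,z)\sigma_3}$. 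The jump relation is verified piecewise on the three arcs of $\Sigma$: on $\R$, substitute the scattering relation \eqref{scat} $Y_\pm = Z_\pm T$ with $T(\lambda)=\left(\begin{smallmatrix}a&b\\b&a\end{smallmatrix}\right)$ and simplify using $a^2-b^2=1$ and $r=b/a$ to obtain exactly the matrix $J(t,x,\lambda)$ of \eqref{J}; on $(E,\Re E)$ and $(\Re E,\overline{E})$, the jumps come from $r_-(z)-r_+(z)=h(z)$ and $a_-a_+^{-1}$ relations listed above, giving the triangular blocks.

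\textbf{Uniqueness.} The standard Liouville argument applies. First one shows $\det M \equiv 1$: a direct check from \eqref{J} and \eqref{h=h+} gives $\det J \equiv 1$ on $\Sigma\setminus\{\Re E\}$, hence $\det M$ is a scalar analytic function on $\mC\setminus\big\{E,\overline{E}\big\}$ with integrable singularities at $E,\overline{E}$, bounded at $\Re E$, tending to $1$ at $\infty$; removability of integrable isolated singularities plus Liouville yield $\det M\equiv 1$. Given two solutions $M_1,M_2$, the product $N:=M_1 M_2^{-1}$ has no jump (common $J$), is bounded at $\Re E$, has at most $|z-E|^{-1/2}$ and $\big|z-\overline{E}\big|^{-1/2}$ singularities (removable), and tends to $I$ at infinity, so $N\equiv I$.

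\textbf{Reconstruction of $\mathcal E$.} From \eqref{M} and the $t$-equation \eqref{teq} satisfied by the $Y^\pm,Z^\pm$, the conjugated matrix $\widetilde M(t,x,z):=M(t,x,z)e^{-i\theta(t,x,z)\sigma_3}$ satisfies $\widetilde M_t=-i z\sigma_3 \widetilde M-H(t,x)\widetilde M$, so $M$ itself obeys $M_t + i z[\sigma_3,M]=-H M$ (after the cancellation of diagonal terms). Plugging the Laurent expansion $M(t,x,z)=I+M^{(1)}(t,x)/z+O(z^{-2})$ into this equation and matching $O(z^0)$ off-diagonal entries gives $H(t,x)=-\tfrac{i}{2}[\sigma_3,M^{(1)}(t,x)]$; reading off the $(1,2)$ entry yields $\tfrac12\mathcal E(t,x)=-2i M^{(1)}_{12}$, which is precisely~\eqref{E1}.

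\textbf{Reconstruction of $F$ and main obstacle.} Invoke the identity (see the discussion after \eqref{HFeq_2}) $F(t,x,\lambda)=-\Psi(t,x,\lambda)\sigma_3\Psi^{-1}(t,x,\lambda)$, where $\Psi$ is the solution of the $t$-equation normalized by $\Psi(0,x,\lambda)=I$. From $Z_\pm=\Psi\,w_\pm$ with $w_\pm=e^{i\eta_\pm(\lambda)x\sigma_3}$, and from the definition of $M$, one extracts $\Psi(t,x,\lambda)=M(t,x,\lambda+i0)\,e^{-i\lambda t\sigma_3}\,C(x,\lambda)$ for a matrix $C(x,\lambda)$ independent of $t$. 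Evaluating at $t=0$ and using $\Psi(0,x,\lambda)=I$ forces $C(x,\lambda)=e^{i\lambda\cdot 0\,\sigma_3}M^{-1}(0,x,\lambda+i0)$, after which substitution into $F=-\Psi\sigma_3\Psi^{-1}$ produces exactly formula \eqref{F1}. The main obstacle I expect is this last step: careful bookkeeping of the exponential factors $e^{\pm i\theta_\pm(t,x,\lambda)}$ in the two alternative blocks of \eqref{M}, consistency across the real line for $\lambda\in[-\Lambda,\Lambda]$ (where the $\theta_\pm$ differ), and continuous extension of \eqref{F1} through the self-intersection point $\lambda=\Re E$, which requires verifying that the boundary values of $M$ used in \eqref{F1} combine into a function that is in fact continuous in $\lambda$ despite the $\varkappa$-type singularities of individual columns.
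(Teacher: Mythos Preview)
Your proposal is correct and follows essentially the same approach as the paper: existence via the explicit formula~\eqref{M}, uniqueness via the Liouville argument after showing $\det M\equiv 1$, reconstruction of $\mathcal E$ from the $O(z^0)$ term of $M_t+\ii z[\sigma_3,M]=-HM$, and reconstruction of $F$ via the fundamental solution of the $t$-equation normalized at $t=0$. Two minor points: your intermediate commutator formula should read $H=-\ii[\sigma_3,M^{(1)}]$ (not $-\tfrac{\ii}{2}[\sigma_3,M^{(1)}]$) — your final formula~\eqref{E1} is nonetheless correct because the slip is compensated in the next line; and for the $F$ step the paper phrases it slightly more cleanly by noting directly that $\Phi_+(t,x,\lambda)=M_+(t,x,\lambda)e^{-\ii\theta_+(t,x,\lambda)\sigma_3}$ satisfies the $t$-equation, so the Cauchy matrix $W=\Phi_+(t,x,\lambda)\Phi_+^{-1}(0,x,\lambda)=M_+(t,x,\lambda)e^{-\ii\lambda t\sigma_3}M_+^{-1}(0,x,\lambda)$ (the $\eta_+$-factors cancel) is exactly your $\Psi$, avoiding the intermediate identification of $C(x,\lambda)$.
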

 \begin{proof}
The existence of the solution $M(t,x,z)$~\eqref{M} of the basic RH problem {\rm RH}$_0$ was proved in Section~\ref{RHP0}. Let us prove the uniqueness of the solution. First we show that if $M(t,x,z)$ is a~solution of this RH problem, then $\det M(t,x,z)\equiv 1$. It follows from the properties of $M(t,x,z)$ that the function $\det M(t,x,z)$ is analytic for $z\in\mC\setminus\Sigma^{\rm cl}$, where $\Sigma^{\rm cl}=\Sigma\cup \big\{E,\overline{E}\big\}$, and has continuous boundary values on $\Sigma\setminus\{\Re E\}$, and $\det M_-(t,x,z)=\det M_+(t,x,z)\det J(t,x,z)=\det M_+(t,x,z)$, $z\in\Sigma\setminus\{\Re E\}$, since $\det J(t,x,z)\equiv 1$. Hence, $\det M(t,x,z)$ is analytic for all $z\in\mC$ except for the points $E$, $\overline{E}$, $\Re E$ which are removable singularities by virtue of the properties of $M(t,x,z)$. Thus, after appropriate modifications (definitions) at these points, $\det M(t,x,z)$ is analytic for $z\in\mC$, and since $\det M(t,x,z)=1+O\big(z^{-1}\big)$ as ${z\to \infty}$ (by virtue of the normalization condition for $M(t,x,z)$), then by the Liouville theorem $\det M(t,x,z)\equiv 1$. Hence, there exists $M^{-1}(t,x,z)$ and it is analytic on $\mC\setminus\Sigma^{\rm cl}$ and has continuous boundary values on $\Sigma\setminus\{\Re E\}$. Now suppose that there is another solution $\hat{M}(t,x,z)$ of the basic RH problem {\rm RH}$_0$. Consider the function $N(t,x,z):=\hat{M}(t,x,z)M^{-1}(t,x,z)$ (cf. \cite[p.~189]{Deift}). Note that $N_-(t,x,z)=\hat{M}_+(t,x,z)J(t,x,z)J^{-1}(t,x,z)M_+^{-1}(t,x,z)=N_+(t,x,z)$, $z\in\Sigma\setminus\{\Re E\}$, and $N(t,x,z)\to I$ as $z\to \infty$. Using the same arguments as for the proof that $\det M(t,x,z)\equiv 1$, we obtain that $N(t,x,z)\equiv I$. Consequently, $\hat{M}(t,x,z)\equiv M(t,x,z)$, which proves the uniqueness of the solution.

Since $\mathcal E$, $\rho$ and $\mathcal N$ is a solution of the IBVP for the MB equations, then the matrix function
\begin{equation}\label{Phi}
\Phi(t,x,z)=M(t,x,z) {\rm e}^{-\ii\theta(t,x,z)\sigma_3},
\end{equation}
where $\theta(t,x,z)$ is defined in~\eqref{theta1}, satisfies the $t$-equation~\eqref{teq}, namely, $\Phi_t=-(\ii z\sigma_3+H(t,x))\Phi$, $z\in\mC\setminus\Sigma^{\rm cl}$, and hence the matrix $M(t,x,z)$ satisfies the equation $M_t=-\ii z[\sigma_3,M]-HM$. Taking into account that $M(t,x,z)=I+O\big(z^{-1}\big)$ as $z\to\infty$, we can represent $M(t,x,z)$ as $M(t,x,z)=I+\frac{m(t,x)}{z}+o\big(z^{-1}\big)$, $z\to\infty$, where $m(t,x)=\lim_{z\to\infty}z(M(t,x,z)-I)$. Substituting this expansion into the equation, we obtain that the equality $H(t,x)+\ii[\sigma_3,m(t,x)]=0$ holds. Hence, ${\mathcal E}(t,x)=-4\ii m_{12}(t,x)=-4\ii \lim_{z\to\infty} \big(z M(t,x,z)\big)_{12}$ (as above, the subindex denotes $(1,2)$-entry of the corresponding matrix).

Further, since ${\mathcal E}$, $\rho$, $\mathcal N$ is a solution of the MB equations~\eqref{MB1a}, then the equation~\eqref{HFeq_2} is satisfied, i.e., $F_t+[\ii\la\sigma_3+H,F]=0$. This equation can be represented in the form
 \begin{equation}\label{HFeq_2.1}
F_t=[U,F],
 \end{equation}
where $U(t,x,\la)$ is the matrix function~\eqref{U-MB} (note that since $U^*=-U$, the equation~\eqref{HFeq_2.1} can be represented as $F_t=U F + F U^*$). The initial condition~\eqref{MBini} for the MB equations implies $F(0,x,\la)\equiv -\sigma_3$. Furthermore, for $z=\la+\ii 0$ ($\la\ne \Re E$) the matrix function $\Phi(t,x,z)$~\eqref{Phi} satisfies the $t$-equation~\eqref{teq}, i.e., ${W_t=U W}$, with the same matrix function $U(t,x,\la)$, and $\det\Phi(t,x,z)=\det M(t,x,z) \equiv 1$. Consequently, the matrix function (the Cauchy matrix) $W(t,x,\la)=\Phi(t,x,\la+\ii0)\Phi^{-1}(0,x,\la+\ii0)$ is a solution of the $t$-equation~\eqref{teq} and satisfies the initial condition $W(0,x,\la)\equiv I$.
Then the function $F(t,x,\la)=-W(t,x,\la)\sigma_3W^{-1}(t,x,\la)$ is a unique solution of the initial value problem~\eqref{HFeq_2.1}, $F(0,x,\la)\equiv -\sigma_3$. Since $W(t,x,\la)= M(t,x,\la+\ii0) {\rm e}^{-\ii\la t\sigma_3} M^{-1}(0,x,\la+\ii0)$, then $F(t,x,\la)$ has the form~\eqref{F1}. Due to the properties of $\Phi(t,x,z)$, the function $F(t,x,\la)$ is continuous in $\la\in\R\setminus\{\Re E\}$ and bounded in the neighborhood of $\la=\Re E$. Thus, we can define it appropriately at this point so that $F(t,x,\la)$ becomes continuous in $\la$ on $\R$. Namely, define the matrix $M(t,x,z)$~\eqref{M} for $z=\Re E+\ii0$ as
\begin{gather*}
 M(t,x,\Re E+\ii0)=M_+(t,x,\Re E) \\
 \qquad := \begin{pmatrix} Z^+[1](t,x,\Re E+\ii0){\rm e}^{\ii\theta_+(t,x,\Re E)} & \displaystyle\frac{Y^+[2](t,x,\Re E+\ii0)}{a(\Re E)} {\rm e}^{-\ii\theta_+(t,x,\Re E)} \end{pmatrix},
\end{gather*}
where $Z^+[1](t,x,\Re E+\ii0)=Z_+[1](t,x,\Re E)$ and $Y^+[2](t,x,\Re E+\ii0)=Y_+[2](t,x,\Re E)$ ($Z_+(t,x,\la)$~\eqref{Z} and $Y_+(t,x,\la)$~\eqref{Y}, $\la\in\R$, are the solution of the $t$- and $x^\pm$-equations), and $a(\Re E)=\big(\varkappa (\Re E)+\varkappa^{-1}(\Re E)\big)/2$, $\varkappa(\Re E)={\rm e}^{\ii\pi/4}$ (in this case $r(\Re E)=\ii$, and $r(\la)$ is infinitely differentiable as a function of the real variable $\la$). Then $F(t,x,\la)$ defined by~\eqref{F1} is continuous in $\la\in\R$.
 \end{proof}

 \begin{Theorem}\label{causality-region}
For $t\le x$ and $t, x\ge 0$, a solution of the IBVP~\eqref{MB1a}--\eqref{MBboundAe} is trivial, i.e., ${\mathcal E(t,x)\equiv 0}$, $\rho(t,x,\la)\equiv 0$, and $\mathcal N(t,x,\la)\equiv -1$.
 \end{Theorem}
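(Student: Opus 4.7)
The plan is to first establish $\mathcal{E}(t,x)\equiv 0$ in $\{t\le x\}$ using the RH representation~\eqref{E1}, and then to recover $\rho\equiv 0$ and $\mathcal{N}\equiv -1$ directly from the MB equations~\eqref{MB1a} with the trivial initial data~\eqref{MBini}. For the first step I would write the unique RH solution as the Cauchy integral
\[
M(t,x,z)=I+\frac{1}{2\pi\ii}\int_\Sigma\frac{M_+(t,x,\zeta)(I-J(t,x,\zeta))}{\zeta-z}\,\dd\zeta
\]
(valid for $z\notin\Sigma$ thanks to $M_-=M_+J$ and $M\to I$ as $z\to\infty$) and extract the $1/z$ coefficient. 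By~\eqref{E1} this reduces the claim to showing
\[
\int_\Sigma\bigl(M_+(t,x,\zeta)(J(t,x,\zeta)-I)\bigr)_{12}\,\dd\zeta=0,\quad t\le x.
\]

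Next I would inspect $J-I$ piece by piece on $\Sigma=\R\cup(E,\overline E)$. Since the $(2,2)$-entry of $J-I$ is zero on all three pieces (see~\eqref{J} and Remark~\ref{JumpMatrix}), one has $(M_+(J-I))_{12}=M_{11,+}(J-I)_{12}$; and $(J-I)_{12}=0$ on $(\Re E,\overline E)$, so only $\R$ and $(E,\Re E)$ contribute, with integrands $-M_{11,+}(t,x,\lambda)\,r(\lambda)\,{\rm e}^{-2\ii\theta_+(t,x,\lambda)}$ and $M_{11,+}(t,x,\zeta)\,h(\zeta)\,{\rm e}^{-2\ii\theta(t,x,\zeta)}$, respectively. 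The crucial observation is that $f(z):=M_{11}(t,x,z)\,r(z)\,{\rm e}^{-2\ii\theta(t,x,z)}$ is holomorphic in $\mC_+\setminus[E,\Re E]$, with the integrable $(z-E)^{-1/4}$ endpoint singularity at $E$ and bounded behaviour at $\Re E$ (by Theorem~\ref{Theorem_SolAKNS}(c)). Across $[E,\Re E]$ the factor $M_{11}$ is analytic, because the jump matrix on this segment is upper-triangular with ones on the diagonal and hence $M_-[1]=M_+[1]$; meanwhile $r$ has jump $h$, so $f_--f_+=M_{11}\,h\,{\rm e}^{-2\ii\theta}$. Applying Cauchy's theorem to $f$ on a keyhole contour consisting of $\R$, a small loop around $[E,\Re E]$, and a large semicircle $C_R\subset\mC_+$ closing at infinity, the two contributing integrals above combine exactly into $\int_{C_R}f(\zeta)\,\dd\zeta$.

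The remaining step is a Jordan-type estimate on $C_R$. Since $\eta(z)=z+O(z^{-1})$ at infinity (from~\eqref{eta} and the integrability of $n$), one has $\Im\theta(t,x,z)=(t-x)\Im z+O(|z|^{-1})$, so for $z=R\,{\rm e}^{\ii\phi}$, $\phi\in(0,\pi)$, and $t<x$ strictly, $|{\rm e}^{-2\ii\theta(t,x,z)}|={\rm e}^{-2R(x-t)\sin\phi+O(R^{-1})}$ decays exponentially. Combined with $r(z)=O(|z|^{-1})$ and $M_{11}(t,x,z)=1+O(|z|^{-1})$, Jordan's lemma yields $\int_{C_R}f\,\dd\zeta\to 0$ as $R\to\infty$. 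Thus $\mathcal{E}(t,x)=0$ for $t<x$ and, by the continuous differentiability assumed in Theorem~\ref{Theorem_SolAKNS}, also on $t=x$. With $\mathcal{E}\equiv 0$ in $\{t\le x\}$, the second and third MB equations in~\eqref{MB1a} collapse to $\rho_t+2\ii\lambda\rho=0$ and $\mathcal{N}_t=0$, which together with~\eqref{MBini} force $\rho\equiv 0$ and $\mathcal{N}\equiv -1$ throughout the causality region. I expect the main obstacle to be the careful justification of the keyhole deformation in the presence of the $(z-E)^{-1/4}$ endpoint singularity at $E$, together with the uniform-in-$\phi$ control needed to apply Jordan's lemma on $C_R$; once these are in place the argument is routine complex analysis.
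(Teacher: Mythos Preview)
Your argument is correct but takes a genuinely different route from the paper. The paper removes the jumps all at once by the matrix transformation $M^{(1)}=MG^{(1)}$ with $G^{(1)}$ as in~\eqref{G(1)_t-le-x}: the factorization $J=G^{(1)}_+(G^{(1)}_-)^{-1}$ on $\R$ together with $r_--r_+=h$ on $(E,\overline E)$ makes $J^{(1)}\equiv I$, and since $G^{(1)}\to I$ for $t\le x$ (because $\sgn\Re(\ii\theta)=\sgn\Im z$ there) and the endpoint singularities are removable, Liouville gives $M^{(1)}\equiv I$, hence $M=(G^{(1)})^{-1}$ explicitly; then both $\mathcal E\equiv0$ and $F\equiv-\sigma_3$ drop out of~\eqref{E1} and~\eqref{F1} in one stroke. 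Your approach is the scalar ``unwinding'' of exactly this mechanism: you single out the $(1,2)$-moment, recognize that the relevant integrand on $\R\cup(E,\Re E)$ is precisely the boundary/jump data of the single function $f=M_{11}\,r\,{\rm e}^{-2\ii\theta}$ in $\mC_+$ (using that the first column of $M$ has no jump across $(E,\Re E)$), and collapse the contour to a large semicircle via Cauchy and Jordan. What you lose is uniformity---you treat $t<x$ first and pass to $t=x$ by continuity, and you recover $\rho,\mathcal N$ from the Bloch ODEs rather than from~\eqref{F1}---whereas the paper's transformation works for all $t\le x$ simultaneously and gives the full matrix $M$ explicitly. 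What you gain is that you never have to guess the factorization $G^{(1)}$. One small remark: the $(z-E)^{-1/4}$ behaviour you quote is a property of the RH problem $\mathrm{RH}_0$ itself (and in fact $M_{11}=Z^+_{11}{\rm e}^{\ii\theta}$ is actually regular at $E$, so the keyhole estimate is even easier than you anticipate); the reference to Theorem~\ref{Theorem_SolAKNS}(c) should be to the formulation of $\mathrm{RH}_0$.
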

 \begin{proof}
First, note that
\[
\sgn[\Re(\ii \theta(t,x,z))]=\sgn\Im z
\]
for $0\le t<x$ and $t=x$, $t,x>0$, and $\theta(0,0,z)\equiv 0$ (see Appendix~\ref{appendixA}). The analyticity of $r(z)$ outside the interval $[E,\overline{E}]$ allows one to apply the following transformation:
$M^{(1)}(t,x,z)=M(t,x,z) G^{(1)}(t,x,z)$, where
\begin{equation}\label{G(1)_t-le-x}
G^{(1)}(t,x,z)=\begin{cases}
\begin{pmatrix}
1 & -r(z) {\rm e}^{-2\ii\theta(t,x,z)} \\
0 & 1 \end{pmatrix}, & z\in\mC_+\setminus[E,\Re{E}),\vspace{1mm}\\
\begin{pmatrix}
1 & 0 \\
-r(z) {\rm e}^{2\ii\theta(t,x,z)} & 1 \end{pmatrix}, & z\in\mC_-\setminus(\Re{E},\overline{E}].
\end{cases}
\end{equation}
Then we obtain the equivalent RH problem
\[
M^{(1)}_-(t,x,z)=M^{(1)}_+(t,x,z)J^{(1)}(t,x,z),\qquad z\in\Sigma\setminus\{\Re E\},
\]
with the jump matrix
\[
J^{(1)}(t,x,z)= \big(G^{(1)}_+(t,x,z)\big)^{-1}J(t,x,z)G^{(1)}_-(t,x,z),\qquad z\in\Sigma,
\]
where $G^{(1)}_\pm(t,x,z)$ are nontangential boundary values of $G^{(1)}(t,x,z)$ on the left and right of the contour $\Sigma$ respectively.
Due to the factorization \smash{$J(t,x,z)=G^{(1)}_+(t,x,z)\big(G^{(1)}_-(t,x,z)\big)^{-1}$} of the jump matrix~\eqref{J} and the jump relations $r_-(z)- r_+(z)= h(z)$ for $z\in(E,\overline{E})$, the jump matrix $J^{(1)}(t,x,z)\equiv I$ for $z\in\Sigma$. Since $G^{(1)}(t,x,z)=I+O\big(z^{-1}\big)$ as $z\to \infty$, $t\le x$, and the points~$E$,~$\Re E$,~$\overline{E}$ are removable singularities of $M^{(1)}(t,x,z)$, then $M^{(1)}(t,x,z) \equiv I$ in view of the Liouville theorem.
Hence, $m^{(1)}(t,x)= \lim_{z\to\infty} z\big(M^{(1)}(t,x,z)-I\big)\equiv 0$. Since $\big(G^{(1)}(t,x,z)\big)^{-1}\to I$ as $z\to\infty$, then
\[
{\mathcal E}(t,x)=-4\ii \lim_{z\to\infty}(z M(t,x,z))_{12}= -4\ii\lim_{z\to\infty} \big(zM^{(1)}(t,x,z)\big(G^{(1)}(t,x,z)\big)^{-1}\big)_{12}\equiv 0.
\]
Since $M(t,x,\la+\ii 0)\equiv I$, then $F(t,x,\la)\equiv-\sigma_3$ (see~\eqref{F1}) and hence ${\rho(t,x,\la)\equiv 0}$ and ${\mathcal N(t,x,\la)\equiv -1}$.
 \end{proof}

Note that, since by Theorem~\ref{causality-region} in the ``causality region'' $t\le x$ ($t,x\ge 0$) the solution of the IBVP~\eqref{MB1a}--\eqref{MBboundAe} is trivial, the basic RH problem RH$_0$ associated with the IBVP~\eqref{MB1a}--\eqref{MBboundAe} provides the causality principle.

Below, we will prove that the basic problem RH$_0$ 
has a unique solution $M(t,x,z)$, infinitely differentiable in $t$, $x$, and that the matrix function $M(t,x,z)$ generates solutions of the AKNS systems~\eqref{teq}, ~\eqref{pmxeq} and, as a result, a solution of the MB equations, and this solution satisfied the initial and boundary conditions. Also, an integral representation of the solution of the MB equations through the solution of a singular integral equation will be given.

 \begin{Theorem}\label{Ex-Uniq}
For each $t\in\R_+$ and each $x\in (0,L)$, $L\le\infty$, the basic problem RH$_0$ has a~unique solution $M(t,x,z)$, and this solution is continuous in $(t,x)\in\R_+\times(0,L)$.
 \end{Theorem}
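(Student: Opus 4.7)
Uniqueness will follow exactly as in the proof of Theorem~\ref{ThRH0}: since $\det J\equiv 1$ and the $(z-E)^{-1/4}$, $\big(z-\overline{E}\big)^{-1/4}$ singularities are removable in the determinant, one obtains $\det M\equiv 1$, and then for any two solutions $M,\widehat M$ the ratio $N:=\widehat M M^{-1}$ is entire with $N(z)\to I$ at infinity, hence $N\equiv I$ by Liouville. The whole content of the theorem is therefore existence and continuity.

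For existence, the plan is to recast the boundary condition $M_-=M_+J$ on $\Sigma\setminus\{\Re E\}$ as a singular integral equation. Writing $M_--M_+=M_+(J-I)$ and applying the Sokhotski--Plemelj formulas yields, with $\mu:=M_+$,
\[
\mu(z)=I+\bigl(C_-[\mu(J-I)]\bigr)(z),\qquad z\in\Sigma,
\]
where $C_\pm$ are the boundary values of the Cauchy operator on $\Sigma$. The natural setting is a weighted $L^2(\Sigma)$ space (equivalently, Muskhelishvili's class $H^*$) accommodating the $1/4$-singularities at $E,\overline E$, boundedness at the self-intersection $\Re E$, and the decay $r(z)=O\big(z^{-1}\big)$ on $\R$; under the H\"older hypotheses on $n(\lambda)$ from conditions~\ref{Lambda_infty}--\ref{Lambda_finite}, the Muskhelishvili--N\"oether theory for Cauchy-type singular integral operators with $\det J\equiv 1$ yields that $I-C_J$ is Fredholm of index zero. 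Existence then reduces to the vanishing lemma.

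The vanishing lemma is the main obstacle. My plan is to exploit the Schwarz-type symmetries $\overline{r(\overline z)}=-r(z)$, $\overline{h(\overline z)}=-h(z)$ and $\overline{\eta_\pm(\lambda)}=\eta_\mp(\lambda)$ (hence $\overline{\theta_\pm(t,x,\lambda)}=\theta_\mp(t,x,\lambda)$ for real $t,x$). A direct computation from Remark~\ref{JumpMatrix} then shows that on $\R$ the jump satisfies $J=J^*$ with $J_{11}=1+|r(\lambda)|^2 {\rm e}^{-\pi n(\lambda)x}>0$ and $\det J\equiv 1$, i.e.\ $J$ is Hermitian positive definite, while on $(E,\overline E)$ the upper- and lower-triangular unipotent jumps are Schwarz reflections of each other across $\R$. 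Given a solution $M_0$ of the homogeneous problem (same jump, $M_0(z)\to 0$ at infinity), consider $H(z):=M_0(t,x,z)M_0^*(t,x,\overline z)$: the Hermiticity of $J$ on $\R$ and the reflection of the triangular jumps on $(E,\overline E)$ force $H$ to have no jump across $\Sigma$, while the endpoint singularities combine into removable ones. Thus $H$ is entire and $O\big(|z|^{-2}\big)$ at infinity, so $H\equiv 0$ by Liouville. Combined with the strict positivity of $J_{11}$, a standard contour-integration argument (essentially $\int_{\R} M_{0,+}J M_{0,+}^*\,\dd\lambda=0$, plus the triangular jump contributions vanishing by symmetry) gives $M_{0,+}\equiv 0$ on $\R$, hence $M_0\equiv 0$ throughout $\mC\setminus\Sigma^{\rm cl}$, completing the Fredholm argument.

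Finally, continuity in $(t,x)$ follows from the parameter dependence of the kernel. The variables $(t,x)$ enter $J(t,x,z)$ only through the phase $\theta(t,x,z)=zt-\eta(z)x$ and the factors ${\rm e}^{\pm 2{\rm i}\theta_\pm(t,x,\lambda)}$, ${\rm e}^{\pm 2{\rm i}\theta(t,x,z)}$, which are smooth in $(t,x)$ and uniformly bounded along $\Sigma$ for $(t,x)$ in compact subsets of $\R_+\times(0,L)$; hence $(t,x)\mapsto C_J$ is continuous in operator norm. Since $I-C_J$ is boundedly invertible at each $(t,x)$ by the vanishing lemma, $\mu(t,x,\cdot)=(I-C_J)^{-1}I$ depends continuously on $(t,x)$, and recovering $M(t,x,z)=I+C[\mu(J-I)](z)$ by the Cauchy integral yields continuity of $M(t,x,z)$ in $(t,x)$, uniformly in $z$ on compact subsets of $\mC\setminus\Sigma^{\rm cl}$.
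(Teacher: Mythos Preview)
Your proposal is essentially the same approach as the paper's, and the ingredients you identify for the vanishing lemma are exactly right. The paper, however, does not carry out the vanishing lemma by hand: it simply observes that $\Re J(t,x,\lambda)$ is positive definite for $\lambda\in\R$, $\det J\equiv 1$, the contour $\Sigma$ is symmetric under $z\mapsto\bar z$, and $J^{-1}(t,x,z)=J^*(t,x,\bar z)$ for $z\in\Sigma\setminus\R$, then invokes Zhou's theorem \cite[Theorem~9.3]{Zhou} to conclude that $(I-\mathbf{K})^{-1}\in B(L^2(\Sigma))$. Your Schwarz-symmetry computation and the $H(z)=M_0(z)M_0^*(\bar z)$ argument are precisely what underlies Zhou's result, so in effect you are re-deriving the cited black box.

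One technical point worth noting: the paper works in \emph{unweighted} $L^2(\Sigma)$, not in a weighted space or Muskhelishvili's $H^*$ class. This is legitimate because $h(z)=O\big(|z-E|^{1/2}\big)$ at the endpoints and $r(\lambda)=O(\lambda^{-1})$ on $\R$, so $I-J\in L^2(\Sigma)\cap L^\infty(\Sigma)$ without any weight; the $(z-E)^{-1/4}$ behaviour of $M$ is recovered a posteriori from the structure of the problem rather than imposed on the function space. Your weighted setup is not wrong, but it is extra machinery you do not need. Also, in your endpoint step the product $M_0(z)M_0^*(\bar z)$ is only $O\big(|z-E|^{-1/2}\big)$, not bounded; removability then follows from $(z-E)H(z)\to 0$, which you should state explicitly rather than calling the singularity ``removable'' outright.
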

 \begin{proof}
Let $t$ and $x$ be arbitrary fixed elements from $\R_+$ and from $(0,L)$ respectively. The boundary condition $M_-(t,x,z)=M_+(t,x,z)J(t,x,z)$ on $\Sigma\setminus\{\Re E\}$ can be rewritten in the form of the jump condition $M_+(t,x,z)-M_-(t,x,z)=M_+(t,x,z)[I-J(t,x,z)]$, $z\in\Sigma\setminus\{\Re E\}$. Since $M(\infty)=I$, then a solution of the basic problem RH$_0$, if it exists, has the form
\begin{equation}\label{sol_M}
 M(t,x,z) =\frac{1}{2\pi\ii}\int_\Sigma \frac{M_+(t,x,s)[I-J(t,x,s)]}{s-z}\, \dd s +I,\qquad z\in \mC\setminus\Sigma.
\end{equation}
Passing to the limit in the equality~\eqref{sol_M} as $z\to\zeta_+$, where ${\zeta_+=\lim_{z\to \zeta, z\in + \text{side}}z}$, ${\zeta\in\Sigma}$ and ``$+$~side'' denotes a positive side (a boundary value on the left) of the oriented contour $\Sigma$ (we exclude the points of self-intersection of the contour), and carrying out elementary transformations, we obtain the following singular integral equation for $M_+(t,x,\zeta)$:
\begin{gather*}
 M_+(t,x,\zeta)-I= \frac{1}{2\pi\ii} \int_\Sigma \frac{[M_+(t,x,s)-I][I-J(t,x,s)]}{s-\zeta_+}\, \dd s \\
 \hphantom{M_+(t,x,\zeta)-I=}{} + \frac{1}{2\pi\ii} \int_\Sigma \frac{I-J(t,x,s)}{s-\zeta_+}\, \dd s, \qquad \zeta\in\Sigma.
\end{gather*}
Denote $W(t,x,\zeta):=M_+(t,x,\zeta)-I$ and introduce
the singular integral operator $\mathbf{K}$ and operator function $\Pi$ defined by
\begin{gather*}
 \mathbf{K}[W](t,x,\zeta):= \frac{1}{2\pi\ii} \int_\Sigma \frac{W(t,x,s)[I-J(t,x,s)]}{s-\zeta_+}\, \dd s,
\\
 \Pi(t,x,\zeta):= \frac{1}{2\pi\ii} \int_\Sigma \frac{I-J(t,x,s)}{s-\zeta_+}\, \dd s.
\end{gather*}
We will prove that for any fixed $t$, $x$ there exists a unique solution $W(t,x,\zeta)$ of the singular integral equation (cf. \cite{Kotlyarov13})
\begin{equation}\label{IntEq-RH0}
 W(t,x,\zeta)-\mathbf{K}[W](t,x,\zeta) =\Pi(t,x,\zeta),\qquad \zeta\in\Sigma,
\end{equation}
which is considered in the space of operator ($2\times2$ matrix) functions $W(t,x,\zeta)=:W(\zeta)\in L^2(\Sigma)$.
The operator $ C_\pm[f](\zeta):=\frac{1}{2\pi\ii} \int_\Sigma\frac{f(s)}{s-\zeta_\pm}\, \dd s\in B\big(L^2(\Sigma)\big)$, where $B\big(L^2(\Sigma)\big)$ is the (Banach) space of bounded linear operators from $L^2(\Sigma)$ into $L^2(\Sigma)$, since $\Sigma$ is a Carleson curve (see, e.g., \cite{Spitkovskii} and references therein, or \cite{Lenells17}).
The function $I-J(t,x,s)$ belongs to $L^2(\Sigma)$ with respect to $s$. Thus, $\Pi(t,x,\zeta)\in L^2(\Sigma)$ as a function of the variable $\zeta$. Note that $I-J(t,x,s)\in L^{\infty}(\Sigma)$ in $s$. Since $\Re J(t,x,z)$ is positive definite for $z\in \R$, $\det J(t,x,z)\equiv 1$, the contour $\Sigma$ is symmetric with respect to the real axis $\R$ and $J^{-1}(t,x,z)=J^*(t,x,\bar{z})$ for $z\in \Sigma\setminus \R$ \big(thus, the contour~$\Sigma$ and the matrix $J^{-1}(t,x,z)$ satisfy the Schwarz reflection principle\big), then it follows from \cite[Theorem~9.3, p.~984]{Zhou} that there exists the operator $(I-\mathbf{K})^{-1}\in B\big(L^2(\Sigma)\big)$.
Consequently, the singular integral equation~\eqref{IntEq-RH0} has a unique solution $W(t,x,\zeta)$ for any fixed $t$ and $x$, which belongs to $L^2(\Sigma)$ with respect to $\zeta$. Therefore, there exists the solution $M(t,x,z)$ of the basic problem RH$_0$, which can be obtained by the formula
\begin{equation}\label{sol_RH0}
 M(t,x,z) =\frac{1}{2\pi\ii}\int_\Sigma \frac{[W(t,x,s)+I][I-J(t,x,s)]}{s-z}\, \dd s +I,\qquad z\in \mC\setminus\Sigma.
\end{equation}
Since the operator $I-\mathbf{K}\in B\big(L^2(\Sigma)\big)$ and the operator function $\Pi\in L^2(\Sigma)$ depends continuously on the parameters $(t,x)\in\R_+\times(0,L)$, then the inverse operator $(I-\mathbf{K})^{-1}$ and the solution $W(t,x,\zeta)$ of the equation~\eqref{IntEq-RH0} also depend continuously on $(t,x)$. Taking into account the form~\eqref{sol_RH0} of the solution $M(t,x,z)$, we obtain that $M(t,x,z)$ is continuous in $(t,x)\in\R_+\times(0,L)$.

The proof of the uniqueness of the solution $M(t,x,z)$ is carried out in the same way as for Theorem~\ref{ThRH0}.
\end{proof}

 \begin{Theorem}\label{smoothnessRH}
For any $(t,x)\in\R_+\times(0,L)$, $L\le\infty$, a solution of the basic problem RH$_0$ is infinitely differentiable in $t$ and $x$.
 \end{Theorem}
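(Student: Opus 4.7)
The plan is to deduce the smoothness of $M(t,x,z)$ in $(t,x)$ from the smoothness of the density $W(t,x,\cdot)\in L^2(\Sigma)$ that solves the singular integral equation $(I-\mathbf K(t,x))W(t,x,\cdot)=\Pi(t,x,\cdot)$ from the proof of Theorem~\ref{Ex-Uniq}, together with the Cauchy representation
\[
M(t,x,z)=I+\frac{1}{2\pi\ii}\int_\Sigma \frac{[W(t,x,s)+I](I-J(t,x,s))}{s-z}\, \dd s,\qquad z\in \mC\setminus\Sigma.
\]
For $z\notin\Sigma$ the kernel $1/(s-z)$ is a smooth bounded function of $s\in\Sigma$, so one only needs $W$ to be $C^\infty$ in $(t,x)$ as an $L^2(\Sigma)$-valued function and $(I-J)$ to be pointwise $C^\infty$ in $(t,x)$.

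The structural input is the algebraic factorisation $J(t,x,z)=\mathrm{e}^{-\ii\theta(t,x,z)\hat\sigma_3}J_0(z)$, where $\hat\sigma_3 X:=[\sigma_3,X]$, the phase $\theta(t,x,z)=zt-\eta(z)x$ is linear in $(t,x)$, and $J_0(z)$ depends only on $z$. Hence $J$, and through it the operator $\mathbf K(t,x)$ and the function $\Pi(t,x,\cdot)$, depend $C^\infty$-smoothly on $(t,x)$ pointwise in $s\in\Sigma$, with $\partial_t^j\partial_x^k J$ expressible in closed form.

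I would then argue by induction on $j+k$. Formal differentiation of $(I-\mathbf K)W=\Pi$ produces, at each order, an equation
\[
(I-\mathbf K(t,x))\,\partial_t^j\partial_x^k W(t,x,\cdot)=R_{j,k}(t,x,\cdot),
\]
where $R_{j,k}$ is a polynomial in the lower-order derivatives of $W$, $\Pi$ and $\mathbf K$. Since $I-\mathbf K$ is invertible in $B(L^2(\Sigma))$ by Theorem~\ref{Ex-Uniq} and depends continuously on $(t,x)$ together with its inverse, one can solve for $\partial_t^j\partial_x^k W\in L^2(\Sigma)$ provided $R_{j,k}\in L^2(\Sigma)$; a standard Lipschitz estimate then promotes the formal derivative to the classical derivative. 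The $C^\infty$ smoothness of $M(t,x,z)$ follows by differentiating the Cauchy representation under the integral.

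The hard part will be controlling $R_{j,k}$ on the real part of $\Sigma$. On the compact arc $(E,\overline E)$ everything is smooth and bounded, so no issue arises. On $\R$, however, the derivatives $\partial_t^j\partial_x^k J$ acquire factors of $z^{j+k}$ inherited from $\theta_t=z$ and $\theta_x=-\eta(z)$, compensated only once by the decay $r(z)=O(z^{-1})$ in the off-diagonal entries of $I-J$; the resulting expressions lie in $L^\infty(\R)$ but generally not in $L^2(\R)$. To extract the missing decay one must exploit the oscillatory character of $\mathrm{e}^{\pm 2\ii\theta}$: integrating by parts in $s$ and using that $\partial_s\theta(t,x,s)=t-\eta'(s)x$ tends to $t-x$ outside the support of $n$ and is therefore bounded away from zero whenever $t\ne x$, one gains the extra power of $s^{-1}$ needed to place each $R_{j,k}$ in $L^2(\Sigma)$. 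On the causality line $\{t=x\}$ this phase-stationarity argument degenerates, but there Theorem~\ref{causality-region} gives $M\equiv I$ on the whole region $\{t\le x\}$, so a one-sided matching argument across $\{t=x\}$ yields $C^\infty$ regularity on all of $\R_+\times(0,L)$.
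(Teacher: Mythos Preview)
Your inductive scheme for differentiating the resolvent equation $(I-\mathbf K)W=\Pi$ is a reasonable skeleton, and you have correctly isolated the difficulty: on the unbounded real part of $\Sigma$ each $(t,x)$-derivative of $J$ brings down a factor of order $|s|$, so already $\partial_t(I-J)\notin L^2(\R)$ (the single power of decay from $r(s)=O(s^{-1})$ is exhausted by one derivative), and hence neither $\partial_t\Pi=C_+[\,\partial_t(I-J)\,]$ nor the corresponding term in $R_{1,0}$ lies in the space where $(I-\mathbf K)^{-1}$ is known to act.

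The gap is in the remedy you propose. Integration by parts in $s$ does not buy decay inside a Cauchy-type singular integral: writing $g=\partial_s G$ and computing $\int_\R g(s)(s-z)^{-1}\,\dd s=\int_\R G(s)(s-z)^{-2}\,\dd s$ trades the $L^2$-bounded kernel $(s-z)^{-1}$ for $(s-z)^{-2}$, whose boundary operator is no longer bounded on $L^2(\Sigma)$. Equivalently, $C_+[\partial_s G]=\partial_\zeta\bigl(C_+[G]\bigr)$, which is a derivative of an $L^2$ function and need not itself be in $L^2$. Iterating the procedure only shifts the problem to the boundary terms $G_k$, which still fail to be in $L^2$ until you have done enough steps to put the \emph{remainder} in $L^2$---but then you have gained nothing for the total. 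In short, the oscillatory-integral heuristic does not transfer to the singular-integral setting in the $L^2$ framework you are using.

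The paper resolves exactly this issue by a different mechanism: it uses the analyticity of $r(z)$ and $\theta(t,x,z)$ to \emph{deform the jump contour off} $\R$ into regions where $\mathrm{e}^{\pm 2\ii\theta}$ has strict exponential decay (the Deift--Zhou steepest-descent move). Concretely, for $t>x$ one conjugates $M\mapsto M^{(2)}$ so that the new jump lives on a contour $\Upsilon$ whose unbounded branches sit inside $\{\pm\Re(\ii\theta)<0\}$; then $I-J^{(2)}$ together with all of its $(t,x)$-derivatives decays exponentially along $\Upsilon$, and your induction goes through trivially for the transformed problem. Smoothness of $M$ then follows by undoing the explicit algebraic transformations. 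For $t\le x$ the paper uses the explicit formula $M=(G^{(1)})^{-1}$ from the causality argument, as you also suggest. So the missing idea is: convert oscillation to decay by moving the contour, not by integrating by parts along it.
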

 \begin{proof}
By Theorem~\ref{Ex-Uniq}, the basic problem RH$_0$ has a unique solution $M(t,x,z)$ which can be obtained by the formula~\eqref{sol_RH0}.

In order for the solution $M(t,x,z)$, as well as the solution $W(t,x,\zeta)$ of the singular integral equation~\eqref{IntEq-RH0}, to be differentiable in $t$ and $x$, it is necessary that the integrals that arise when differentiating the solution $M(t,x,z)$ and the equation~\eqref{IntEq-RH0} be convergent. The matrix $J(t,x,z)$ and its derivatives in $t$ and $x$ are responsible for the decrease of the integrands.

On unbounded parts of the contour $\Sigma$, i.e., for $z\in\R$ with $|z|\gg 1$ ($|z|$ is sufficiently large), the jump matrix has the form (see Remark~\ref{JumpMatrix})
\[
J(t,x,z)=\begin{pmatrix} 1-r^2(z)& -r(z) {\rm e}^{-2\ii\theta(t,x,z)} \\
r(z) {\rm e}^{2\ii \theta(t,x,z)} & 1 \end{pmatrix}
\]
if $\Lambda<\infty$ ($z\in\R\setminus [-\Lambda,\Lambda]$), and
\[
J(t,x,z)=\begin{pmatrix} 1-r^2(z) {\rm e}^{-\pi n(z) x}& -r(z) {\rm e}^{-2\ii\theta_+(t,x,z)} \\
r(z) {\rm e}^{2\ii \theta_-(t,x,z)} & 1 \end{pmatrix}
\]
if $\Lambda=\infty$ ($z\in \R$).

Since $r(z)=O\big(z^{-1}\big)$ and $n(z)\to 0$ as $z\to\pm\infty$ ($z\in\R$) and $\Re(2\ii \theta_\pm(t,x,z))=\pm \pi n(z)x/2$ ($z\in\R$), then $J(t,x,z)=I+O\big(z^{-1}\big)$ as $z\to \pm\infty$ for $\Lambda\le\infty$.

\emph{Consider the case when $t\le x$}. In this case $M(t,x,z)=\big(G^{(1)}(t,x,z)\big)^{-1}$, where $G^{(1)}$ is defined in~\eqref{G(1)_t-le-x} (see the proof of Theorem~\ref{causality-region}), and since $\theta(t,x,z)$ is infinitely differentiable in the parameters $t$ and $x$, then $M(t,x,z)$ is also infinitely differentiable in the parameters.

\emph{Now, consider the case when $t>x$}.

\emph{Consider $\Lambda<\infty$}.
Choose any numbers $\la_1, \la_2\in\R$ with $|\la_i|$ sufficiently large and such that $\la_1<-\Lambda$, $\la_1<\Re E$, $\la_2>\Lambda$, and perform the following ``$\delta$-transformation'':
\[
M^{(1)}(t,x,z)=M(t,x,z)\delta^{\sigma_3}(z), \qquad \delta^{\sigma_3}(z)=\begin{pmatrix} \delta(z) & 0 \\ 0 & \delta^{-1}(z) \end{pmatrix},
\]
where $\delta(z)$ is a solution of the following problem of conjugation of boundary values (RH problem):
\begin{itemize}\itemsep=0pt
\item $\delta(z)$ is analytic in $\mC\setminus ((-\infty,\la_1]\cup[\la_2,+\infty))$,
\item $\delta_+(z)=\delta_-(z)\big(1-r^2(z)\big)$
 on $(-\infty,\la_1]\cup[\la_2,+\infty)$,
\item $\delta(z)\to 1$ as $z\to \infty$.
\end{itemize}
Note that $r^2(\la)<0$ for $\la\in\R$, since $\Im w(\la)=0$ on $\R$ and $\Im E>0$.
A unique solution of the problem given above is the function
\begin{gather*}
\delta(z)=\exp\left\{\frac{1}{2\pi\ii}\int_{s\in (-\infty,\la_1]\cup[\la_2,+\infty)} \!\! \frac{\ln (1-r^2(s))\, \dd s}{s-z}\right\},\qquad z\in \mC\setminus \big((-\infty,\la_1]\cup[\la_2,+\infty)\big).
\end{gather*}

For $M^{(1)}(t,x,z)$ the boundary condition has the form
\[
M^{(1)}_-(t,x,z) = M^{(1)}_+(t,x,z)J^{(1)}(t,x,z)
\]
with $J^{(1)}(t,x,z)= \delta_+^{-\sigma_3}(z)J(t,x,z)\delta_-^{\sigma_3}(z)$ which after applying factorizations takes the form
\begin{align*}
J^{(1)}(t,x,z)=&
 \begin{pmatrix} 1 & 0 \\
 \displaystyle\frac{r(z)}{1-r^2(z)} \delta_+^2(z) {\rm e}^{2\ii\theta(t,x,z)} & 1 \end{pmatrix}\!\!
 \begin{pmatrix} 1 & \displaystyle-\frac{r(z)}{1-r^2(z)} \delta_-^{-2}(z) {\rm e}^{-2\ii\theta(t,x,z)} \\
 0 & 1 \end{pmatrix}, \nonumber\\
 & \hspace{82mm}
 z\in (-\infty,\la_1]\cup[\la_2,+\infty), \nonumber\\
 =& \begin{pmatrix} 1-r^2(z){\rm e}^{2\ii(\theta_-(t,x,z)-\theta_+(t,x,z))} & -r(z)\delta^{-2}(z) {\rm e}^{-2\ii\theta_+(t,x,z)} \\ r(z)\delta^2(z) {\rm e}^{2\ii\theta_-(t,x,z)} & 1 \end{pmatrix},\hspace{5mm} z\in(\la_1,\la_2), \nonumber\\
 =& \begin{pmatrix} 1 & h(z)\delta^{-2}(z) {\rm e}^{-2\ii \theta(t,x,z)} \\
 0 & 1 \end{pmatrix}, \hspace{52mm} z\in (E,\Re E), \nonumber\\
 =& \begin{pmatrix} 1 & 0 \\
 h(z)\delta^2(z) {\rm e}^{2\ii\theta(t,x,z)} & 1 \end{pmatrix}, \hspace{57mm} z\in (\Re E,\overline{E}). 
\end{align*}

We introduce the smooth curves $L_1$, $L_3$ and $\overline{L_1}$, $\overline{L_3}$, symmetric to $L_1$, $L_3$ with respect to the real axis, the additional jump contour $\widetilde{\Sigma}=L_1\cup L_3\cup \overline{L_1}\cup \overline{L_3}$ (the points $\la_1$, $\la_2$ do not belong to $\widetilde{\Sigma}$), the domains $D_i$ and the domains $\overline{D_i}$, symmetric to $D_i$ with respect to the real axis, $i=1,2,3$, which are displayed in Figure~\ref{conturG2-LaFin}. Thus, the complex $z$-plane is decomposed into the parts $\mC=\widetilde{\Sigma}\cup\Sigma \cup\bigl(\cup_{i=1}^3 D_i\bigr)\cup\bigl(\cup_{i=1}^3 \overline{D_i}\bigr)$.
\begin{figure}[t] \centering
 \begin{tikzpicture}[smooth,scale=1,thick,font=\small]
\draw[-Stealth, black] (-6,0) -- (0.5,0);
\draw[black] (0.4,0) -- (6,0);
\draw[-Stealth, black] (-1.25,0.75) -- (-1.25,0.25);
\draw[-Stealth, black] (-1.25,0.35) -- (-1.25,-0.45);
\draw[black] (-1.25,-0.35) -- (-1.25,-0.75);
\draw[color=black, fill=black](-1.25,0.75) circle(0.03);
\draw[color=black, fill=black](-1.25,-0.75) circle(0.03);
\draw[black,very thick] (-5,1.7) .. controls (-3.2,0.7) and (-3.2,-0.7) .. (-5,-1.7);
\draw[-Stealth, black,very thick] (-6,1.94) .. controls (-5.75,1.93) and (-5.25,1.8).. (-4.95,1.68);
\draw[-Stealth, black,very thick] (-6,-1.94) .. controls (-5.75,-1.93) and (-5.25,-1.8).. (-4.95,-1.68);
\draw[color=black, fill=black](-3.65,0) circle(0.05);
\draw[Stealth-Stealth,black,very thick] (5,1.7) .. controls (3.2,0.7) and (3.2,-0.7) .. (5,-1.7);
\draw[black,very thick] (6,1.94) .. controls (5.75,1.93) and (5.25,1.8).. (4.95,1.68);
\draw[black,very thick] (6,-1.94) .. controls (5.75,-1.93) and (5.25,-1.8).. (4.95,-1.68);
\draw[color=black, fill=black](3.62,0) circle(0.05);
\node[black] at (-4,-0.3) {$\lambda_{1}$};
\node[black] at (-5.3,0.9) {$D_{3}$};
\node[black] at (-5.3,-0.9) {$\overline{D_{3}}$};
\node[black] at (-4.7,1.98) {$L_{3}$};
\node[black] at (-4.7,-1.98) {$\overline{L_{3}}$};
\node[black] at (-1,0.97) {$E$};
\node[black] at (-1,-1) {$\overline{E}$};
\node[black] at (0.9,1.5) {$D_{2}$};
\node[black] at (0.9,-1.5) {$\overline{D_{2}}$};
\node[black] at (1.4,0.25) {$\mathbb{R}$};
\node[black] at (4,-0.3) {$\lambda_{2}$};
\node[black] at (5.3,0.9) {$D_{1}$};
\node[black] at (5.3,-0.9) {$\overline{D_{1}}$};
\node[black] at (4.7,1.98) {$L_{1}$};
\node[black] at (4.7,-1.98) {$\overline{L_{1}}$};
 \end{tikzpicture}
 \caption{The contours $\Sigma=\R\cup\big(E,\overline{E}\big)$ and $\widetilde{\Sigma}=L_1\cup L_3\cup \overline{L_1}\cup \overline{L_3}$ and the domains $D_i$, $\overline{D_i}$, $i=1,2,3$.}\label{conturG2-LaFin}
\end{figure}
Further, we transform the matrix $M^{(1)}(t,x,z)$ into
\[
M^{(2)}(t,x,z)= M^{(1)}(t,x,z)G^{(2)}(t,x,z),
\]
where
\begin{alignat*}{3}
&G^{(2)}(t,x,z)= \begin{pmatrix} 1 & 0 \\ \displaystyle \frac{r(z)}{1-r^2(z)} \delta^2(z) {\rm e}^{2\ii \theta(t,x,z)} & 1 \end{pmatrix},\qquad && z\in D_1\cup D_3, &\nonumber\\
 &\hphantom{G^{(2)}(t,x,z)}{} = \begin{pmatrix} 1 & \displaystyle\frac{r(z)}{1-r^2(z)} \delta^{-2}(z) {\rm e}^{-2\ii\theta(t,x,z)} \\ 0 & 1 \end{pmatrix}, \qquad && z\in\overline{D_1}\cup\overline{D_3}, &\nonumber\\
 &\hphantom{G^{(2)}(t,x,z)}{}= I, \qquad && z\in D_2\cup\overline{D_2}.& 
\end{alignat*}
Since ${\rm e}^{2\ii\theta(t,x,z)}\to 0$ and ${\rm e}^{-2\ii\theta(t,x,z)}\to 0$ as $z\to \infty$ for $\Im z>0$ and $\Im z<0$ respectively (see the case $t>x$, $\Lambda<\infty$ in Appendix~\ref{appendixA} and Figure~\ref{ReiTheta1}), then $G^{(2)}(t,x,z)\to I$ and hence $M^{(2)}(t,x,z)\to I$ as $z\to\infty$.
For $M^{(2)}(t,x,z)$ the jump (boundary) condition has the form $M^{(2)}_-(t,x,z) = M^{(2)}_+(t,x,z)J^{(2)}(t,x,z)$, ${z\in \Sigma\cup\widetilde{\Sigma}\setminus\{\Re E,\la_1,\la_2\}}$, with the jump matrix ${J^{(2)}(t,x,z)= \big(G_+^{(2)}(t,x,z)\big)^{-1}J^{(1)}(t,x,z)G_-^{(2)}(t,x,z)}$ defined on the new contour $\Sigma\cup \widetilde{\Sigma}$ whose orientation remains the same on $\Sigma= \R\cup(E,\overline{E})$, and all branches of $\widetilde{\Sigma}$ (i.e., $L_i$, $\overline{L_i}$, $i=1,3$) are oriented from the left to the right. It is easy to verify that
\begin{align}
J^{(2)}(t,x,z)&=I, && z\in (-\infty,\la_1)\cup(\la_2,+\infty), \nonumber\\
 &=\begin{pmatrix}
 1 & 0 \\
 \frac{r(z)}{1-r^2(z)} \delta^2(z) {\rm e}^{2\ii \theta(t,x,z)} & 1 \end{pmatrix}, &&z\in L_1\cup L_3, \smallskip \nonumber \\
 &=\begin{pmatrix}
 1 & -\frac{r(z)}{1-r^2(z)}\delta^{-2}(z) {\rm e}^{-2\ii\theta(t,x,z)} \\
 0 & 1 \end{pmatrix}, &&z\in \overline{L_1}\cup\overline{L_3}, \nonumber\\
 &= J^{(1)}(t,x,z), &&z\in(\la_1,\la_2)\cup\big(E,\overline{E}\big)\setminus\{\Re E\}. \label{case2J2-dif}
\end{align}

Introduce the contour $\Upsilon= \widetilde{\Sigma}\cup[\la_1,\la_2]\cup[E,\overline{E}]=L_1\cup L_3\cup \overline{L_1}\cup\overline{L_3}\cup[\la_1,\la_2]\cup[E,\overline{E}]$. As in the proof of Theorem~\ref{Ex-Uniq}, we obtain that for any fixed $t$, $x$ there exists a unique solution $W^{(2)}(t,x,\zeta)$ of the singular integral equation (the equation is considered in the space of operator functions $W^{(2)}(t,x,\zeta)\in L^2(\Upsilon)$ with respect to $\zeta$; $W^{(2)}(t,x,\zeta):=M^{(2)}_+(t,x,\zeta)-I$)
\begin{equation}\label{IntEq2-RH0}
 W^{(2)}(t,x,\zeta)-\mathbf{K}^{(2)}\big[W^{(2)}\big](t,x,\zeta) =\Pi^{(2)}(t,x,\zeta),\qquad \zeta\in\Upsilon,
\end{equation}
where
 \begin{align*}
&\mathbf{K}^{(2)}\big[W^{(2)}\big](t,x,\zeta):= \frac{1}{2\pi\ii} \int_\Upsilon \frac{W^{(2)}(t,x,s)\big[I-J^{(2)}(t,x,s)\big]}{s-\zeta_+}\, \dd s, \\
&\Pi^{(2)}(t,x,\zeta):= \frac{1}{2\pi\ii} \int_\Upsilon \frac{I-J^{(2)}(t,x,s)}{s-\zeta_+}\, \dd s,
 \end{align*}
and then we find $M^{(2)}(t,x,z)$ by the formula
\begin{equation}\label{sol2_RH0}
 M^{(2)}(t,x,z)=I+ \frac{1}{2\pi\ii}\int_\Upsilon \frac{\big[W^{(2)}(t,x,s)+I\big]\big[I-J^{(2)}(t,x,s)\big]}{s-z}\, \dd s,\qquad z\in \mC\setminus\Upsilon.
\end{equation}
Since ${\rm e}^{\pm 2\ii\theta(t,x,z)}\to 0$ as $z\to \infty$, $z\in\mC_\pm$, respectively (see the case $t>x$, $\Lambda<\infty$ in Appendix~\ref{appendixA}), then for sufficiently large $z\in\widetilde{\Sigma}$ (i.e., on the parts of the jump contour $\Upsilon$ contained in a~neighborhood of infinity), the matrix $I-J^{(2)}(t,x,z)$ decreases exponentially and its derivatives of any order in $t$ and $x$ decrease fast enough so that the integrals that arise when differentiating are convergent.
Consequently, the solution $W^{(2)}(t,x,\zeta)$ of the equation~\eqref{IntEq2-RH0} and the function $M^{(2)}(t,x,z)$~\eqref{sol2_RH0} are infinitely differentiable in $t$ and $x$. Since $M^{(2)}(t,x,z)$ and $G^{(2)}(t,x,z)$ are infinitely differentiable in $t$, $x$, then $M^{(1)}(t,x,z)$ and $M(t,x,z)$ also have this property.

\emph{Consider $\Lambda=\infty$ $($as before, $t>x)$}.

We introduce the smooth curves $L_2=L_2(\xi)$, $\overline{L_2}=\overline{L_2}(\xi)$ and the domains $D_2=D_2(\xi)\subset \mC_+\setminus[E,\Re{E})$, $D_4=D_4(\xi)\subset\mC_+$, $\overline{D_2}=\overline{D_2}(\xi)\subset\mC_-\setminus\big(\Re{E},\overline{E}\big]$ and $\overline{D_4}=\overline{D_4}(\xi)\subset\mC_-$, symmetric with respect to $\R$, where $\xi=\frac{x}{4(t-x)}>0$, which are such that (see Figures~\ref{conturG2-LaInf1} and~\ref{conturG2-LaInf2}):%
\begin{itemize}[label=\textendash]\itemsep=0pt
\item all sufficiently large $z$ belonging to $L_2(\xi)$ and $\overline{L_2}(\xi)$ (i.e., the parts of $L_2$ and $\overline{L_2}$ contained in a neighborhood of infinity) are contained in the domain bounded from above by the curve~$\gamma^+_\xi$ and from below by the curve $\gamma^-_\xi$, where $\gamma^\pm_\xi$ are defined by the equality $\int_{-\infty}^\infty \frac{n(s)\, \dd s}{|s-z|^2}=\frac{1}{\xi}$ \big( $\Re(\ii\theta)=0$ on $\gamma^\pm_\xi$; see the case $t>x$, $\Lambda=\infty$ in Appendix~\ref{appendixA} and Figure~\ref{ReiTheta2}\big);
\item the interval $[E,\overline{E}]$ lies in the domain bounded by $L_2$ and $\overline{L_2}$;
\item the domain $D_2$ is bounded by $L_2$ and $\R\cup[E,\Re{E})$;
\item the domain $\overline{D_2}$ is bounded by $\overline{L_2}$ and $\R\cup\big(\Re{E},\overline{E}\big]$.
\end{itemize}
Then complex $z$-plane is decomposed into the parts $\mC=L_2\cup\overline{L_2}(\xi)\cup\Sigma\cup D_2\cup D_4\cup \overline{D_2}\cup \overline{D_4}$. Define the contour $\widetilde{\Sigma}=L_2\cup\overline{L_2}$, where $L_2$ and $\overline{L_2}$ are oriented from the left to the right. The orientation on $\Sigma$ remains the same as above.
\begin{figure}[t] \centering
 \begin{tikzpicture}[smooth,scale=1,thick,font=\small]
\draw[black] (-6,0) -- (6,0);
\draw[black] (-1.2,0.85) -- (-1.2,-0.85);
\draw[color=black, fill=black](-1.2,0.85) circle(0.03);
\draw[color=black, fill=black](-1.2,-0.85) circle(0.03);
\draw[black,very thick] (-6,0.15) .. controls (-1.3,1.65) and (1.3,1.65).. (6,0.15);
\draw[dashed, black,very thick] (-6,0.5) .. controls (-1.2,2.1) and (1.2,2.1).. (6,0.5);
\draw[black,very thick] (-6,-0.15) .. controls (-1.3,-1.65) and (1.3,-1.65).. (6,-0.15);
\draw[dashed, black,very thick] (-6,-0.5) .. controls (-1.2,-2.1) and (1.2,-2.1).. (6,-0.5);
\node[black] at (-2.1,0.4) {$D_{2}$};
\node[black] at (-2.1,-0.4) {$\overline{D_{2}}$};
\node[black] at (-2.7,1.8) {$D_{4}$};
\node[black] at (-2.7,-1.8) {$\overline{D_{4}}$};
\node[black] at (2,0.8) {$L_{2}$};
\node[black] at (2,-0.8) {$\overline{L_{2}}$};
\node[black] at (-0.91,0.8) {$E$};
\node[black] at (-0.91,-0.8) {$\overline{E}$};
\node[black] at (0.6,0.2) {$\mathbb{R}$};
\node[black] at (4.4,1.34) {$\gamma_{\xi}^{+}$};
\node[black] at (4.4,-1.34) {$\gamma_{\xi}^{-}$};
 \end{tikzpicture}
 \caption{The contours $\Sigma=\R\cup\big(E,\overline{E}\big)$ and $\widetilde{\Sigma}=L_2\cup\overline{L_2}$, the curves $\gamma^\pm_\xi$ and the domains $D_i$, $\overline{D_i}$, $i=2,4$, in the case when the interval $\big[E,\overline{E}\big]$ lies inside the domain bounded by the curves $\gamma^+_\xi$, $\gamma^-_\xi$.}\label{conturG2-LaInf1}
\end{figure}
\begin{figure}[t] \centering
 \begin{tikzpicture}[smooth,scale=1,thick,font=\small]
\draw[black] (-6,0) -- (6,0);
\draw[black] (0.4,0) -- (6,0);
\draw[black] (-2,1.6) -- (-2,-1.6);
\draw[color=black, fill=black](-2,1.6) circle(0.03);
\draw[color=black, fill=black](-2,-1.6) circle(0.03);
\draw[black,very thick] (-6,0.15) .. controls (-1,2.85) and (1,2.85).. (6,0.15);
\draw[dashed, black,very thick] (-6,0.4) .. controls (-1.2,1.9) and (1.2,1.9).. (6,0.4);
\draw[black,very thick] (-6,-0.15) .. controls (-1,-2.85) and (1,-2.85).. (6,-0.15);
\draw[dashed, black,very thick] (-6,-0.4) .. controls (-1.2,-1.9) and (1.2,-1.9).. (6,-0.4);
\node[black] at (-0.7,0.8) {$D_{2}$};
\node[black] at (-0.7,-0.8) {$\overline{D_{2}}$};
\node[black] at (-4.5,1.6) {$D_{4}$};
\node[black] at (-4.5,-1.6) {$\overline{D_{4}}$};
\node[black] at (2.2,0.9) {$\gamma_{\xi}^{+}$};
\node[black] at (2.2,-0.9) {$\gamma_{\xi}^{-}$};
\node[black] at (-1.75,1.7) {$E$};
\node[black] at (-1.75,-1.7) {$\overline{E}$};
\node[black] at (1,0.2) {$\mathbb{R}$};
\node[black] at (4.4,1.34) {$L_{2}$};
\node[black] at (4.4,-1.345) {$\overline{L_{2}}$};
 \end{tikzpicture}
 \caption{The contours $\Sigma=\R\cup\big(E,\overline{E}\big)$ and $\widetilde{\Sigma}=L_2\cup\overline{L_2}$, the curves $\gamma^\pm_\xi$ and the domains $D_i$, $\overline{D_i}$, $i=2,4$, in the case when the interval $\big[E,\overline{E}\big]$ intersects $\gamma^+_\xi$, $\gamma^-_\xi$.}\label{conturG2-LaInf2}
\end{figure}
Further, we transform the matrix $M(t,x,z)$ into
\[
M^{(2)}(t,x,z)= M(t,x,z)G^{(2)}(t,x,z),
\]
where
\begin{alignat*}{3}
&G^{(2)}(t,x,z)= \begin{pmatrix} 1 & -r(z) {\rm e}^{-2\ii\theta(t,x,z)} \\ 0 & 1 \end{pmatrix}, \qquad && z\in D_2, & \nonumber\\
 &\hphantom{G^{(2)}(t,x,z)}{} = \begin{pmatrix} 1 & 0 \\ -r(z) {\rm e}^{2\ii\theta(t,x,z)} & 1 \end{pmatrix}, && z\in\overline{D_2},& \nonumber\\
 &\hphantom{G^{(2)}(t,x,z)}{}= I, \qquad && z\in D_4\cup\overline{D_4}.& 
\end{alignat*}
Then ${M^{(2)}_-(t,x,z) = M^{(2)}_+(t,x,z)J^{(2)}(t,x,z)}$, $z\in \widetilde{\Sigma}\cup\Sigma\setminus\{\Re E\}$, where
\[
J^{(2)}(t,x,z)= \big(G_+^{(2)}(t,x,z)\big)^{-1}J(t,x,z)G_-^{(2)}(t,x,z),\qquad z\in \widetilde{\Sigma}\cup\Sigma,
\]
has the following form:
\begin{alignat}{3}
&J^{(2)}(t,x,z)=I,\qquad && z\in\Sigma, &\nonumber\\
 &\hphantom{J^{(2)}(t,x,z)}{} =\begin{pmatrix} 1 & -r(z) {\rm e}^{-2\ii\theta(t,x,z)} \\ 0 & 1 \end{pmatrix}, \qquad && z\in L_2,& \nonumber \\
 &\hphantom{J^{(2)}(t,x,z)}{}=\begin{pmatrix} 1 & 0 \\ r(z) {\rm e}^{2\ii\theta(t,x,z)} & 1 \end{pmatrix}, \qquad && z\in \overline{L_2}.& \label{case2J2-dif2}
\end{alignat}

As above, we consider the singular integral equation~\eqref{IntEq2-RH0}, where $\Upsilon=\widetilde{\Sigma}=L_2\cup\overline{L_2}$, and find $M^{(2)}(t,x,z)$ \big($G^{(2)}(t,x,z)\to I$ and $M^{(2)}(t,x,z)\to I$ as $z\to\infty$\big) by the formula~\eqref{sol2_RH0}. Since ${\rm e}^{-2\ii\theta(t,x,z)}\to 0$ and ${\rm e}^{2\ii\theta(t,x,z)}\to 0$ as $z\to \infty$ for $z$ from the domains bounded by $\R$, \smash{$\gamma^+_\xi$} and $\R$, $\gamma^-_\xi$ respectively (see Figure~\ref{ReiTheta2} in Appendix~\ref{appendixA}), then for sufficiently large \smash{$z\in\widetilde{\Sigma}$} (i.e., on the parts of the jump contour $\Upsilon$ contained in a neighborhood of infinity), the matrix $I-J^{(2)}(t,x,z)$ decreases exponentially and its derivatives of any order in $t$ and $x$ decrease fast enough. Consequently, the solution $W^{(2)}(t,x,\zeta)$ of the singular integral equation, the function $M^{(2)}(t,x,z)$ and, hence, the function $M(t,x,z)$ are infinitely differentiable in $t$ and $x$.
\end{proof}

\begin{Theorem}\label{solvMBbyM}
Let
 \begin{equation}\label{Phi_copy}
\Phi(t,x,z):= M(t,x,z) {\rm e}^{-\ii\theta(t,x,z)\sigma_3},
 \end{equation}
where $M(t,x,z)$ is a solution of the basic problem RH$_0$. Then $\Phi(t,x,z)$ is a unique solution of the AKNS system
 \begin{align}
&\Phi_t=-\big(\ii z\sigma_3+H(t,x)\big)\Phi, \label{teq1} \\
&\Phi_x=\big(\ii z\sigma_3+H(t,x)-\ii G(t,x,z)\big)\Phi,\qquad G(t,x,z)=\frac{1}{4}\int_{-\infty}^\infty \frac{F(t,x,s)n(s)\, \dd s}{s-z}, \label{xeq1}
 \end{align}
where $t\in\R_+$, $x\in (0,L)$, $L\le\infty$, and $z\in\mC\setminus \Sigma^{\rm cl}$, $\Sigma^{\rm cl}=\Sigma\cup \big\{E,\overline{E}\big\}$.
In the system~\eqref{teq1} and~\eqref{xeq1}, the function $H(t,x)$ is defined by
 \begin{equation}\label{H}
H(t,x)=\frac{1}{2} \begin{pmatrix}
 0&{\mathcal E}(t,x)\\
 -\overline{\mathcal E(t,x)}&0\end{pmatrix}=-\ii [\sigma_3, m(t,x)],
 \end{equation}
where
\begin{equation}\label{m_lim}
m(t,x)=\lim_{z\to\infty}z(M(t,x,z)-I),\qquad {\mathcal E}(t,x)=-4\ii \lim_{z\to\infty} (zM(t,x,z))_{12},
\end{equation}
the function $F(t,x,\la)$ is defined by
\begin{align}
F(t,x,\la)=&{} \begin{pmatrix}
 {\mathcal N}(t,x,\la) &\rho(t,x,\la) \\ \overline{\rho(t,x,\la)} &-{\mathcal N}(t,x,\la) \end{pmatrix} \nonumber\\
 =&{}- M(t,x,\la+\ii0) {\rm e}^{-\ii\la t\sigma_3}M^{-1}(0,x,\la+\ii0)\sigma_3 M(0,x,\la+\ii0) {\rm e}^{\ii\la t\sigma_3}\nonumber \\
 &{}\times M^{-1}(t,x,\la+\ii0),\qquad \la\in\R, \label{F}
\end{align}
where $M(t,x,\la+\ii0)=M_+(t,x,\la)$ is defined appropriately at the point $\la=\Re E$ so that $F(t,x,\la)$ is continuous in $\la\in\R$, and these functions satisfy the Maxwell--Bloch equations in the matrix form
 \begin{align}
&H_t(t,x)+H_x(t,x)-\frac{1}{4}\int_{-\infty}^\infty [\sigma_3,F(t,x,s)]n(s){\rm d}s =0, \label{HFeq1} \\
&F_t(t,x,\la)+[\ii\la\sigma_3+H(t,x),F(t,x,\la)]=0, \label{HFeq2}
 \end{align}
where $t\in\R_+$, $x\in (0,L)$, $\la\in\R$, and the initial and boundary conditions
\[
F(0,x,\la)\equiv -\sigma_3,\qquad H(0,x)\equiv 0,\qquad H(t,0)=\frac{1}{2} \begin{pmatrix}0&\mathcal E_{\rm in}(t)\\-\overline{\mathcal E_{\rm in}(t)}&0\end{pmatrix},
\]
that is, the functions ${\mathcal E}(t,x)$, $\rho(t,x,\la)$ and $\mathcal N(t,x,\la)$ satisfy the MB equations~\eqref{MB1a} and the initial and boundary conditions~\eqref{MBini} and~\eqref{MBboundAe}.
 \end{Theorem}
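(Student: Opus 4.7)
The plan is to establish in sequence that (i) $\Phi(t,x,z)=M(t,x,z)e^{-\ii\theta(t,x,z)\sigma_3}$ satisfies the $t$-equation~\eqref{teq1} with $H$ as in~\eqref{H}, (ii) the function $F$ defined by~\eqref{F} automatically satisfies~\eqref{HFeq2}, (iii) $\Phi$ satisfies the $x$-equation~\eqref{xeq1} with $G$ the Cauchy integral of this same $F$, and (iv) the compatibility of (i) and (iii) reduces to~\eqref{HFeq1}, after which the initial and boundary conditions are read off.

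For (i), set $\widetilde J(t,x,z):=\Phi_+^{-1}\Phi_-=e^{\ii\theta_+\sigma_3}J(t,x,z)e^{-\ii\theta_-\sigma_3}$ on $\Sigma$. A direct case-by-case inspection against~\eqref{J} shows that $\widetilde J$ is $t$-independent: on $(E,\Re E)$ and $(\Re E,\overline E)$ the phase $\theta$ has no jump, so $\widetilde J$ equals $\bigl(\begin{smallmatrix}1&h(z)\\0&1\end{smallmatrix}\bigr)$ and $\bigl(\begin{smallmatrix}1&0\\h(z)&1\end{smallmatrix}\bigr)$ respectively; on $\R$ the $e^{\pm 2\ii\theta_\pm}$ factors in~\eqref{J} combine with the conjugating exponentials into $e^{\pm\ii(\theta_+-\theta_-)}=e^{\pm\pi n(\la)x/2}$, which is indeed free of $t$. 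Hence $\Phi_t\Phi^{-1}$ has no jump across $\Sigma$; the $(z-E)^{-1/4}$-type singularities at $E,\overline E$ and the bounded behavior at $\Re E$ are removable for the logarithmic derivative, so $\Phi_t\Phi^{-1}$ is entire in $z$. Inserting $M=I+m(t,x)/z+O(z^{-2})$ and $\theta_t=z$ gives $\Phi_t\Phi^{-1}=-\ii z\sigma_3+\ii[\sigma_3,m]+O(z^{-1})$, and Liouville forces $\Phi_t\Phi^{-1}=-(\ii z\sigma_3+H)$ with $H=-\ii[\sigma_3,m]$, matching~\eqref{H} and~\eqref{m_lim}.

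For (ii), since $\theta_+(t,x,\la)-\theta_+(0,x,\la)=\la t$, formula~\eqref{F} collapses to $F(t,x,\la)=-\Phi_+(t,x,\la)S(x,\la)\Phi_+^{-1}(t,x,\la)$ with $S(x,\la):=\Phi_+^{-1}(0,x,\la)\sigma_3\Phi_+(0,x,\la)$. Differentiating in $t$ and invoking the $t$-equation from (i) (whose coefficient $U=-\ii z\sigma_3-H$ is entire in $z$, so it applies verbatim to $\Phi_+$) yields $F_t=[U,F]$, i.e.~\eqref{HFeq2}, together with $F(0,x,\la)\equiv-\sigma_3$. For (iii), set $\mathcal V:=\Phi_x\Phi^{-1}$; differentiating $\Phi_-=\Phi_+\widetilde J$ in $x$ and using $\widetilde J_t\equiv 0$ gives $\mathcal V_+-\mathcal V_-=-\Phi_+(\widetilde J_x\widetilde J^{-1})\Phi_+^{-1}$. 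On $\Sigma\setminus[-\Lambda,\Lambda]$ this vanishes by the previous case analysis, so $\mathcal V$ is analytic on $\mC\setminus[-\Lambda,\Lambda]$. On $[-\Lambda,\Lambda]$, exploiting the $t$-independence of $\widetilde J$ to evaluate at $t=0$, a direct algebraic manipulation identifies $\widetilde J_x\widetilde J^{-1}\big|_{t=0}=\tfrac{\pi n(\la)}{2}S(x,\la)$, and hence $\mathcal V_+-\mathcal V_-=\tfrac{\pi n(\la)}{2}F(t,x,\la)$. Meanwhile $\theta_x=-\eta(z)=-z+O(z^{-1})$ gives the asymptotic $\mathcal V=\ii z\sigma_3+H+O(z^{-1})$, so $\mathcal V-\ii z\sigma_3-H$ is analytic off $[-\Lambda,\Lambda]$, vanishes at $\infty$, and has the prescribed jump; the Sokhotski--Plemelj representation yields $\mathcal V=\ii z\sigma_3+H-\ii G$ with $G$ as in~\eqref{xeq1}, and the $\pm$-boundary values recover the $x^\pm$-equations~\eqref{pmxeq}.

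Finally, the compatibility identity $U_x-V_t+[U,V]=0$ of the $t$- and $x$-equations from (i) and (iii), upon substituting the explicit $U$ and $V$ and matching the polynomial and Cauchy-integral parts in $z$, reduces to~\eqref{HFeq1} by the standard matrix derivation of the MB equations recalled at the end of Section~\ref{Intro}. The initial value $F(0,x,\la)\equiv-\sigma_3$ is immediate from~\eqref{F}; $H(0,x)\equiv 0$ follows from Theorem~\ref{causality-region} applied to $t=0\le x$; and $H(t,0)=H_0(t)$ is hard-wired into the RH data, since $r(z)$ and $h(z)$ of RH$_0$ are constructed in Section~\ref{BasicSolAKNS} from the background plane wave $\mathcal E_{\rm bg}$ whose trace at $x=0$ equals $\mathcal E_{\rm in}(t)=A_0e^{\ii\omega_0 t}$, forcing $\Phi(t,0,z)=\Phi_0(t,z)$ of~\eqref{Phi_AKNS} and hence $H(t,0)=H_0(t)$. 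I expect the main obstacle to be the identification $\widetilde J_x\widetilde J^{-1}\big|_{t=0}=\tfrac{\pi n(\la)}{2}S(x,\la)$ on $[-\Lambda,\Lambda]$, which requires unwinding the explicit form of $\widetilde J$ at $t=0$ in terms of the initial-$t$ basic solutions of Section~\ref{PropertyBasicSol} together with the Sokhotski--Plemelj jump $G_+-G_-=\tfrac{\pi\ii}{2}Fn$.
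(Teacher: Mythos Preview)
Your proposal is correct and follows essentially the same ``dressing'' argument as the paper: show that $\widetilde J=e^{\ii\theta_+\sigma_3}Je^{-\ii\theta_-\sigma_3}$ is $t$-independent (giving the $t$-equation via Liouville), that it is $x$-independent off $[-\Lambda,\Lambda]$ with the explicit jump $\frac{\pi n}{2}F$ there (giving the $x$-equation via Sokhotski--Plemelj), and then read off the MB equations from compatibility and the initial/boundary values from the explicit $M$ at $t=0$ and $x=0$. The only cosmetic difference is the order in which $F$ is introduced: the paper \emph{defines} $F$ from the computed jump of $\Phi_x\Phi^{-1}$ (via the factorization $\widetilde J=\bigl(\begin{smallmatrix}1&-r\\0&1\end{smallmatrix}\bigr)e^{\pi n x\sigma_3/2}\bigl(\begin{smallmatrix}1&0\\r&1\end{smallmatrix}\bigr)$) and afterwards identifies it with~\eqref{F}, whereas you take~\eqref{F} as the definition and then match it to the jump; your order has the mild advantage that~\eqref{HFeq2} is immediate. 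One point you should add: the paper invokes the Schwarz-type symmetries $\sigma_2\overline{J(t,x,\bar z)}\sigma_2=J(t,x,z)$ on $\Sigma\setminus\R$ and $J^*(t,x,\la)=J(t,x,\la)$ on $\R$ to conclude that $H$ is anti-Hermitian and $F$ is Hermitian with zero trace, which is what forces the specific structures $H=\frac12\bigl(\begin{smallmatrix}0&\mathcal E\\-\overline{\mathcal E}&0\end{smallmatrix}\bigr)$ and $F=\bigl(\begin{smallmatrix}\mathcal N&\rho\\\overline{\rho}&-\mathcal N\end{smallmatrix}\bigr)$ asserted in the theorem; your outline does not mention this step.
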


 \begin{proof}
First, note that $\Phi(t,x,z)$~\eqref{Phi_copy} is infinitely differentiable in $t$ and $x$ since $M(t,x,z)$ (see Theorem~\ref{smoothnessRH}) and $\theta(t,x,z)$~\eqref{theta1} have this property and that the solution $M(t,x,z)$ of the basic problem RH$_0$ is unique (see Theorem~\ref{Ex-Uniq}). Since $M(t,x,z)$ satisfies the normalization condition $M(t,x,z)=I+O\big(z^{-1}\big)$, $z\to\infty$, and can be represented in the form~\eqref{sol_RH0}, then it has the asymptotics
\[
M(t,x,z)=I+m(t,x)z^{-1}+O\big(z^{-2}\big),\qquad z\to\infty, \quad z\in\mC\setminus\Sigma,
\]
where $m(t,x)$ has the form~\eqref{m_lim}, i.e., $m(t,x)=\lim_{z\to\infty}z(M(t,x,z)-I)$ (the limit at $\infty$ is nontangential). In addition,
\[
M_t(t,x,z)=m_t(t,x)z^{-1}+O\big(z^{-2}\big),\qquad z\to\infty, \quad z\in\mC\setminus\Sigma.
\]
Since $M(t,x,z)$ is a solution of the basic problem RH$_0$ and $\Phi(t,x,z)= M(t,x,z) {\rm e}^{-\ii\theta(t,x,z)\sigma_3}$, then $\Phi(t,x,z)$ is analytic in $z\in \mC\setminus\Sigma^{\rm cl}$, its boundary values $\Phi_\pm(t,x,z)$ are continuous in $z\in\Sigma\setminus\{\Re E\}$, and $\Phi(t,x,z)$ satisfies the boundary condition
\begin{equation}\label{BC_Phi}
\Phi_-(t,x,z)=\Phi_+(t,x,z) {\rm e}^{\ii\theta_+(t,x,z)\sigma_3}J(t,x,z) {\rm e}^{-\ii\theta_-(t,x,z)\sigma_3},\qquad z\in\Sigma\setminus\{\Re E\},
\end{equation}
where the matrix $ {\rm e}^{\ii\theta_+(t,x,z)\sigma_3}J(t,x,z) {\rm e}^{-\ii\theta_-(t,x,z)\sigma_3}$ is independent on $t$ for all $z\in\mC$ and is independent on $x$ for $z\in\mC\setminus [-\Lambda,\Lambda]$ (since $\theta_+(t,x,z)=\theta_-(t,x,z)$ for $z\in (E,\overline{E})\setminus\{\Re E\}$ and $z=\la\in \R\setminus [-\Lambda,\Lambda]$ and $\theta_-(t,x,\la)-\theta_+(t,x,\la)=\pi\ii n(\la)x/2$ for $\la\in[-\Lambda,\Lambda]$). This implies
\[
\frac{\partial\Phi_-(t,x,z)}{\partial t}\Phi^{-1}_-(t,x,z)= \frac{\partial\Phi_+(t,x,z)}{\partial t}\Phi^{-1}_+(t,x,z)
\]
for $z\in\Sigma\setminus\{\Re E\}$. Consequently, the logarithmic derivative $\Phi_t(t,x,z)\Phi^{-1}(t,x,z)$ is analytic in $z\in\mC\setminus\bigl\{\Re E,E,\overline{E}\bigr\}$ and the points $\Re E$, $E$, $\overline{E}$ are removable singularities due to the properties of $M(t,x,z)$. Thus, we can define $\Phi_t(t,x,z)\Phi^{-1}(t,x,z)$ at these points so that it becomes analytic in $z\in\mC$. Using the above asymptotic expansions for $M(t,x,z)$ and $M_t(t,x,z)$ and~\eqref{Phi_copy}, we obtain
\[
\Phi_t(t,x,z)\Phi^{-1}(t,x,z)= -\ii z\sigma_3+\ii[\sigma_3,m(t,x)]+ O\big(z^{-1}\big),\qquad z\to\infty.
\]
Hence, by the Liouville theorem,
\[
\Phi_t(t,x,z)\Phi^{-1}(t,x,z)=-\ii z\sigma_3-H(t,x),
\]
where $H(t,x)=-\ii[\sigma_3,m(t,x)]$ and, therefore, it has the structure $H(t,x)=\left(\begin{smallmatrix}0&q(t,x)\\p(t,x)&0\end{smallmatrix}\right)$ (since $H \sigma_3+\sigma_3 H=0$).
Using the symmetries of the contour $\Sigma$ and the jump matrix $J(t,x,z)$, namely, $\sigma_2\overline{J(t,x,\bar{z})}\sigma_2=(J^*(t,x,\bar{z}))^{-1}=J(t,x,z)$ for $z\in\Sigma\setminus\R$ and $J^*(t,x,z)=J(t,x,z)$ for $z=\la\in\R$, we find that the matrix $H(t,x)$ is anti-Hermitian, i.e., $H(t,x)= -H^*(t,x)$, and therefore $q(t,x)=-\overline{p(t,x)}=-2\ii m_{12}(t,x)$, where $m(t,x)=(m_{ij}(t,x))_{1\le i,j\le 2}$ is defined by~\eqref{m_lim}. We set $q(t,x):=\mathcal E(t,x)/2$, then $\mathcal E(t,x)$ has the form~\eqref{E1}, i.e., ${\mathcal E}(t,x)=-4\ii m_{12}(t,x)=-4\ii \lim_{z\to\infty} \big(zM(t,x,z)\big)_{12}$. Also, we denote $U(t,x,z):=\Phi_t(t,x,z)\Phi^{-1}(t,x,z)$. Thus, it is proved that the matrix function $\Phi(t,x,z)$ satisfies the $t$-equation~\eqref{teq1} which becomes the $t$-equation~\eqref{teq}, where $U(t,x,\la)=-(\ii\la\sigma_3+H(t,x))$~\eqref{U-MB}, as $z=\la+\ii 0$.

Using~\eqref{BC_Phi}, we obtain that
\[
\frac{\partial\Phi_-(t,x,z)}{\partial x}\Phi^{-1}_-(t,x,z)= \frac{\partial\Phi_+(t,x,z)}{\partial x}\Phi^{-1}_+(t,x,z)
\]
for $z\in\Sigma\setminus [-\Lambda,\Lambda]$, $z\ne \Re E$. Consequently, the logarithmic derivative $\Phi_x(t,x,z)\Phi^{-1}(t,x,z)$ is analytic for $z\in\mC\setminus([-\Lambda,\Lambda]\cup\{\Re E,E,\overline{E}\})$ and the points $E$, $\overline{E}$, as well as $\Re E$ if $\Re E\not\in [-\Lambda,\Lambda]$, are removable singularities. Thus, we can modify $\Phi_x(t,x,z)\Phi^{-1}(t,x,z)$ so that it becomes analytic in $z\in\mC\setminus[-\Lambda,\Lambda]$. Taking into account asymptotic expansions for $M(t,x,z)$ and $M_x(t,x,z)$ and the formulas~\eqref{Phi_copy} and~\eqref{eta}, we obtain
\[
\Phi_x(t,x,z)\Phi^{-1}(t,x,z)=\ii z\sigma_3 +H(t,x) +O\big(z^{-1}\big),\qquad z\to\infty,\quad z\in\mC\setminus[-\Lambda,\Lambda].
\]
Note that in~\eqref{BC_Phi}, for $z=\la\in[-\Lambda,\Lambda]$ the matrix $\widetilde{J}(x,\la):= {\rm e}^{\ii\theta_+(t,x,\la)\sigma_3}J(t,x,\la){\rm e}^{-\ii\theta_-(t,x,\la)\sigma_3}$ can be represented as
\[
\widetilde{J}(x,\la)= \left(\begin{matrix} 1 & -r(\la) \\ 0 & 1	\end{matrix}\right) {\rm e}^{\frac{\pi n(\la)x\sigma_3}{2}} \left(\begin{matrix} 1 & 0 \\ r(\la) & 1	\end{matrix}\right).
\]
 Thus, using the boundary condition~\eqref{BC_Phi} for $z=\la\in\R$, i.e., $\Phi_-(t,x,\la)=\Phi_+(t,x,\la)\widetilde{J}(x,\la)$, where $\Phi_\pm(t,x,\la)=\Phi(t,x,\la\pm\ii0)$, we obtain that $\Phi_x(t,x,z)\Phi^{-1}(t,x,z)$ has the following jump across $[-\Lambda,\Lambda]$:
\begin{align*}
&{}\Phi_x(t,x,\la+\ii0)\Phi^{-1}(t,x,\la+\ii0)- \Phi_x(t,x,\la-\ii0)\Phi^{-1}(t,x,\la-\ii0) \\
&{}\qquad = -\Phi(t,x,\la+\ii0)\widetilde{J}_x(x,\la)\widetilde{J}^{-1}(x,\la) \Phi^{-1}(t,x,\la+\ii0) \\
&{}\qquad =-\frac{\pi n(\la)}{2}\Phi(t,x,\la+\ii0)\begin{pmatrix} 1 & -r(\la) \\ 0 & 1	\end{pmatrix}\sigma_3\begin{pmatrix} 1 & r(\la) \\ 0 & 1	\end{pmatrix} \Phi^{-1}(t,x,\la+\ii0),
\end{align*}
and we denote
\[
F(t,x,\la):=-\Phi(t,x,\la+\ii0)\begin{pmatrix} 1 & -r(\la) \\ 0 & 1	\end{pmatrix}\sigma_3\begin{pmatrix} 1 & r(\la) \\ 0 & 1	\end{pmatrix} \Phi^{-1}(t,x,\la+\ii0).
\]
 Thus, the function $\Phi_x(t,x,z)\Phi^{-1}(t,x,z)$ can be found by the formula
\[
 \Phi_x(t,x,z)\Phi^{-1}(t,x,z)=\ii z\sigma_3 +H(t,x)+ \frac{1}{4\ii}\int_{[-\Lambda,\Lambda]}\frac{n(s)F(t,x,s)}{s-z}\, \dd s, \qquad z\in\mC\setminus[-\Lambda,\Lambda].
\]
Since $J^*(t,x,\la)=J(t,x,\la)$, $\la\in\R$, then $\widetilde{J}^*(x,\la)=\widetilde{J}(x,\la)$, $\la\in\R$. This yields
\begin{align*}
\frac{\pi n(\la)}{2}F(t,x,\la)& =(\Phi^*(t,x,\la+\ii0))^{-1}\Phi^*_x(t,x,\la+\ii0)- (\Phi^*(t,x,\la-\ii0))^{-1}\Phi^*_x(t,x,\la-\ii0)\\
& =\frac{\pi n(\la)}{2}F^*(t,x,\la),
\end{align*}
 and hence the matrix $F(t,x,\la)$ is Hermitian, i.e., $F(t,x,\la)=F^*(t,x,\la)$.
In addition, since $\det\Phi(t,x,z)=\det M(t,x,z) \equiv 1$, then
$\tr(\Phi_x(t,x,\la\pm\ii0)\Phi^{-1}(t,x,\la\pm\ii0))= (\det\Phi(t,x,\la\pm\ii0))_x\equiv0$, and therefore $\tr F(t,x,\la)\equiv 0$. Consequently, $F(t,x,\la)$ has the structure
\[
F(t,x,\la):= \begin{pmatrix}{\mathcal N}(t,x,\la)&{\mathcal\rho}(t,x,\la)\\
\overline{{\mathcal\rho}(t,x,\la)}&-{\mathcal N}(t,x,\la)\end{pmatrix}.
\]
Thus, $\Phi(t,x,z)$ satisfies the $x$-equation~\eqref{xeq1}.
It is proved that the matrix function $\Phi(t,x,z)$ satisfies the $t$- and $x$-equations~\eqref{teq1} and~\eqref{xeq1}, that is,
\begin{alignat}{3}
 & \Phi_t=U(t,x,z)\Phi, \qquad && U(t,x,z)=-\ii z\sigma_3-H(t,x), & \label{teq1z} \\
 & \Phi_x =V(t,x,z)\Phi, \qquad && V(t,x,z)=-U(t,x,z)-\ii G(t,x,z), & \nonumber 
\end{alignat}
where
$G(t,x,z)=\frac{1}{4}\int_{-\infty}^\infty \frac{F(t,x,s)n(s)}{s-z}\, \dd s$, $z\in\mC\setminus[-\Lambda,\Lambda]$, and $\Phi_\pm(t,x,z)=\Phi(t,x,\la\pm\ii0)$ for $z=\la\in\R$ satisfy the $t$-equation~\eqref{teq1z} and the $x^\pm$-equations
\begin{equation}\label{pmxeq1}
 \Phi_x =V_\pm(t,x,\la)\Phi,
\end{equation}
where $V_\pm(t,x,\la)=-U(t,x,\la)-\ii G_\pm(t,x,\la)=\ii\la\sigma_3+H(t,x)-\ii G_\pm(t,x,\la)$~\eqref{V_pm-MB}, $G_\pm(t,x,\la)=G(t,x,\la\pm\ii0)$ has the form~\eqref{G_pm}, $G_\pm(t,x,\la)=G(t,x,z)$ for $z=\la\in\R\setminus[-\Lambda,\Lambda]$ and $G(t,x,\la):=\frac{1}{4} {\rm p.v.}\int_{-\infty}^\infty \frac{F(t,x,s)n(s)}{s-\la}\, \dd s$ for $\la\in[-\Lambda,\Lambda]$.
The compatibility conditions of the systems~\eqref{teq1z},~\eqref{pmxeq1} imply~\eqref{ZCC_pm}. Using~\eqref{ZCC_pm}, we obtain the equation
\begin{align*}
&{} H_x(t,x)+H_t(t,x)-\frac{1}{4}\int_{-\infty}^\infty [\sigma_3,F(t,x,s)]n(s)\, \dd s \\
&{}\qquad -\frac{\ii}{4} \int_{-\infty}^\infty \frac{(F_t(t,x,s)+[\ii s\sigma_3+H(t,x),F(t,x,s)])n(s)}{s-\la\mp\ii0}\, \dd s=0
\end{align*}
that yields the MB equations in the matrix form~\eqref{HFeq1} and~\eqref{HFeq2} which are equivalent to the MB equations~\eqref{MB1a}. Thus, the matrices $\Phi(t,x,\la\pm\ii0)$ satisfy the systems of the equations~\eqref{teq1z} and~\eqref{pmxeq1} which coincide with the AKNS systems~\eqref{teq},~\eqref{pmxeq}, and the compatibility condition~\eqref{ZCC}, where $U(t,x,\la)$, $V(t,x,\la)$ have the form~\eqref{U-MB} and~\eqref{V-MB}, is fulfilled. Hence the functions ${\mathcal E}(t,x)$, ${\mathcal N}(t,x,\la)$ and ${\mathcal\rho}(t,x,\la)$ satisfy the MB equations~\eqref{MB1a}.

Let us proof that the initial and boundary conditions~\eqref{MBini} and~\eqref{MBboundAe} are fulfilled. Since $t=0\le x$ for all $x\in [0,L)$ ($L\le\infty$), then for $t=0$ as in the proof of Theorem~\ref{causality-region} we obtain that ${\mathcal E(0,x)\equiv 0}$, $\rho(0,x,\la)\equiv 0$ and $\mathcal N(0,x,\la)\equiv -1$ or $H(0,x)\equiv 0$ and $F(0,x,\la)\equiv -\sigma_3$. Taking into account the form of the solution $M(t,x,z)$~\eqref{M} and Theorem~\ref{Theorem_SolAKNS}, the solution of the basic problem RH$_0$ for $x=0$ has the form
\begin{equation*}
M(t,0,z)\!=\!\begin{cases}
 \begin{pmatrix}
 \Psi[1](t,0,z){\rm e}^{\ii zt} & \displaystyle\frac{\Phi[2](t,z)}{a(z)} {\rm e}^{-\ii zt}
 \end{pmatrix},& z\in \mC_+\setminus [E,\Re E), \vspace{1mm}\\
 \begin{pmatrix}
 \displaystyle\frac{\Phi[1](t,z)}{a(z)} {\rm e}^{\ii zt} & \Psi[2](t,0,z){\rm e}^{-\ii zt}
 \end{pmatrix},& z\in \mC_-\setminus (\Re E,\overline{E}],
\end{cases}
\end{equation*}
where $\Psi(t,0,z)$ satisfies the $t$-equation $\Psi_t=(-\ii z\sigma_3-H(t,0))\Psi$ and $\Psi(0,0,z)=I$, and $\Phi(t,z)={\rm e}^{-\ii\Re E t\sigma_3} M_0(z) {\rm e}^{-\ii w(z) t\sigma_3}$ (recall that $\Phi(t,\la)=\Phi_0(t,0,\la)$~\eqref{Phi_AKNS}, where $\Phi_0(t,x,\la)$ is the background solution of the AKNS system~\eqref{teq},~\eqref{xeq} for which ${\mathcal E}_{\rm bg}(t,0)={\mathcal E}_{\rm in}(t)$ is the input signal~\eqref{MBboundAe}, as shown in Section~\ref{BasicSolAKNS}) satisfies the $t$-equation $\Phi_t=(-\ii z\sigma_3-H(t,0))\Phi$ with $H(t,0)=H_0(t)=\frac{1}{2}
\left(\begin{smallmatrix}0&{\mathcal E}_{\rm in}(t)\\-\overline{{\mathcal E}_{\rm in}(t)}\end{smallmatrix}\right)$~\eqref{H_0}. Thus, using~\eqref{H} and~\eqref{m_lim}, we obtain that $\mathcal E(t,0)=-4\ii\lim_{z\to\infty}(z(M(t,0,z)-I))_{12}= 2(H_0(t))_{12}={\mathcal E}_{\rm in}(t)$ or $H(t,0)=H_0(t)$.

Since $F(t,x,\la)$ satisfies the equation~\eqref{HFeq2}, then just as in the proof of Theorem~\ref{ThRH0} we get~\eqref{F}, i.e.,
\begin{gather*}
F(t,x,\la)= - M(t,x,\la+\ii0) {\rm e}^{-\ii\la t\sigma_3}M^{-1}(0,x,\la+\ii0)\sigma_3 M(0,x,\la+\ii0)\\
\hphantom{F(t,x,\la)=}{}\times {\rm e}^{\ii\la t\sigma_3}M^{-1}(t,x,\la+\ii0), \qquad \la\in\R\setminus\{\Re E\}.
\end{gather*}
The function $F(t,x,\la)$ as well as $M(t,x,\la+\ii0)=M_+(t,x,\la)$ is continuous in $\la\in\R\setminus\{\Re E\}$ and bounded near $\la=\Re E$. Thus, we can appropriately define $M_+(t,x,\Re E)$ (see the proof of Theorem~\ref{ThRH0}) so that $F(t,x,\la)$ is continuous in $\la\in\R$.

The proof of the uniqueness of $\Phi(t,x,z)$ is similar to the proof of the uniqueness of $M(t,x,z)$ in Theorem~\ref{ThRH0}.
 \end{proof}

Theorems~\ref{Ex-Uniq},~\ref{smoothnessRH} and~\ref{solvMBbyM} give the following corollary.
\begin{Corollary}
The function $H(t,x)$~\eqref{H} and the electric field envelope ${\mathcal E}(t,x)$ can also be defined by
\begin{gather}
H(t,x)=-\ii \big[\sigma_3,m^{(2)}(t,x)\big], \nonumber\\
m^{(2)}(t,x)=\frac{1}{2\pi\ii} \int_\Upsilon\big[W^{(2)}(t,x,s)+I\big]\big[J^{(2)}(t,x,s)-I\big]\, \dd s,\label{m2}
\end{gather}
and
\begin{equation}\label{mathcalE_int}
{\mathcal E}(t,x)= \frac{2}{\pi} \int_\Upsilon\big(\big[W^{(2)}(t,x,s)+I\big]\big[I-J^{(2)}(t,x,s)\big]\big)_{12}\, \dd s,
\end{equation}
where $W^{(2)}(t,x,z)$ is a solution of a singular integral equation of the form~\eqref{IntEq2-RH0}, $J^{(2)}(t,x,z)$ has the form~\eqref{case2J2-dif} and~\eqref{case2J2-dif2} for $\Lambda<\infty$ and $\Lambda=\infty$ respectively, and $\Upsilon=L_1\cup L_3 \cup\overline{L_1}\cup\overline{L_3} \cup[\la_1,\la_2]\cup\big[E,\overline{E}\big]$ $($see Figure~$\ref{conturG2-LaFin})$ and $\Upsilon=L_2\cup\overline{L_2}$ $($see Figures~$\ref{conturG2-LaInf1}$ and~$\ref{conturG2-LaInf2})$ for $\Lambda<\infty$ and $\Lambda=\infty$ respectively.
 \end{Corollary}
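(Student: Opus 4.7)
The plan is to deduce the corollary directly from Theorem~\ref{solvMBbyM} by pushing the representations $H(t,x)=-\ii[\sigma_3,m(t,x)]$ and $\mathcal{E}(t,x)=-4\ii m_{12}(t,x)$ through the chain of transformations $M\mapsto M^{(1)}\mapsto M^{(2)}$ that was introduced in the proof of Theorem~\ref{smoothnessRH}, and then reading off $m^{(2)}$ from the Cauchy-type integral representation \eqref{sol2_RH0}. The key point is that both of the transforming factors $\delta^{\sigma_3}(z)$ (present only when $\Lambda<\infty$) and $G^{(2)}(t,x,z)$ tend to $I$ at infinity, so their effect on $m(t,x)=\lim_{z\to\infty}z(M(t,x,z)-I)$ is at most a diagonal additive shift, which is invisible both to the $(1,2)$-entry and to the commutator with $\sigma_3$.

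First I would record the large-$z$ asymptotics of the transforming factors. In all regions containing a neighborhood of $\infty$, $G^{(2)}(t,x,z)$ differs from $I$ only through factors of the form ${\rm e}^{\pm 2\ii\theta(t,x,z)}$, which decay super-polynomially by the sign analysis of $\Re(\ii\theta)$ in Appendix~\ref{appendixA} (used already in the proof of Theorem~\ref{smoothnessRH} for $t>x$, and trivially for $t\le x$). Hence $G^{(2)}(t,x,z)=I+O(z^{-\infty})$, and so $G^{(2)}$ contributes nothing to the $z^{-1}$ coefficient. For $\Lambda<\infty$, the scalar $\delta(z)$ solves a scalar RH problem whose jump is the identity at $\infty$, giving an expansion $\delta(z)=1+\delta_1 z^{-1}+O(z^{-2})$ with $\delta_1=-\frac{1}{2\pi\ii}\int_{(-\infty,\la_1]\cup[\la_2,+\infty)}\ln(1-r^2(s))\, \dd s$, so $\delta^{\sigma_3}(z)=I+\delta_1\sigma_3 z^{-1}+O(z^{-2})$. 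Writing $M^{(2)}=M\delta^{\sigma_3}G^{(2)}$ (respectively $M^{(2)}=MG^{(2)}$ if $\Lambda=\infty$) and taking the limit,
\[
m^{(2)}(t,x)=\lim_{z\to\infty}z\big(M^{(2)}(t,x,z)-I\big)=m(t,x)+\delta_1\sigma_3,
\]
with the $\delta_1\sigma_3$ term absent when $\Lambda=\infty$. Because $\sigma_3$ is diagonal, $\big(m^{(2)}\big)_{12}=m_{12}$, and because $[\sigma_3,\sigma_3]=0$, $[\sigma_3,m^{(2)}]=[\sigma_3,m]$. Substituting into \eqref{H} and \eqref{m_lim} yields $H(t,x)=-\ii[\sigma_3,m^{(2)}(t,x)]$ and $\mathcal{E}(t,x)=-4\ii\, (m^{(2)}(t,x))_{12}$.

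Finally I extract $m^{(2)}$ from \eqref{sol2_RH0}. Expanding the Cauchy kernel $\frac{1}{s-z}=-\frac{1}{z}-\frac{s}{z^2}-\dots$ and taking $z\to\infty$ nontangentially gives
\[
m^{(2)}(t,x)=-\frac{1}{2\pi\ii}\int_\Upsilon\big[W^{(2)}(t,x,s)+I\big]\big[I-J^{(2)}(t,x,s)\big]\, \dd s=\frac{1}{2\pi\ii}\int_\Upsilon\big[W^{(2)}(t,x,s)+I\big]\big[J^{(2)}(t,x,s)-I\big]\, \dd s,
\]
which is \eqref{m2}, and reading off its $(1,2)$-entry and multiplying by $-4\ii$ produces \eqref{mathcalE_int} (after simplifying $-4\ii\cdot\frac{1}{2\pi\ii}=\frac{-2}{\pi}$ combined with the sign flip $J^{(2)}-I \leftrightarrow I-J^{(2)}$). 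Convergence of these integrals is guaranteed by the exponential decay of $I-J^{(2)}$ on the unbounded components $L_1,L_3,\overline{L_1},\overline{L_3}$ (or $L_2,\overline{L_2}$) established already in the proof of Theorem~\ref{smoothnessRH}, together with $W^{(2)}\in L^2(\Upsilon)$ from Theorem~\ref{Ex-Uniq} applied to the equivalent RH problem for $M^{(2)}$.

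The only place that requires genuine care is verifying that $\delta^{\sigma_3}$ contributes only through a diagonal $z^{-1}$ correction: once that is in hand, the invariance of both the $(1,2)$-entry and the commutator $[\sigma_3,\cdot]$ under adding multiples of $\sigma_3$ makes the corollary essentially a re-reading of Theorem~\ref{solvMBbyM} in the variables $(M^{(2)},W^{(2)},J^{(2)})$ used for the proof of smoothness and for later asymptotic analysis.
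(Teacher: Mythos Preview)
Your argument is correct and follows essentially the same route as the paper: extract $m^{(2)}$ from the large-$z$ expansion of the Cauchy integral \eqref{sol2_RH0} and then transfer the formulas \eqref{H}, \eqref{m_lim} from $M$ to $M^{(2)}$ via the transformations of Theorem~\ref{smoothnessRH}. In fact you are slightly more careful than the paper on one point: the paper asserts $m(t,x)=m^{(2)}(t,x)$ outright, whereas you correctly observe that for $\Lambda<\infty$ the factor $\delta^{\sigma_3}(z)=I+\delta_1\sigma_3 z^{-1}+O(z^{-2})$ shifts $m^{(2)}$ from $m$ by the diagonal term $\delta_1\sigma_3$, and then note that this shift is annihilated both by $[\sigma_3,\cdot]$ and by taking the $(1,2)$-entry, so the stated formulas for $H$ and $\mathcal{E}$ are unaffected.
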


 \begin{proof}
Using the proof of Theorem~\ref{smoothnessRH}, in particular, the formula~\eqref{sol2_RH0}, we obtain that
\begin{gather*}
M^{(2)}(t,x,z)=I+\frac{m^{(2)}(t,x)}{z} +O\big(z^{-2}\big),\\
M^{(2)}_t(t,x,z)=\frac{m^{(2)}_t(t,x)}{z}+O\big(z^{-2}\big),\qquad z\to\infty,\quad z\in\mC\setminus\Upsilon,
\end{gather*}
where $m^{(2)}(t,x)$ has the form~\eqref{m2}. Since
\[
m(t,x)=\lim_{z\to\infty}z(M(t,x,z)-I)= \lim_{z\to\infty}z\big(M^{(2)}(t,x,z)-I\big)=m^{(2)}(t,x),
\]
 where
\[
M(t,x,z)=M^{(2)}(t,x,z)\big(G^{(2)}(t,x,z)\big)^{-1}\delta^{-\sigma_3}(z)
\]
and
\[
M(t,x,z)=M^{(2)}(t,x,z)\big(G^{(2)}(t,x,z)\big)^{-1}
\]
for $\Lambda<\infty$ and $\Lambda=\infty$ respectively (see the proof of Theorem~\ref{smoothnessRH}), then, using the formula~\eqref{H}, we obtain~\eqref{m2} and~\eqref{mathcalE_int}.
 \end{proof}

\appendix

\section{Appendix. Analysis of the phase function}\label{appendixA}

Consider the phase function $\theta$~\eqref{theta1}:
\begin{gather*}
\theta(t,x,z)=zt-\eta(z)x=
\tau\left(z-\xi\int_{-\Lambda}^\Lambda \frac{n(s)}{s-z}\, \dd s\right),\\
 \xi=\frac{x}{4\tau},\qquad \tau =t-x\ \left(x=\frac{4\xi}{1+4\xi} t\right) ,
\end{gather*}
where $t$ and $x$ as well as $\tau$ and $\xi$ are parameters. In what follows, we use the same notation $\theta$ for the function of the parameters $x$, $t$ or $\tau$, $\xi$.
We will study the function $\Re(\ii\theta(t,x,z))$ to obtain its signature table characterizing the behavior of the exponents ${\rm e}^{\pm 2\ii\theta(t,x,z)}$. It has the form
\begin{gather}
\Re(\ii\theta(t,x,z)) = \nu \left( x-t+\frac{x}{4}\int_{-\Lambda}^\Lambda \frac{n(s)\, \dd s}{|s-z|^2}\right) = \tau\nu (\xi \Pi(\la,\nu)-1) \nonumber \\
\hphantom{\Re(\ii\theta(t,x,z))}{} = \frac{x\nu}{4} \left( \Pi(\la,\nu)-\frac{1}{\xi}\right) , \label{Reitheta}
\end{gather}
where $z=\la+\ii\nu$ and
 \begin{equation}\label{Pi}
\Pi(\la,\nu):=\int_{-\Lambda}^\Lambda \frac{n(s)\, \dd s}{(s-\la)^2+\nu^2}.
 \end{equation}
The function $\theta(t,x,z)$ is analytic for $z\in \mC\setminus[-\Lambda,\Lambda]$ and therefore the functions $\Re(\ii\theta(t,x,z))=-\Im \theta(t,x,z)$ and $\Im(\ii\theta(t,x,z))=\Re \theta(t,x,z)$ are harmonic in $z\in\mC\setminus[-\Lambda,\Lambda]$. It has the continuous boundary values $\theta_\pm(t,x,\la)=\theta(t,x,\la\pm\ii 0)=\la t-\eta_\pm(\la)x$ on $[-\Lambda,\Lambda]$ (recall that the contour $[-\Lambda,\Lambda]$ is oriented from $-\Lambda$ to $\Lambda$), where $\eta_\pm(\la)$ are defined in~\eqref{eta_pm}, $z=\la+\ii\nu$, which can be represented in the form
\[
\theta_\pm(t,x,\la)= \tau\left(\la-\xi {\rm p.v.}\int_{-\Lambda}^\Lambda \frac{n(s)}{s-\la}\, \dd s\right)\mp \frac{\pi\ii}{4}n(\la)x,\qquad \la\in[-\Lambda,\Lambda].
\]

Since the equality $\Re(\ii\theta(t,x,z))=0$ is satisfied if\; $\nu=\Im z=0$\; or
 \begin{equation}\label{LevelLine}
\Pi(\la,\nu) =\frac{1}{\xi},
 \end{equation}
where $\Pi(\la,\nu)$ is defined by~\eqref{Pi}, then the curve defined by the equality $\Re(\ii\theta(t,x,z))=0$ divides the complex plane into two domains $\mC_\pm$ (i.e., this curve is $\nu=0$) when $\xi<0$ ($t<x$) or $t=x$, and into four domains whose common boundary is defined by the equalities $\nu=0$ and~\eqref{LevelLine} when $\xi>0$ ($t>x$).

Since
\begin{equation*}
\frac{\dd \theta}{\dd z}= \tau\left(1-\xi\int_{-\Lambda}^\Lambda \frac{n(s)\, \dd s}{(s-z)^2}\right)= \tau\left(1-\xi\int_{-\Lambda}^\Lambda \frac{n(s)\, \dd s}{(s-\la)^2-2\ii(s-\la)\nu-\nu^2}\right)=0
\end{equation*}
($z\in \mC\setminus[-\Lambda,\Lambda]$) if {\samepage
\begin{align*}
\frac{1}{\xi}& =\int_{-\Lambda}^\Lambda \frac{n(s)\, \dd s}{(s-z)^2}=
\int_{-\Lambda}^\Lambda \frac{n(s)(s-\overline{z})^2 \dd s}{|s-z|^4}\\
& =
\int_{-\Lambda}^\Lambda \frac{n(s)\big[(s-\la)^2-\nu^2\big]\, \dd s}{\big[(s-\la)^2+\nu^2\big]^2}+
2 \ii\nu \int_{-\Lambda}^\Lambda \frac{n(s)(s-\la)\, \dd s}{\big[(s-\la)^2+\nu^2\big]^2}=
I_1(\la,\nu) + 2 \ii \nu I_2(\la,\nu),
\end{align*}
then $\frac{\dd\theta}{\dd z}=0$ if $I_1(\la,\nu)=\frac{1}{\xi}$, and $\nu=0$ or $I_2(\la,\nu)=0$.}

First, consider the case when $\nu=0$ and $I_1(\la,\nu)=\frac{1}{\xi}$. In this case, the derivative $\dd \theta/ \dd z$ takes the form
\begin{equation}\label{dtheta_dz}
\frac{\dd \theta}{\dd z}=\tau \left(1-\xi\int_{-\Lambda}^\Lambda
\frac{n(s)\, \dd s}{(s-\la)^2}\right)=\tau \big(1-\xi I_1(\la,0)\big),\qquad z=\la\notin[-\Lambda,\Lambda],
\end{equation}
where $I_1(\la,0)=\Pi(\la,0)$. Hence (since the derivative~\eqref{dtheta_dz} does not exist for $\la\in[-\Lambda, \Lambda]$), for $\xi>0$ ($\tau>0$) the stationary points of the function $\ii\theta(t,x,z)=\ii\theta(\tau,\xi,z)$ are located on the real line outside the interval $[-\Lambda,\Lambda]$ and they are the points of the intersection of the real line $\nu=0$ and the level line~\eqref{LevelLine}. Since $I_1(\la,0)=\int_{-\Lambda}^\Lambda \frac{n(s)\, \dd s}{(s-\la)^2}>0$, then for $\xi<0$ ($\tau<0$) the derivative $\frac{\dd\theta}{\dd z}<0$ and hence there are no stationary points.

Now, consider the case when ${\nu\ne 0}$ and
\[
I_2(\la,\nu)=\int_{-\Lambda}^\Lambda \frac{n(s)(s-\la)\, \dd s}{\big[(s-\la)^2+\nu^2\big]^2}=0,\qquad
I_1(\la,\nu)=\int_{-\Lambda}^\Lambda \frac{n(s)\big[(s-\la)^2-\nu^2\big]\, \dd s}{\big[(s-\la)^2+\nu^2\big]^2}=\frac{1}{\xi}.
\]
In this case ${\xi>0}$. If $\la\ne 0$, ${\nu\ne 0}$,
then ${I_2(\la,\nu)\ne 0}$. If $\la=0$ and ${\nu\ne 0}$, then
\[
I_2(0,\nu)= \int_{-\Lambda}^\Lambda\frac{n(s)s}{\big[s^2+\nu^2\big]^2}\, \dd s = \left(\int_{-\Lambda}^0+\int_{0}^\Lambda\right) \frac{n(s)s}{\big[s^2+\nu^2\big]^2}\, \dd s=0
\]
 when $n(s)$ is an even function ($n(-s)=n(s)$), for example, when $n(\la)$ has the form~\eqref{Ex.n-const}.
Then the points $z=\ii\nu\ne 0$ will be stationary if
\begin{equation*}
{I_1(0,\nu)=\int_{-\Lambda}^\Lambda \frac{n(s)\big[s^2-\nu^2\big]\, \dd s}{\big[s^2+\nu^2\big]^2}=\frac{1}{\xi}}.
\end{equation*}
Obviously, $\int_{-\Lambda}^\Lambda \frac{n(s)s^2\dd s}{\big[s^2+\nu^2\big]^2}<\int_{-\Lambda}^\Lambda \frac{n(s)\nu^2\dd s}{\big[s^2+\nu^2\big]^2}$, i.e., $I_1(0,\nu)<0$, for $|\nu|>\Lambda$.
Since $n(s)$ is even,
\[
I_1(0,\nu)=2\int_{0}^\Lambda \frac{n(s)\big[s^2-\nu^2\big]\, \dd s}{\big[s^2+\nu^2\big]^2}.
\]
Consider the function $\chi(s):=\frac{s^2}{[s^2+\nu^2]^2}$. Since $\chi'(s)=\frac{2s(\nu^2-s^2)}{[s^2+\nu^2]^3}=0$ for $s=0$ and $s^2=\nu^2$, and $\chi''(s)=\frac{2\nu^4-16\nu^2s^2+6s^4}{[s^2+\nu^2]^4}$ is positive for $s=0$ and negative for $s^2=\nu^2$ ($\nu\ne 0$), then $\chi(s)$ attains a strict maximum at $s=\pm\nu$. Consequently, $\frac{n(s)s^2}{[s^2+\nu^2]^2}< \frac{n(s)\nu^2}{[s^2+\nu^2]^2}$, $s^2\ne\nu^2$ ($\nu\ne 0$, $|\nu|\le \Lambda$), and hence $\int_{0}^\Lambda \frac{n(s)s^2\dd s}{[s^2+\nu^2]^2}< \int_{0}^\Lambda \frac{n(s)\nu^2\dd s}{[s^2+\nu^2]^2}$, i.e., $I_1(0,\nu)<0$, for $|\nu|\le \Lambda$, $\nu\ne 0$. In particular, for $n(\la)$ of the form~\eqref{Ex.n-const} $I_1(0,\nu)=-\frac{1}{\Lambda^2+\nu^2}<0$ for all $\nu$.
Thus, in the present case the function $\ii\theta(t,x,z)$ does not have stationary points.

Since $n(\la)\ge 0$ and $\int_{-\infty}^\infty n(\lambda)\, \dd\lambda=1$, then
\[
 0<\Pi(\la,\nu)\le\sup_{s\in [-\Lambda,\Lambda]} \frac{1}{(s-\la)^2+\nu^2}\int_{-\Lambda}^\Lambda n(s)\, \dd s = \frac{1}{\inf\limits_{s\in [-\Lambda,\Lambda]}(s-\la)^2+\nu^2}.
\]
Thus,
\[
\frac{1}{\xi}= \Pi(\la,\nu)\le \begin{cases}
 \displaystyle \frac{1}{\nu^2}, & |\la|\le \Lambda, \\ \displaystyle\frac{1}{(\Lambda-|\la|)^2+\nu^2}, & |\la|> \Lambda. \end{cases}
\]
Hence, $|\nu|\le \sqrt{\xi}$ for $\la\in \R$ and $(|\la|-\Lambda)^2+\nu^2\le \xi$ when $\la\in (-\infty,-\Lambda)\cup(\Lambda,\infty)$. Since $\Pi(\la,\nu)=\Pi(\la,-\nu)$ for any $\nu$, the function $\Pi(\la,\nu)$ and consequently the level line~\eqref{LevelLine} are symmetric with respect to the real axis $\nu=0$. If $n(-\la)=n(\la)$ for all $\la$ ($n(\la)$ is an even function), then $\Pi(\la,\nu)$ and the level line~\eqref{LevelLine} are also symmetric with respect to the imaginary axis $\la=0$.
Moreover,
\[
 \frac{1}{\xi}=\Pi(\la,\nu)\ge\inf_{s\in [-\Lambda,\Lambda]} \frac{1}{(s-\la)^2+\nu^2} = \frac{1}{(\Lambda+|\la|)^2+\nu^2}\ge \frac{1}{2\big(\Lambda^2+\la^2\big)+\nu^2},
\]
and, therefore, $\xi\le 2\big(\Lambda^2+\la^2\big)+\nu^2$.

Note that
$\lim_{\nu\to0}\Pi(\la,\nu)= \lim_{\nu\to0}\int_{-\Lambda}^\Lambda \frac{n(s)\, \dd s}{(s-\la)^2+\nu^2}= +\infty$ for $\la \in [-\Lambda,\Lambda]$. Taking into account the signatures of the derivatives
\[
\frac{\partial\Pi}{\partial\la}= 2\int_{-\Lambda}^\Lambda
\frac{n(s)(s-\la)\, \dd s}{\big[(s-\la)^2+\nu^2\big]^2},\qquad \frac{\partial\Pi}{\partial\nu}=-2\nu \int_{-\Lambda}^\Lambda
\frac{n(s)\, \dd s}{\big[(s-\la)^2+\nu^2\big]^2},
\]
we conclude that when $\Lambda <\infty$, for each $\xi\in\R_+$ there exists the smooth and closed level line~\eqref{LevelLine}, which is symmetric with respect to the real axis $\nu=0$, and we denote this curve (level line) by $\gamma_\xi$ (see Figure~\ref{ReiTheta1}). Since the derivative $\frac{\partial\widetilde{\Pi}}{\partial\la}=\frac{\partial\Pi}{\partial\la}$, where $\widetilde{\Pi}(\la,\nu,\xi)=\Pi(\la,\nu)-\frac{1}{\xi}$,\; $\xi\in\R_+$, exists and is not equal to zero for $\nu\in \R$ and $|\la|>\Lambda$, then in some neighborhoods of the stationary points (recall that they are the points of the intersection of the real line $\nu=0$ and the level line~\eqref{LevelLine}) the level line can be determined by the smooth function $\la=\la_-(\nu,\xi)$ for $\la<-\Lambda$ and $\la=\la_+(\nu,\xi)$ for $\la>\Lambda$. We denote the stationary points by $\la_\pm=\la_\pm(\xi)$ ($\la_-<-\Lambda$ and $\la_+>\Lambda$). At a large $\xi$, the level line~\eqref{LevelLine} goes to the vicinity of the infinity in the $z$-plane, and at a small $\xi$, it envelops the segment $[-\Lambda, \Lambda]$ ($|\nu|\to +0$ and $|\la|\to \Lambda+0$ for $\xi\to +0$). As $\Lambda\to+\infty$ the stationary points tend to $\pm\infty$.
When $\Lambda=+\infty$ the real line $\nu=0$ becomes an asymptote for the level line~\eqref{LevelLine} ($\la_\pm(\nu,\xi)\to \pm\infty$ as $\nu\to 0$), which consists of the two curves~$\gamma^\pm_\xi$ symmetric with respect to the real axis (see Figure~\ref{ReiTheta2}).
As $\nu\to\pm 0$ the curves $\gamma^\pm_\xi$ tend to the real line from $\mC_\pm$ respectively.

Taking into account~\eqref{Reitheta}, we obtain that
\[
\sgn[\Re(\ii\theta(t,x,z))]=\sgn\Im z
\]
for $0\le t<x$, i.e., $\xi<0$, and for $t=x$, $t,x>0$, and $\Re(\ii\theta(0,0,z))\equiv 0$. This case ($t\le x$, $t,x\ge 0$) is studied in Theorem~\ref{causality-region}.

As shown above, in the case $\tau=t-x>0$ and, accordingly, $\xi>0$, the curve defined by the equality $\Re(\ii\theta(t,x,z))=0$, where $z=\la+\ii\nu$ and $\Lambda <\infty$, consists of the real line $\R$ and the closed level line $\gamma_\xi$ defined by the equality $\Pi(\la,\nu)=1/\xi$ (the level line~\eqref{LevelLine}). For any fixed $\xi>0$, the interval $[-\Lambda,\Lambda]$ is located inside the oval $\gamma_\xi$, and $\gamma_\xi$ intersects the real line at the stationary points $\la_\pm=\la_\pm(\xi)$ ($\la_-<-\Lambda$ and $\la_+>\Lambda$) which are simple ones because $\frac{\partial^2\theta}{\partial\la^2}= -2\xi \int_{-\Lambda}^\Lambda \frac{n(s)\, \dd s}{(s-\la)^3}$ is negative (i.e., strictly negative) for $\la<-\Lambda$ and positive for $\la>\Lambda$. When $\Lambda\to+\infty$ ($\xi>0$ is fixed) the stationary points $\la_\pm$ tend to $\pm\infty$. For $\tau>0$ ($\xi>0$) and $\Lambda=+\infty$, the real line $\nu=0$ is an asymptote for the level line~\eqref{LevelLine} consisting of the two curves~$\gamma^\pm_\xi$, symmetric with respect to the real line.

It is easy to verify that for $t>x$ ($\xi>0$) the signature table of $\Re(\ii\theta(t,x,z))$~\eqref{Reitheta} has the form presented in Figures~\ref{ReiTheta1} and~\ref{ReiTheta2}. The signature table shows that the exponents ${\rm e}^{-2\ii\theta(t,x,z)}$ and ${\rm e}^{2\ii\theta(t,x,z)}$ vanish exponentially on the subintervals $(E,\Re E)$ and $\big(\Re E,\overline{E}\big)$ respectively when $z$ and the interval $\big(E,\overline{E}\big)$ lie inside of $\gamma_\xi$ (when $\Lambda<\infty$) or between the curves $\gamma^\pm_\xi$ (when $\Lambda=\infty$).
They increase in $\tau$ unboundedly on the subintervals $(E,\Re E)$ and $\big(\Re E,\overline{E}\big)$ respectively if $z$ and the interval $\big(E,\overline{E}\big)$ lie outside of $\gamma_\xi$ (when $\Lambda<\infty$) or on those parts of the subintervals $(E,\Re E)$ and $\big(\Re E,\overline{E}\big)$ respectively that lie above the curve $\gamma^+_\xi$ and below the curve $\gamma^-_\xi$ if $z$ also lies in the corresponding regions (when $\Lambda=\infty$).
\begin{figure}[t] \centering
 \begin{tikzpicture}[smooth,scale=1,thick,font=\small]
\draw[black] (-4.5,0) -- (4.5,0);
\draw[color=black, fill=black](-2.25,0) circle(0.05);
\draw[color=black, fill=black](2.25,0) circle(0.05);
\draw[color=black, fill=black](-3.27,0) circle(0.05);
\draw[color=black, fill=black](3.27,0) circle(0.05);
\draw[very thick, black] (0,0) ellipse [x radius=93pt, y radius=48.5pt];
\node[black] at (4.3,0.2) {$\mathbb{R}$};
\node[black] at (0,0.8) {$+$};
\node[black] at (0,2) {$-$};
\node[black] at (0,-0.8) {$-$};
\node[black] at (0,-2) {$+$};
\node[black] at (-2.27,-0.3) {$-\Lambda$};
\node[black] at (2.25,-0.3) {$\Lambda$};
\node[black] at (-3.85,-0.3) {$\lambda_{-}(\xi)$};
\node[black] at (3.85,-0.3) {$\lambda_{+}(\xi)$};
\node[black] at (1.25,1.78) {$\gamma_{\xi}$};
 \end{tikzpicture}
 \caption{The curve $\R\cup\gamma_\xi$ defined by $\Re(\ii\theta(t,x,z))=0$ and the signature table of $\Re (\ii \theta)$ when $t>x$ ($\xi>0$) and $\Lambda<\infty$.}\label{ReiTheta1}
\end{figure}
\begin{figure}[t]\centering
 \begin{tikzpicture}[smooth,scale=1,thick,font=\small]
\draw[black] (-4.5,0) -- (4.5,0);
\draw[black,very thick] (-4.5,0.15) .. controls (-1.3,1.45) and (1.3,1.45).. (4.5,0.15);
\draw[black,very thick] (-4.5,-0.15) .. controls (-1.3,-1.45) and (1.3,-1.45).. (4.5,-0.15);
\node[black] at (2.3,0.25) {$\mathbb{R}$};
\node[black] at (0,0.6) {$+$};
\node[black] at (0,1.45) {$-$};
\node[black] at (0,-0.6) {$-$};
\node[black] at (0,-1.45) {$+$};
\node[black] at (1.5,1.3) {$\gamma_{\xi}^{+}$};
\node[black] at (1.5,-1.3) {$\gamma_{\xi}^{-}$};
 \end{tikzpicture}
 \caption{The curve $\R\cup \gamma^+_\xi\cup \gamma^-_\xi$ defined by $\Re(\ii\theta(t,x,z))=0$ and the signature table of $\Re (\ii \theta)$ when $t>x$ ($\xi>0$) and $\Lambda=\infty$.}\label{ReiTheta2}
\end{figure}

\subsection*{Acknowledgements}

The author would like to thank Vladimir Kotlyarov (B.~Verkin ILTPE of NAS of Ukraine) for useful discussions and suggestions. Also, the author would like to thank the anonymous referees for careful reading of the manuscript and their comments.
This work was partially supported by the National Academy of Sciences of Ukraine (project 0121U111968 ``Nonstandard nonlocal and peakon integrable equations: asymptotics and inverse scattering transform'') and the Alexander von Humboldt Foundation (the host institution: Friedrich-Alexander University of Erlangen-Nuremberg, Chair for Dynamics, Control, Machine Learning and Numerics).

\pdfbookmark[1]{References}{ref}
\LastPageEnding

\end{document}